\numberwithin{equation}{section}
\theoremstyle{plain}
\newtheorem{theorem}    {Theorem}[section]
\newtheorem{lemma}      [theorem] {Lemma}
\newtheorem{proposition}[theorem] {Proposition}
\newtheorem{corollary}  [theorem] {Corollary}
\theoremstyle{definition}
\newtheorem{definition} [theorem] {Definition}
\newtheorem{claim}      [theorem] {Claim}
\newtheorem{remark}    [theorem]{Remark}
\newtheorem*{rep@theorem}{\rep@title}
\newcommand{\newreptheorem}[2]{%
	\newenvironment{rep#1}[1]{%
		\def\rep@title{#2 \ref{##1}}%
		\begin{rep@theorem}}%
		{\end{rep@theorem}}}
\renewcommand{\le}{\leqslant}
\renewcommand{\leq}{\leqslant}
\renewcommand{\ge}{\geqslant}
\renewcommand{\geq}{\geqslant}
\let\oldexists\exists
\let\exists\relax
\DeclareMathOperator{\exists}{\oldexists}
\let\oldforall\forall
\let\forall\relax
\DeclareMathOperator{\forall}{\oldforall}
\DeclarePairedDelimiter{\set}{\lbrace}{\rbrace}
\newcommand{\mod}[1]{\ (\mathrm{mod}\ #1)}
\DeclarePairedDelimiterX{\abs}[1]
{\lvert}{\rvert}{\ifblank{#1}{\,\cdot\,}{#1}}
\DeclarePairedDelimiterX{\norm}[1]
{\lVert}{\rVert}{\ifblank{#1}{\,\cdot\,}{#1}}
\DeclarePairedDelimiterX{\inner}[2]
{\langle}{\rangle}{\ifblank{#1}{\,\cdot\,}{#1},\ifblank{#2}{\,\cdot\,}{#2}}
\DeclareMathOperator{\Bin}{Bin}
\DeclareMathDelimiter{\given}
{\mathbin}{symbols}{"6A}{largesymbols}{"0C}
\DeclareMathOperator{\Prob}{\mathbb{P}}
\DeclarePairedDelimiterXPP{\prob}[1]
{\Prob}{\lparen}{\rparen}{}
{\renewcommand{\given}{\nonscript\;\delimsize\vert\nonscript\;\mathopen{}}
	\ifblank{#1}{\,\cdot\,}{#1}}
\DeclareMathOperator{\Expec}{\mathbb{E}}
\DeclarePairedDelimiterXPP{\expec}[1]
{\Expec}{\lparen}{\rparen}{}
{\renewcommand{\given}{\nonscript\;\delimsize\vert\nonscript\;\mathopen{}}
	\ifblank{#1}{\,\cdot\,}{#1}}
\DeclareMathOperator{\Var}{Var}
\DeclarePairedDelimiterXPP{\var}[1]
{\Var}{\lparen}{\rparen}{}
{\renewcommand{\given}{\nonscript\;\delimsize\vert\nonscript\;\mathopen{}}
	\ifblank{#1}{\,\cdot\,}{#1}}
\DeclareMathOperator{\Cov}{Cov}
\DeclarePairedDelimiterXPP{\cov}[2]
{\Cov}{\lparen}{\rparen}{}{#1,#2}
\newcommand{\EE}{\mathbb{E}}
\newcommand{\PP}{\mathbb{P}}
\newcommand{\cA}{\mathcal{A}}
\newcommand{\cE}{\mathcal{E}}
\newcommand{\cF}{\mathcal{F}}
\newcommand{\cM}{\mathcal{M}}
\newcommand{\cP}{\mathcal{P}}
\DeclareMathOperator{\END}{END}
\DeclareMathOperator{\e}{end}
\DeclareMathOperator{\m}{mid}
\DeclareMathOperator{\Des}{Des}
\newcommand{\IN}[2]{\text{IN}\left(#1; #2 \right)}
\newcommand{\OUT}[2]{\text{OUT}\left(#1; #2 \right)}
\title{Minimum degree edge-disjoint Hamilton cycles in random directed graphs}
\author{Asaf Ferber\thanks{Department of Mathematics, University of California, Irvine. Email: \href{mailto:asaff@uci.edu} {\nolinkurl{asaff@uci.edu}}. Research supported in part by NSF Awards DMS-1954395 and DMS-1953799.} \hspace{10mm} Adva Mond\thanks{Department of Mathematics, King's College London, London, United Kingdom. Email: \href{mailto:adva.mond@kcl.ac.uk}{\nolinkurl{adva.mond@kcl.ac.uk}}}}
\date{}
\begin{document}
	
	\maketitle
	
	\begin{abstract}
		In this paper we consider the problem of finding ``as many edge-disjoint Hamilton cycles as possible'' in the binomial random digraph $D_{n,p}$.
		We show that a typical $D_{n,p}$ contains precisely the minimum between the minimum out- and in-degrees many edge-disjoint Hamilton cycles, given that $p\geq \log^{15} n/n$, which is optimal up to a factor of poly$\log n$. 
		
		Our proof provides a randomized algorithm to generate the cycles and uses a novel idea of generating $D_{n,p}$ in a sophisticated way that enables us to control some key properties, and on an ``online sprinkling'' idea as was introduced by Ferber and Vu.
	\end{abstract}
	
	
	\section{Introduction}
	\emph{At most how many edge-disjoint Hamilton cycles does a given graph/directed graph contain?}\\ 
	Recall that a \emph{Hamilton cycle} in a directed graph (digraph) is a directed cycle passing through all of its vertices.
	Even though a Hamilton cycle is a relatively simple structure, determining whether a certain graph is \emph{Hamiltonian} (that is, contains a Hamilton cycle) was included in the list of $21$ $\mathcal{NP}$-hard problems by Karp~\cite{karp1972reducibility}. Therefore, it is interesting to find general sufficient conditions for the existence of Hamilton cycles in (di)graphs under some mild assumptions.
	
	Let $\psi(G)$ denote the maximum number of edge-disjoint Hamilton cycles in a given (di)graph $G$. The problem of estimating $\psi(G)$ for (di)graphs with large minimum degree has a long and rich history and we briefly sketch some notable results under four settings: $(1)$ dense graphs, $(2)$ dense digraphs, $(3)$ random graphs, and $(4)$ random digraphs.

	\paragraph{Dense graphs.} 
	Since a Hamilton cycle is $2$-\emph{regular} (that is, every vertex has degree exactly $2$), we obtain the trivial upper bound $\psi(G)\leq \lfloor \delta(G)/2\rfloor$, where $\delta(G)$ is the minimum degree of $G$. A classical theorem by Dirac~\cite{Dirac} asserts that a graph $G$ on $n \ge 3$ vertices\footnote{Unless stated otherwise, $n$ always stands for the number of vertices in a (di)graph.} and of minimum degree at least $n/2$ is Hamiltonian, and in particular, $\psi(G)\geq 1$. 
	
	In the 1970s, Nash-Williams~\cite{nashwilliams} showed that if a graph $G$ satisfies the Dirac condition (that is, $\delta(G)\geq n/2$), then it already contains at least $\left\lfloor \frac{5}{224}n \right\rfloor$ edge-disjoint Hamilton cycles.
	Ferber, Krivelevich, and Sudakov~\cite{ferber2017counting} proved that if $\delta(G)\geq (1/2+o(1))n$ then $G$ contains at least $(1-o(1))\text{reg}_{\text{even}}(G)/2$ edge-disjoint Hamilton cycles, where $\text{reg}_{\text{even}}(G)$ is the largest \emph{even} number $d$ such that $G$ contains a spanning $d$-regular subgraph.
	Csaba, K\"{u}hn, Lo, Osthus, and Treglown~\cite{1factorization} proved that a graph $G$ of minimum degree $\delta \ge n/2$, where $n$ is sufficiently large, contains $\text{reg}_{\text{even}}(n, \delta)/2$ edge-disjoint Hamilton cycles, where $\text{reg}_{\text{even}}(n,\delta)$ is the minimum $\text{reg}_{\text{even}}(G')$ over all $n$-vertex $G'$ with $\delta(G') = \delta$. To show that an $n$-vertex graph $G$ with $\delta(G)\geq n/2$ contains $\text{reg}_{\text{even}}(G)/2$ edge-disjoint Hamilton cycles is still open (and was conjectured in \cite{kuhn2013hamilton}).
	It is also worth mentioning that in ~\cite{1factorization} Csaba, K\"{u}hn, Lo, Osthus, and Treglown proved that every $d$-regular graph, where $d\ge n/2$ and $n$ is sufficiently large, contains $d/2$ many edge-disjoint Hamilton cycles.

	\paragraph{Dense digraphs.}
	
	Since in a directed Hamilton cycle every vertex has in/out-degrees exactly $1$, we obtain the trivial upper bound $\psi(D)\leq \delta^{\pm}(D)$, where $\delta^{\pm}(D)$ is the minimum between the minimum in-degree and the minimum out-degree of $D$, denoted by $\delta^-(D)$ and by $\delta^+(D)$, respectively. A digraph analogue for Dirac's theorem was proved by Ghouila-Houri~\cite{ghouilahouri}, who showed that a digraph $D$ with $\delta^{\pm}(D)\geq n/2$ is Hamiltonian, and hence, $\psi(D)\geq 1$.
	
	In the late $60$'s, Kelly conjectured (see the excellent surveys \cites{kuhn2012survey,kuhn2014hamilton} and the references therein) that every regular \emph{tournament} (where a tournament is an orientation of the complete graph) contains $(n-1)/2$ edge-disjoint Hamilton cycles, which is clearly best possible.
	Kelly’s Conjecture attracted a lot of attention in the past few decades, until it was settled for large tournaments in a remarkable (and over 90 pages long) paper by K\"{u}hn and Osthus \cite{kuhn2013hamilton}.
	
	A tournament is a special case of a more general set of directed graphs, so called \emph{oriented graphs}, which are obtained by orienting the edges of a simple graph.
	After a long line of research, Keevash, K\"uhn and Osthus~\cite{keevash2009exact} proved that $\delta^{\pm}(G) \ge \left\lceil \frac{3n-4}{8} \right\rceil$ implying the existence of a Hamilton cycle. A construction showing that this is tight was obtained much earlier by H\"aggkvist~\cite{haggkvist1993hamilton}.
	In~\cite{kuhn2014applications}, among other interesting corollaries to the methods developed in~\cite{kuhn2013hamilton}, K\"uhn and Osthus showed that a $d$-regular digraph $D$, where $d \ge (1/2+o(1))n$ and $n$ is sufficiently large, satisfies $\psi(D) = d$.
	
	\paragraph*{Random graphs.}
	Let $G_{n,p}$ be the binomial random graph on vertex-set $[n]:=\{1,\ldots,n\}$, obtained by adding each unordered pair $xy\in \binom{[n]}{2}$ as an edge with probability $p$, independently at random. A cornerstone result in random graphs was obtained by P\'{o}sa~\cite{posa} in 1976, who introduced the so-called rotation-extension technique,  and proved that a typical $G_{n,p}$ is Hamiltonian for $p = \Theta\left(\frac{\log n}{n} \right)$. Based on P\'osa's technique, Bollob\'{a}s~\cite{bollobas1984evolution}, and independently, Koml\'{o}s and Szemer\'{e}di~\cite{komlos1983limit}, determined the sharp threshold (for Hamiltonicity) to be $p = \frac{\log n + \log \log n + \omega(1)}{n}$.
	\footnote{This is indeed optimal, as for $p = \frac{\log n +\log \log n - \omega(1)}{n}$ we have that with high probability $\delta\left(G_{n,p} \right) \le 1$, and therefore the graph is not Hamiltonian.}
	
	Later on, Bollob\'{a}s and Frieze~\cite{bollobas1983matchings} extended the above and proved that for $p=\frac{\log n+O(\log\log n)-\omega(1)}{n}$ (for this range of $p$, a typical $G_{n,p}$ has minimum degree $O(1)$) we typically have $\psi\left(G_{n,p} \right) = \lfloor \delta\left(G_{n,p} \right)/2\rfloor$.
	For larger values of $p$, combining all the results of Frieze and Krivelevich~\cites{frieze2005packing,frieze2008two}, Knox, K\"{u}hn, and Osthus~\cites{knox2012approximate,knox2015edge}, Ben-Shimon, Krivelevich, and Sudakov~\cite{ben2011resilience}, Krivelevich and Samotij~\cite{krivelevich2012optimal}, and K\"{u}hn and Osthus~\cite{kuhn2014applications} we conclude that 
	typically $\psi\left(G_{n,p} \right) = \left\lfloor \delta\left(G_{n,p} \right)/2 \right\rfloor$ for all values of $p$. 
	
	It is worth mentioning that all the above mentioned work was heavily based on the rotation-extension technique, which unfortunately does not apply (at least as is) when working with digraphs.
	
	\paragraph*{Random digraphs}
	Let $D_{n,p}$ denote the binomial random digraph on vertex-set $[n]$, obtained by adding each ordered pair $xy\in [n]^2$ as an edge with probability $p$, independently at random (self loops are not allowed). The question of Hamiltonicity in the random digraph $D_{n,p}$ was firstly addressed by McDiarmid~\cite{mcdiarmid1980clutter}, who proved, based on an elegant coupling argument, that 
	$$\Pr\left[G_{n,p} \text{ is Hamiltonian} \right] \le \Pr\left[D_{n,p} \text{ is Hamiltonian} \right].$$
	Combining this with  \cites{bollobas1984evolution,komlos1983limit} we obtain that $D_{n,p}$ is typically Hamiltonian for $p \ge \frac{\log n + \log \log n + \omega(1)}{n}$. Frieze~\cite{frieze1988algorithm} later determined the correct Hamiltonicity threshold for $D_{n,p}$ to be $p = \frac{\log n + \omega(1)}{n}$. \footnote{Also here, this is optimal, as for $p = \frac{\log n - \omega(1)}{n}$ we typically have $\delta^{\pm}\left(D_{n,p} \right) = 0$, and therefore the graph is not Hamiltonian.} Moreover, in the same paper Frieze proved that for $p=\frac{\log n +O(\log\log  n)-\omega(1)}{n}$ we typically have $\psi(D_{n,p})=\delta^{\pm}(D_{n,p})$.
	Ferber, Kronenberg, and Long~\cite{ferber2017packing} proved an approximate result by showing that for $p=\omega\left(\frac{\log^4n}{n}\right)$ we typically have $\psi\left(D_{n,p} \right) = (1-o(1))\delta^{\pm}\left(D_{n,p} \right)$. For $p=\Theta(1)$ it follows from the methods in~\cites{kuhn2013hamilton}  that typically $\psi\left(D_{n,p} \right) = \delta^{\pm}\left(D_{n,p} \right)$ which is best possible. 
	
	In this paper we address this problem for all values $p\geq \frac{\log^{15}n}{n}$. 
	\begin{theorem}
		\label{thm:main3}
		For $p\geq \frac{\log^{15} n}{n}$ with high probability we have 
		\[\psi\left(D_{n,p} \right) = \delta^{\pm}\left(D_{n,p} \right).\]
	\end{theorem}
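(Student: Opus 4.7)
The upper bound $\psi(D_{n,p}) \leq \delta^{\pm}(D_{n,p})$ holds for every digraph because each vertex of a Hamilton cycle uses exactly one in- and one out-edge, so the task is to exhibit, with high probability, $k := \delta^{\pm}(D_{n,p})$ edge-disjoint Hamilton cycles. Standard concentration gives $k = np - \Theta(\sqrt{np \log n})$ typically, which for $p \geq \log^{15} n/n$ is only a polylogarithmic factor below $np$. My plan is to split the randomness of $D_{n,p}$ into two independent copies $D_0 \sim D_{n,p_0}$ and $D_1 \sim D_{n,p_1}$ with $(1-p_0)(1-p_1) = 1-p$, taking $p_0 = (1-\eta)p$ for a small constant $\eta$ and $p_1 = \eta p$, and to use $D_0$ to pack all but $t$ of the Hamilton cycles for some $t = (\log n)^{O(1)}$ while keeping $D_1$ as a reserve for the finishing step.

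The bulk step would adapt the approximate packing of Ferber, Kronenberg and Long \cite{ferber2017packing}: with high probability one finds inside $D_0$ a collection of $k-t$ edge-disjoint Hamilton cycles $H_1,\ldots,H_{k-t}$. The extra ingredient I need is that the residual digraph $D^{\ast} := D_{n,p} \setminus (H_1 \cup \ldots \cup H_{k-t})$ has a tightly controlled structure, namely every vertex $v$ has in- and out-degree exactly $t$ in $D^{\ast}$, and the surviving edges at \emph{critical} vertices (those with $d^{\pm}_{D_{n,p}}(v) = k$) are precisely the ones that will go into the final $t$ cycles. To arrange this, rather than revealing $D_0$ all at once I would first sample the full bi-degree sequence of $D_{n,p}$ and then, conditional on the degrees, reveal edges one by one as the packing algorithm demands. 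This is the ``sophisticated generation'' alluded to in the abstract: it lets us spend, at each vertex, an appropriate number of edges in the bulk step while saving the correct number for the finishing step.

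The heart of the argument is producing the final $t$ Hamilton cycles inside $D^{\ast}$, and for this I would run an online-sprinkling procedure in the spirit of Ferber and Vu. Build the remaining cycles one at a time by growing a Hamilton path from a seed vertex; at each extension step the algorithm either follows an already-revealed surviving edge of $D^{\ast}$ (which is forced whenever the current endpoint is critical) or reveals a fresh edge of $D_1$ on demand. Because $D_1$ was never touched during the bulk step, every probe carries its full $D_{n,p_1}$ marginal and is independent of the past, so pseudo-random expansion estimates for $D_{n,p_1}$, combined with P\'osa-style rotations at non-critical vertices, should yield Hamilton paths that close up into edge-disjoint Hamilton cycles while respecting the bookkeeping inherited from the bulk step.

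\textbf{Main obstacle.} The crux is the handling of critical vertices, at which every surviving edge is already committed and the algorithm has no room to backtrack: a wrong choice at a single critical vertex can cascade through all $t$ remaining cycles and destroy edge-disjointness. The point of the sophisticated generation is that it fixes, before any edge of $D^{\ast}$ is spent, exactly how many surviving in- and out-edges each vertex has and to which cycle each of them is destined; the reserved sprinkling $D_1$ then supplies the independent randomness needed to connect paths through these bottleneck vertices without disturbing the degree accounting, and I expect the bulk of the technical work to go into proving that the two mechanisms interact cleanly across all $t$ finishing rounds.
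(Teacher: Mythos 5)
Your proposal takes a genuinely different route from the paper, and it has gaps that are fatal as stated. The most concrete one is quantitative: you reserve $p_1=\eta p$ for a constant $\eta$, so $D_0\sim D_{n,(1-\eta)p}$ has $\delta^{\pm}(D_0)=(1-\eta)np-\Theta(\sqrt{np\log n})$, which falls short of $k=\delta^{\pm}(D_{n,p})=np-\Theta(\sqrt{np\log n})$ by $\Theta(np)$. Since $\psi(D_0)\le\delta^{\pm}(D_0)$, the bulk graph cannot contain $k-t$ edge-disjoint Hamilton cycles for any $t=(\log n)^{O(1)}$; the defect is forced to be linear in $np$. (The paper's reserve is $p_1=\sqrt{p/(n\log^4 n)}=o(p)$, far smaller.) Even after repairing this, the Ferber--Kronenberg--Long theorem only yields $(1-o(1))\delta^{\pm}$ cycles with an uncontrolled $o(np)$ defect, not a polylogarithmic one, so the bulk step you invoke is not available off the shelf. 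Finally, your finishing step requires threading $t$ Hamilton cycles through the critical (minimum-degree) vertices using only prescribed, already-committed edges; this is precisely the hard part of the whole problem, and your proposal defers it rather than resolving it. Conditioning on the full bi-degree sequence also changes the model to a random digraph with given degrees, for which neither the approximate packing nor the sprinkling estimates are stated.

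The paper sidesteps both difficulties by never packing Hamilton cycles approximately and then topping up. Instead, Phase~1 extracts in one shot, via the Gale--Ryser theorem applied to an auxiliary random bipartite graph, a $\delta$-regular spanning subdigraph decomposed into $\delta=\delta^{\pm}(D_{n,p})$ edge-disjoint $1$-factors; this automatically consumes every relevant in/out-edge at the (whp unique) minimum-degree vertex, so the critical-vertex accounting is settled before any Hamilton cycle is built. Choosing the bipartite-to-digraph permutation uniformly at random guarantees each $1$-factor has only $O(\log n)$ cycles, and the small reserved sprinkling is used only to merge the cycles of each $1$-factor by double rotations whose new edges, by construction, avoid the critical vertices entirely. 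To salvage your plan you would need both a packing theorem with polylogarithmic defect and a Hamilton-cycle-through-prescribed-edges argument at bottleneck vertices, each of which appears at least as hard as the paper's route.
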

	As we have already mentioned, the case where $p = \Theta(1)$ follows from methods in~\cite{kuhn2013hamilton}, so it is sufficient to prove \Cref{thm:main3} for $\frac{\log^{15} n}{n} \le p \le \varepsilon$, for some specific constant $\varepsilon > 0$.

	Our proof provides a randomized algorithm that with high probably outputs $\delta^{\pm}(D_{n,p})$ edge-disjoint Hamilton cycles. Our argument is based on an approach introduced in~\cite{ferber2018counting} to sophisticatedly generate $D_{n,p}$ in a way that enables us to make use of partial randomness along the process, and on an ``online sprinkling'' idea that was introduced in \cite{OnlineSprinkling}. The exact details are given below.

	\subsection{Notation}
	Given a digraph $D$ and two subsets of vertices $A,B \subset V(D)$ we denote by $E(A,B)$ the set of directed edges from vertices in $A$ to vertices $B$, and we let $e(A,B) = \big|E(A,B) \big|$.
	If either $A$ or $B$ consists of a single vertex $v$, we simply write $E(v,B)$ or $E(A,v)$.
	For a directed path or cycle $P$ and a vertex $u \in P$, we denote by $u^P_+$ and by $u^P_-$ its successor and predecessor in $P$, respectively.
	For a subset of vertices $U \subset V(P)$ we denote $U^P_- \coloneqq \left\{u^P_- : u \in U \right\}$, $U^P_+ \coloneqq \left\{u^P_+ : u \in U \right\}$.
	When the path/cycle is clear from the context, we sometimes omit the superscript and simply write $u_-, u_+$ and $U_-, U_+$.
	
	
	\section{Proof outline}
	\label{sec:ProofOutline}
	
	Our proof, roughly, consists of the following two main phases: 
	
	\paragraph*{Phase 1} Finding $\delta^{\pm}(D_{n,p})$ edge-disjoint $1$-\emph{factors}, where a $1$-factor is a collection of vertex disjoint cycles covering all the vertices of the graph.
	\paragraph*{Phase 2} Converting each $1$-factor into a Hamilton cycle using disjoint sets of edges (in particular, the resulting Hamilton cycles are edge-disjoint). 
	\bigskip
	
	In order to execute our plan, we would start Phase 1 by generating a subdigraph $D'\subseteq D_{n,p}$ such that $(1)$ $\delta^{\pm}(D')=\delta^{\pm}(D_{n,p})$, and $(2)$ $D'$ contains $\delta^{\pm}(D_{n,p})$ edge-disjoint $1$-factors.
	Moreover, we would generate $D'$ in a way that $D_{n,p}=D'\cup D''$ where $D''$ is a carefully defined random digraph in which we expose edges with probability $p_1$, for some small $p_1$ to be determined later.
	This extra randomness of $D''$ would be used in Phase 2 to convert the $1$-factors into Hamilton cycles using an ``online sprinkling'' method.
	Since $D'$ contains $\delta^{\pm}(D_{n,p})$ edge-disjoint $1$-factors, and since $D_{n,p}$ is ``roughly regular'', it follows that $|E(D'')|$ is typically ``small''.
	In particular it means that $p_1$ cannot be too large (and on the other hand it cannot be too small, as we would use only edges of $D''$ to convert $1$-factors into Hamilton cycles in Phase 2).
	Since $p_1$ is ``relatively small'', it could be useful if each $1$-factor would not have ``too many'' cycles (and hence, we would not need to use ``too many'' new edges in order to convert it into a Hamilton cycle), and indeed, we show that this is the case.
	The way that we generate $D'$ (and hence, $D_{n,p}$) is based on a sophisticated multiple exposure argument that enables us to find the desired family of edge-disjoint $1$-factors, each of which consists of at most $O(\log n)$ cycles (see~\Cref{lem:fewcycles}).
	
	\subsection{Phase 1}
	\paragraph*{Generating $D'$ along with the desired $1$-factors.}
	We use a small tweak on a natural (and known) bijection between digraphs and bipartite graphs as follows:
	Let $B = (X\cup Y, E)$ be a bipartite graph where $X$ and $Y$ are two disjoint copies of $[n]$. For convenience we label $X=\{x_1,\ldots,x_n\}$ and $Y=\{y_1,\ldots,y_n\}$, and let $\pi\in S_n$ be any permutation. Let $D_{\pi}:=D_{\pi}(B)$ be a digraph with vertex-set $V(D_{\pi})=[n]$, and with edge-set, $E(D_{\pi})$, consisting of all directed edges $\overrightarrow{i\pi(j)}$ where $x_iy_j\in E(B)$, except for loops (that is, we erase directed edges of the form $\overrightarrow{ii}$).  
	
	In Procedure~\ref{procedure:D} we describe how to generate $B$ and $D'$. Let $p_0, p_1 \in [0,1]$ be such that $(1-p_0)(1-p_1) = 1-p$, and $p_1 = \sqrt{\frac{p}{n \log^4 n}}$.
	\begin{algorithm}
		\caption{Generate $D'$ with $\delta^{\pm}(D')$ edge-disjoint $1$-factors}
		\label{procedure:D}
		\begin{algorithmic}[1]
			\STATE \textbf{First exposure.} Generate a bipartite $B'=(X\cup Y,E')$ by adding each edge $xy\in X\times Y$ with probability $p_0$, independently at random.
			\STATE Let $x^+ \in X$ and $y^- \in Y$ be any two vertices of minimum degree in $X$ and $Y$, respectively.
			\STATE \textbf{Second exposure.} Each edge from $\{x^+y, xy^- \mid x\in X,y\in Y\}$, is being added to $E'$ with probability $p_1$, independently at random. Let $B= (X\cup Y, E)$ denote the obtained bipartite graph.\STATE \textbf{Find edge-disjoint perfect matchings.} Let $\delta \coloneqq \min\{\deg_{B}(x^+), \deg_{B}(y^-) \}$.
			If there are $\delta$ edge-disjoint perfect matchings in $B$ declare SUCCESS and continue.
			Otherwise end procedure and output FAILURE.
			\STATE \textbf{Convert to a digraph.} Let $\pi \in S_n$ be a randomly chosen permutation, and define $D' \coloneqq D_{\pi}(B)$.
		\end{algorithmic}
	\end{algorithm}
	
	A moment's thought reveals that $D'$ can be coupled as a subdigraph of $D_{n,p}$.
	In particular, one can easily see that if $D''$ is a random graph on the vertex-set $[n]$ obtained by exposing all the edges in $\left\{\overrightarrow{uv} ~:~ u \in [n]\setminus \{x^+ \}, \; v \in [n]\setminus \{\pi(y^-) \} \right\}$ independently with probability $p_1$, then $D_{n,p}=D'\cup D''$.
	
	The (typical) success of Step $4$ in Procedure~\ref{procedure:D} follows from \Cref{lem:disjointPM}.
	Now, fix any collection of edge-disjoint perfect matchings, and observe that it corresponds to a collection of edge-disjoint $1$-factors in $D'$ (no matter which $\pi$ we choose).
	Moreover, by choosing $\pi\in S_n$ uniformly at random, each of the perfect matchings in $B$ corresponds to a \emph{random} $1$-factor.
	In particular, by the well known fact that the expected number of cycles in a random $1$-factor is $O(\log n)$, and by concentration of measure (see \Cref{lem:fewcycles}) we conclude that, typically, all the matchings in our collection correspond to $1$-factors with at most (say) $\log^2n$ many cycles.
	For this property of the set of $1$-factors we get in $D'$, choosing a random uniform permutation is a crucial step in our proof.
	
	By the choice of the parameter $p_0$ we already get that typically $\delta = \delta^{\pm}(D_{n,p})$ (see \Cref{lem:mindegDnp}).
	We then show that typically step $4$ in Procedure~\ref{procedure:D} does not output FAILURE (see \Cref{lem:disjointPM}).
	This altogether imply that with high probability, once $D'$ is generated, we already get it with $\delta$ edge-disjoint $1$-factors in it.
	
	\subsection{Phase 2}
	\paragraph*{Generating $D''$ and converting $1$-factors into Hamilton cycles.}
	We expose the remaining edges using an ``online sprinkling'' technique (this technique was introduced in~\cite{OnlineSprinkling}) as follows: for a carefully chosen $q\in [0,1]$, we wish to convert each $1$-factor into a Hamilton cycle, one at a time,  in an ``economical'' way by exposing edges only when needed with probability $q$, independently at random. 
	Then, at the end of the process, we will show that typically, for every pair $(x,y)\in [n]^2$, we have not tried to expose $(x,y)$ more than $p_1/q$ times (see~\Cref{lem:uvProb}) and therefore, the ``newly exposed'' edges can be coupled as a subdigraph of $D''$ (for more details, see \cite{OnlineSprinkling}). 
	
	In order to ``tailor'' cycles in a given $1$-factor to one another we choose a special vertex from each cycle (in each $1$-factor), from which we expose edges to another cycle to initiate our directed version of P\'{o}sa's rotation-extention technique, a process which is similar in spirit to the ``double-rotation'' process introduced in \cite{frieze1988algorithm}.
	However, since some cycles are possibly very short, a vertex might be chosen ``too many times'', which could be a problem as we might expose some edges incident to this vertex more than $p_1/q$ many times, and we will not be able to couple the resulting digraph with a subdigraph of $D''$.
	To overcome this issue we prove \Cref{lem:keygnrl}, which is a key lemma in our proof. 
	Roughly speaking, this lemma states that typically no vertex can be chosen ``too many times'' (intuitively, if the $1$-factors were truly random and independent, then this is clearly true).
	Interestingly, the argument in the proof of this lemma is one of these rare examples where a second moment calculation is too weak but a third moment calculation suffices.
	
	
	\section{\texorpdfstring{Phase 1: Generating $D'$ along with $\delta^{\pm}(D')$ edge-disjoint $1$-factors}{Phase 1: Generating D' along with delta(D') edge-disjoint 1-factors}}
	
	Throughout the paper we make an extensive use of the known concentration inequality by Chernoff, bounding the lower and the upper tails of the Binomial distribution (see, e.g.,~\cites{AlonSpencer,JRLrandomgraphs}).
	\begin{theorem}[Chernoff bound]
		\label{Chernoff}
		Let $X\sim \Bin(n, p)$ and let $\EE[X] = \mu$.
		Then
		\begin{itemize}
			\item $\Pr[X < (1-\eta)\mu]<e^{-\eta^2\mu/2}$ for every $0 < \eta < 1$;
			\item $\Pr[X>(1+\eta)\mu] < e^{-\eta^2\mu/(2+\eta)}$ for every $\eta > 0$.
		\end{itemize}
	\end{theorem}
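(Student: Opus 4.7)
The plan is to use the standard Chernoff / Cram\'er moment-generating-function method. Writing $X=\sum_{i=1}^n X_i$ with independent $X_i\sim\Ber(p)$, I would first note that for any $t\in\RR$,
\[
\EE\!\left[e^{tX}\right]=\prod_{i=1}^n \EE\!\left[e^{tX_i}\right]=\bigl(1-p+pe^t\bigr)^n \leq \exp\!\bigl(np(e^t-1)\bigr)=\exp\!\bigl(\mu(e^t-1)\bigr),
\]
using $1+x\leq e^x$ with $x=p(e^t-1)$. Everything else follows from Markov's inequality applied to $e^{tX}$ with an appropriately chosen $t$.

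For the upper tail, taking $t>0$ and $a=(1+\eta)\mu$ gives
\[
\Pr[X>(1+\eta)\mu]\leq \exp\!\bigl(\mu(e^t-1)-t(1+\eta)\mu\bigr).
\]
Optimising over $t$ yields $t=\log(1+\eta)$, leading to the exponent $\mu\bigl(\eta-(1+\eta)\log(1+\eta)\bigr)$. I would then verify the elementary analytic inequality
\[
(1+\eta)\log(1+\eta)-\eta \;\geq\; \frac{\eta^2}{2+\eta}\qquad\text{for all } \eta>0,
\]
which can be checked by comparing Taylor expansions at $0$ and noting that the difference is monotone in $\eta$. This delivers the stated bound $\exp(-\eta^2\mu/(2+\eta))$.

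For the lower tail, I would take $t<0$ and argue symmetrically, reducing (after optimisation at $t=\log(1-\eta)$) to the inequality $(1-\eta)\log(1-\eta)+\eta\geq \eta^2/2$ valid on $(0,1)$, again by Taylor expansion of $\log(1-\eta)=-\sum_{k\geq 1}\eta^k/k$. The main (and only) obstacle is the two elementary analytic inequalities comparing $(1\pm\eta)\log(1\pm\eta)$ with quadratic expressions in $\eta$; once these are in hand, the rest is purely mechanical. Since this is a classical result stated here only for reference, a one-line citation to \cites{AlonSpencer,JRLrandomgraphs} would suffice in the paper.
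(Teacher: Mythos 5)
Your proposal is correct: the moment-generating-function argument, the optimisation at $t=\log(1\pm\eta)$, and the two elementary inequalities $(1+\eta)\log(1+\eta)-\eta\ge\eta^2/(2+\eta)$ and $(1-\eta)\log(1-\eta)+\eta\ge\eta^2/2$ are exactly the standard derivation of these bounds. The paper gives no proof at all — it simply cites \cites{AlonSpencer,JRLrandomgraphs} — and your sketch reproduces the textbook argument found there, so a citation would indeed suffice.
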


	The procedure for generating $D'$ along with the $1$-factors was already described in Phase 1 of the proof outline (see Procedure~\ref{procedure:D}), and in this section we prove that this is indeed possible.
	
	We use the following result by Gale and Ryser (see, e.g.,~\cite{lovasz2007combinatorial}) to find edge-disjoint perfect matchings in an auxiliary bipartite graph.
	\begin{theorem}[Gale-Ryser]
		\label{lem:GaleRyser}
		A bipartite graph $G = (X \cup Y, E)$ with $|X| = |Y| = n$ contains an $r$-factor if and only if for all $A \subseteq X$ and $B \subseteq Y$, the following holds,
		\begin{align}
		\label{eq:GaleRyser}
		e_G(A, B) \ge r\left(|A| + |B| - n \right).
		\end{align}
	\end{theorem}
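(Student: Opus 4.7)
The plan is to prove both implications separately. Necessity is a short double-counting argument, and sufficiency reduces to the Max-Flow Min-Cut theorem applied to an auxiliary network.

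For necessity, suppose $H \subseteq G$ is an $r$-factor, and fix $A \subseteq X$ and $B \subseteq Y$. Every vertex of $A$ sends exactly $r$ edges of $H$ into $Y$, so the total number of $H$-edges incident to $A$ equals $r|A|$. On the other hand, every vertex of $Y \setminus B$ has $H$-degree $r$, and in particular receives at most $r$ edges from $A$, giving $e_H(A, Y \setminus B) \leq r(n - |B|)$. Subtracting yields $e_H(A, B) \geq r|A| - r(n - |B|) = r(|A| + |B| - n)$, and since $H \subseteq G$ we conclude $e_G(A, B) \geq r(|A| + |B| - n)$.

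For sufficiency, I would build a flow network $N$ with source $s$, sink $t$, an arc from $s$ to each $x \in X$ of capacity $r$, an arc from each $y \in Y$ to $t$ of capacity $r$, and an arc from $x$ to $y$ of capacity $1$ for each edge $xy \in E(G)$. Any integer-valued $s$--$t$ flow in $N$ of value $rn$ necessarily saturates every arc out of $s$ and every arc into $t$, so its support on the middle layer is precisely the edge set of an $r$-factor of $G$; conversely, every $r$-factor yields such a flow. Since all capacities are integers, Max-Flow Min-Cut together with integrality of max flow reduces the task to showing that every $s$--$t$ cut has capacity at least $rn$.

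To finish, take an arbitrary cut $(S, T)$ with $s \in S$ and $t \in T$, and set $A \coloneqq X \cap S$, $B \coloneqq Y \cap T$. The cut capacity is
\[
r(n - |A|) + r(n - |B|) + e_G(A, B) = 2rn - r(|A| + |B|) + e_G(A, B),
\]
which is at least $rn$ precisely when $e_G(A, B) \geq r(|A| + |B| - n)$. Hence the hypothesis of the theorem translates exactly into the statement that every cut has capacity at least $rn$, which completes the argument. There is no deep obstacle here; the only thing to execute carefully is the parametrization of cuts by the pair $(A, B) = (X \cap S, Y \cap T)$ and the accompanying arithmetic. A more elementary alternative would be to blow up each vertex into $r$ copies and apply Hall's theorem on the resulting bipartite graph, but the max-flow formulation feels more transparent and avoids duplicating edges.
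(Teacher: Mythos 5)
Your proof is correct and complete. The paper does not actually prove this statement -- it is quoted as a classical result with a pointer to the literature (Lov\'asz's problem book) -- so there is no internal argument to compare against. Both directions of your argument check out: the necessity computation $e_H(A,B) = e_H(A,Y) - e_H(A, Y\setminus B) \ge r|A| - r(n-|B|)$ is exactly right, and in the max-flow network the capacity of the cut determined by $A = X \cap S$, $B = Y \cap T$ is $r(n-|A|) + r(n-|B|) + e_G(A,B)$ as you claim, so the hypothesis \eqref{eq:GaleRyser} is precisely the condition that every cut has capacity at least $rn$; integrality of the maximum flow then produces the $r$-factor. This is the standard derivation, and the alternative you mention (splitting each vertex into $r$ copies and invoking Hall/K\"onig) is the other standard route; either is acceptable here.
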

	
	For a graph $G$ we denote by $\delta_2(G)$ its second smallest degree.
	Another crucial result that we need is the following corollary of a theorem by Bollob\'{a}s (Theorem 3.15 in~\cite{bollobas1998random}), simplified for our needs.
	By \emph{with high probability} we mean with probability tending to $1$ as $n$ tends to infinity.
	\begin{theorem}
		\label{thm:DegGap}
		Let $\omega\left(\frac{\log^2 n}{n}\right)= p_0 \leq 1/2$.
		Then with high probability we have
		\begin{align*}
		\delta_2(B_{n,p_0}) - \delta(B_{n,p_0}) \ge \frac{\sqrt{np_0}}{\log n}.
		\end{align*}
	\end{theorem}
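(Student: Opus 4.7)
The plan is to prove $\Pr[\delta_2(B_{n,p_0}) - \delta(B_{n,p_0}) < g] = o(1)$ with $g := \sqrt{np_0}/\log n$, via a first-moment estimate that exploits a local central limit theorem for $\Bin(n,p_0)$ in its moderate-deviation regime. The hypothesis $p_0 = \omega(\log^2 n / n)$ is used precisely to ensure that the LCLT is accurate on the tail window of width $O(\sqrt{np_0 \log n})$ around $\mu := np_0$ where the minimum degree lives.

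First I would pin $\delta$: applying \Cref{Chernoff} with $\eta = 2\sqrt{\log n / (np_0)}$ and a union bound over all $2n$ vertices yields that with high probability $\delta \ge \mu - 2\sqrt{np_0 \log n}$. Second, on the event $\{\delta_2 - \delta < g\}$ there must exist distinct vertices $u \neq v$ and an integer $d$ in the window $[\mu - 2\sqrt{np_0 \log n},\mu]$ such that $\deg(u) = d$, $\deg(v) \in [d,d+g)$, and $\deg(w) \ge d$ for every other $w$. I would union-bound over such triples, splitting by whether $u,v$ lie on the same side of the bipartition (so $\deg(u),\deg(v)$ are independent $\Bin(n,p_0)$ variables) or on opposite sides (so they share a single Bernoulli $\ind[uv \in E]$, which after conditioning leaves independent $\Bin(n-1,p_0)$-type summands). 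Writing $d = \mu - t\sqrt{np_0}$, the LCLT gives, for $t = O(\sqrt{\log n})$,
\[
\Pr[\deg(u) = d] = O\!\left(\tfrac{e^{-t^2/2}}{\sqrt{np_0}}\right), \qquad \Pr[\deg(v) \in [d,d+g)] = O\!\left(\tfrac{g\, e^{-t^2/2}}{\sqrt{np_0}}\right),
\]
while the constraint that the remaining $2n-2$ vertices all have degree $\ge d$ contributes, via Mills' ratio, a factor $\exp\!\left(-c n t^{-1} e^{-t^2/2}\right)$. Multiplying by $\binom{2n}{2}$ pairs and replacing the sum over integer $d$ by $\sqrt{np_0} \int dt$, the total bound reduces to an integral that is dominated by the neighborhood of $t \asymp \sqrt{2\log n}$ and evaluates to $O(1/\sqrt{\log n}) = o(1)$.

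The main obstacle is uniform control of the LCLT across the entire tail window $|t| \le C\sqrt{\log n}$: one needs the third-order Edgeworth correction of order $t^3/\sqrt{np_0}$ to vanish, which is exactly what the hypothesis $np_0 = \omega(\log^2 n)$ affords. A cleaner alternative, presumably the route the authors have in mind given the explicit citation, is to invoke Bollob\'as' Theorem 3.15 directly as a Poisson limit for the counts of vertices of each low degree; the claim then reduces to the fact that the gap between the first two atoms of a (nonhomogeneous) Poisson process whose local intensity at the typical minimum is of order $\sqrt{\log n / (np_0)}$ is unlikely to fall below $g = \sqrt{np_0}/\log n$, with small-gap probability $O(1/\sqrt{\log n})$.
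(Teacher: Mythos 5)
The paper does not prove \Cref{thm:DegGap} at all: it is stated as a corollary of Bollob\'as's Theorem 3.15 on the degree sequence, ``simplified for our needs'', so your second suggested route (invoking Bollob\'as directly, viewing the low-degree counts as an inhomogeneous Poisson process with local intensity of order $\sqrt{\log n/(np_0)}$ near the minimum) is exactly what the authors do, and your closing estimate $O(1/\sqrt{\log n})$ for the small-gap probability matches the right heuristic. Your first-moment argument is therefore a genuinely different, self-contained route, and its skeleton is sound: Chernoff to confine $\delta$ to the window $[\mu-2\sqrt{np_0\log n},\mu]$, a union bound over pairs $(u,v)$ and over $d$, the same-side/opposite-side split to handle the single shared edge indicator in the bipartite model, and the balance $n^2\,q_d\cdot g\,q_d\cdot(1-Q_{d-1})^{2n-2}$ (with $q_d=\Pr[\Bin(n,p_0)=d]$, $Q_d=\Pr[\Bin(n,p_0)\le d]$) which, summed over $d$, gives $O(g\,t/\sqrt{np_0})=O(1/\sqrt{\log n})=o(1)$ at $t\asymp\sqrt{2\log n}$. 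What a written-out version of your argument buys over the paper's citation is a proof that visibly works for the bipartite model (Bollob\'as's theorem is stated for $G_{n,p}$, and the transfer, while routine, is silent in the paper) and an explicit quantitative rate.

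One claim in your sketch is wrong as stated, though the proof survives without it. You assert that the Edgeworth correction $t^3/\sqrt{np_0}$ vanishes ``exactly'' because $np_0=\omega(\log^2 n)$; but with $t\asymp\sqrt{\log n}$ one has $t^3/\sqrt{np_0}=\log^{3/2}n/\sqrt{np_0}$, which is $o(1)$ only when $np_0=\omega(\log^3 n)$. In the regime $np_0=\log^{2+o(1)}n$ the multiplicative error in the pointwise LCLT can be as large as $e^{o(\sqrt{\log n})}$, which overwhelms the $\log^{-3/2}n$ margin your integral produces. The fix is to avoid pointwise asymptotics altogether: the computation only needs the discrete Mills-ratio bounds $q_{d-1}/q_d=1-t/\sqrt{np_0}+O(1/\sqrt{np_0})$ and hence $Q_d=\Theta(q_d\sqrt{np_0}/t)$ uniformly over the window, which follow from the exact formula for the ratio of consecutive binomial probabilities and need no CLT. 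With these, the sum over $d$ becomes a geometric sum in $nQ_d$ and the bound $O(g\,t/\sqrt{np_0})$ drops out. The hypothesis $np_0=\omega(\log^2 n)$ is not what validates a local limit theorem; it is what makes the claimed gap $g=\sqrt{np_0}/\log n$ at least of order one (and hence the statement non-trivial), and what keeps the Chernoff window inside the range where the ratio estimates are uniform. You should also say a word about the mild dependence between the event $\{\deg(w)\ge d\ \forall w\}$ and the conditioning on $\deg(u),\deg(v)$; restricting that constraint to the $n-2$ vertices on the same side as $u$ costs only a constant in the exponent and removes the issue.
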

	
	We also need  upper bound on the minimum degree of a random bipartite graph.
	The following lemma is a version of Lemma 2.2 in~\cite{krivelevich2012optimal}, stated here for random bipartite graphs and for a larger regime of $p$, but the proof is essentially similar to the original statement.
	\begin{lemma}[\cite{krivelevich2012optimal}]
		\label{lem:deltaUppr}
		If $\frac{\log n}{n} \le p_0 < 1$ then with high probability
		\[np_0 - 2\sqrt{np\log n} \le \delta(B_{n,p_0}) \le np_0 - \frac{1}{3}\sqrt{np_0 \log n}. \]
	\end{lemma}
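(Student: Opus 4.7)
The plan is to handle the two bounds separately. The lower bound is the easier direction and is a routine union bound: for any $v \in X \cup Y$, its degree in $B_{n,p_0}$ is distributed as $\Bin(n,p_0)$ with mean $\mu = np_0$. Apply the lower-tail Chernoff bound (\Cref{Chernoff}) with $\eta = 2\sqrt{\log n/(np_0)}$, which lies in $(0,1)$ thanks to $p_0 \ge \log n/n$; this yields $\Pr[\deg(v) < np_0 - 2\sqrt{np_0 \log n}] \le e^{-\eta^2 \mu/2} = n^{-2}$. A union bound over the $2n$ vertices on the two sides gives the lower bound with probability $1 - O(1/n)$.

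For the upper bound I would use the second moment method. Set $d := \lfloor np_0 - \tfrac{1}{3}\sqrt{np_0\log n}\rfloor$, and let $N := \sum_{v \in X \cup Y} \mathbf{1}[\deg_B(v) \le d]$. First, using a sharp lower estimate on the binomial lower tail, of the form $\Pr[\Bin(n,p_0) \le np_0 - t\sqrt{np_0 q_0}] \ge c\exp(-t^2/2)/(t\sqrt{2\pi})$ for $q_0 = 1-p_0$ (proved either by Stirling's formula directly or by invoking a standard local central limit bound for the binomial), with $t$ of order $\sqrt{\log n}$, one obtains $\Pr[\deg(v) \le d] \ge n^{-\alpha}$ for an explicit $\alpha < 1$. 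Hence $\mathbb{E}[N] = 2n \cdot \Pr[\deg(v) \le d] \to \infty$.

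Next I would bound the variance. For any pair $u, v$ on the same side of $B$, the random variables $\deg(u)$ and $\deg(v)$ depend on disjoint sets of edges, hence are independent and contribute zero to the covariance of the corresponding indicators. For $u \in X$ and $v \in Y$, the variables $\deg(u)$ and $\deg(v)$ share exactly one common edge-indicator (namely $\mathbf{1}[uv \in E]$), and a direct computation conditioning on this edge shows the covariance of the two indicators is $O(1/n)$ times their expectations. Summing gives $\Var(N) = (1+o(1))\mathbb{E}[N]$, so Chebyshev's inequality yields $\Pr[N = 0] \le \Var(N)/(\mathbb{E}[N])^2 = o(1)$, completing the upper bound.

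The main obstacle will be the sharp lower bound on the binomial lower tail: standard Chernoff inequalities as stated in \Cref{Chernoff} only give upper bounds on tail probabilities, whereas here we need a matching lower bound of order $n^{-\alpha}$ with $\alpha < 1$ across the full regime $\log n/n \le p_0 < 1$. For moderate $p_0$ this follows from a Gaussian approximation with explicit error terms, but for $p_0$ near the lower end $\log n/n$ one has to argue more carefully through Stirling applied to a single term of the binomial mass function and compare ratios of consecutive terms. This is exactly the content of the original Lemma 2.2 of~\cite{krivelevich2012optimal}, and the adaptation to a bipartite graph is straightforward once that estimate is in place.
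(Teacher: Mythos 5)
The paper does not actually prove this lemma; it defers to Lemma 2.2 of~\cite{krivelevich2012optimal}, and your proposal is exactly the standard argument used there (Chernoff plus union bound for the lower bound, second moment with a matching lower bound on the binomial lower tail for the upper bound), adapted correctly to the bipartite setting where same-side degrees are independent and opposite-side degrees share a single edge indicator. Two small points to patch. First, your claim that $\eta = 2\sqrt{\log n/(np_0)}$ lies in $(0,1)$ because $p_0 \ge \log n/n$ is false at the bottom of the range (at $np_0 = \log n$ one gets $\eta = 2$); however, whenever $np_0 \le 4\log n$ the quantity $np_0 - 2\sqrt{np_0\log n}$ is nonpositive and the lower bound is vacuous, so you may simply split into the trivial case $np_0 \le 4\log n$ and the case $np_0 > 4\log n$ where your Chernoff application is valid. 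Second, for the upper bound your own normalization exposes that the deviation $\tfrac{1}{3}\sqrt{np_0\log n}$ equals $t\sqrt{np_0q_0}$ with $t = \tfrac{1}{3}\sqrt{\log n/q_0}$, so the estimate $\Pr[\deg(v)\le d] \ge n^{-\alpha}$ with $\alpha<1$ only holds when $q_0 = 1-p_0$ is bounded away from $0$; this is harmless for the paper, which only invokes the lemma for $p_0 \le \varepsilon$, but your write-up should state this restriction rather than claim the full range $p_0 < 1$.
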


	\hspace{10pt}
	
	Let $\frac{\log^{15} n}{n} \le p < \varepsilon$, where we always have $\varepsilon = \frac{8^8}{2(9e)^9}$ throughout the paper, and let $p_0, p_1$ be satisfying $(1-p_0)(1-p_1) = 1-p$, where $p_1 = \sqrt{\frac{p}{n \log^4 n}}$.
	We show that typically $\delta^{\pm}\left(D_{n,p} \right) = \delta$, that is, that the number of $1$-factors found in step $4$ of the procedure is precisely the quantity we aim for.
	\begin{lemma}
		\label{lem:mindegDnp}
		Let $D'$ be a digraphs generated by Procedure~\ref{procedure:D}.
		Then with high probability we have $\delta^{\pm}\left(D_{n,p} \right) = \delta^{\pm}\left(D' \right)$.
	\end{lemma}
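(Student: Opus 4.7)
The plan is to establish the two coordinate-wise equalities $\delta^+(D')=\delta^+(D_{n,p})$ and $\delta^-(D')=\delta^-(D_{n,p})$, from which the stated equality follows by taking minima. The forward directions $\delta^+(D')\le \delta^+(D_{n,p})$ and $\delta^-(D')\le \delta^-(D_{n,p})$ are immediate from the coupling $D'\subseteq D_{n,p}$. For the reverse directions I will exploit the deliberate asymmetry of Procedure~\ref{procedure:D}: the auxiliary random digraph $D''$ was defined so that no edge of $D''$ originates from $x^+$ and no edge of $D''$ terminates at $\pi(y^-)$. Consequently
\[
\deg^+_{D_{n,p}}(x^+)=\deg^+_{D'}(x^+) \qquad\text{and}\qquad \deg^-_{D_{n,p}}(\pi(y^-))=\deg^-_{D'}(\pi(y^-)),
\]
so it suffices to show that, with high probability, $x^+$ attains the minimum out-degree in $D'$ and $\pi(y^-)$ attains the minimum in-degree in $D'$.

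For the out-degree half, the bijection $D'=D_\pi(B)$ yields
\[
\deg^+_{D'}(i)=\deg_B(x_i) - \ind\!\bigl[x_i y_{\pi^{-1}(i)}\in E(B)\bigr] \quad\text{for every } i\in[n],
\]
so $D'$-out-degrees and $X$-degrees in $B$ differ by at most the loop correction $1$. It therefore suffices to show that the $X$-degree of $x^+$ in $B$ is smaller than every other $X$-degree by more than $1$. After Step~1, the $X$-degrees of $B'$ are independent $\Bin(n,p_0)$ random variables, so \Cref{thm:DegGap} applied to the $X$-side yields that with high probability the second smallest $X$-degree in $B'$ exceeds $\deg_{B'}(x^+)$ by at least $\sqrt{np_0}/\log n$. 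In Step~3, the degree $\deg(x^+)$ gains an independent $\Bin(n,p_1)$ increment, which by \Cref{Chernoff} is at most $2np_1=2\sqrt{np/\log^4 n}$ with high probability, while every other $x_i\neq x^+$ gains at most one new edge (the possible edge to $y^-$). Since $p_0\ge p/2$,
\[
2np_1+2 \;=\; O\!\Bigl(\tfrac{\sqrt{np}}{\log^2 n}\Bigr) \;\ll\; \frac{\sqrt{np_0}}{\log n},
\]
so the $X$-degree gap at $x^+$ survives Step~3 with room to spare. Hence $x^+$ uniquely minimizes the out-degree in $D'$, and therefore
\[
\delta^+(D_{n,p})\le \deg^+_{D_{n,p}}(x^+) = \deg^+_{D'}(x^+) = \delta^+(D').
\]

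The in-degree half is proved by the entirely symmetric argument applied to the $Y$-side of $B$, using the analogous identity $\deg^-_{D'}(\pi(j))=\deg_B(y_j)-\ind[\text{loop}]$. The main obstacle, though a modest one, is the bookkeeping that ensures the degree gap from \Cref{thm:DegGap} survives both the $\Bin(n,p_1)$ increment at $x^+$ (resp.\ $y^-$) in Step~3 and the $\pm 1$ loop correction. This is precisely what the choice $p_1=\sqrt{p/(n\log^4 n)}$ is engineered for: the gap $\sqrt{np}/\log n$ is a factor of $\log n$ larger than the Step-3 perturbation $\sqrt{np}/\log^2 n$, so the minimizer of the min-in/out-degree is forced to sit at the two vertices $x^+$ and $\pi(y^-)$ that $D''$ was designed to avoid.
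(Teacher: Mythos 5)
Your proposal is correct and follows essentially the same route as the paper: apply the degree-gap theorem (\Cref{thm:DegGap}) to see that the designated minimum-degree vertices remain minimizers after the Step-3 perturbation (since $np_1 = o(\sqrt{np_0}/\log n)$ using $p_0 \ge p/2$), and use the fact that all edges at $x^+$ and $\pi(y^-)$ are fully exposed in $D'$ so their relevant degrees agree with those in $D_{n,p}$. Your write-up is in fact somewhat more explicit than the paper's on two points it treats in passing, namely the $\pm 1$ loop correction from the permutation step and the separate treatment of the out-degree ($X$-side) and in-degree ($Y$-side) minimizers.
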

	
	\begin{proof}
		Recall that we generate $D'$ in few steps: First we expose a random bipartite graph $B'$ with edge-probability $p_0$, then we we extend it to a bipartite graph $B$ by considering the minimum degree vertices in each part of $B'$, and exposing all the edges incident to them with probability $p_1$, and lastly, we take a random permutation $\pi$ to generate $D'$, as explained in the procedure.
		Observe that, with high probability, the last part does not affect the minimum degree.
		Therefore, it is enough to show that with high probability $\delta(B) = \delta^{\pm}\left(D_{n,p} \right)$.
		
		By \Cref{thm:DegGap}, with high probability there is a unique vertex of minimum degree in $B'$, denote it by $x$, and the second minimum degree is $\delta_2(B') \ge \delta(B') + \frac{\sqrt{np_0}}{\log n}$.
		Then with high probability, after exposing edges from $x$ with probability $p_1$, we have $\deg_B(x) = \delta(B') + O(np_1)$.
		If we have $\frac{\sqrt{np_0}}{\log n} = \omega\left(np_1 \right)$ then $\deg_B(x) \le \delta_2(B') \le \delta_2(B)$ which means that $x$ has minimum degree in $B$ as well.
		Indeed, since $p_0 = \frac{p - p_1}{1 - p_1} \ge p - p_1 \ge \frac{p}{2}$, we get that
		\[\frac{\sqrt{np_0}}{\log n} \ge \frac{\sqrt{np}}{2\log n} = \omega\left(\frac{\sqrt{np}}{\log^2 n} \right) = \omega(np_1). \]
		So we have $\delta^{\pm}(D') = \delta(B) = \deg_B(x)$.
		
		Moreover, with high probability, for any $v \in [n]$ we have
		\begin{align*}
		\deg^{\pm}_{D_{n,p}}(x) = \deg^{\pm}_{D'}(x) \le \deg^{\pm}_{D'}(v) \le \deg^{\pm}_{D_{n,p}}(v),
		\end{align*}
		where $\deg^{\pm}_D(v) \coloneqq \min \{\deg^-_D(v), \deg^+_D(v)\}$.
		Thus, with high probability we have $\delta^{\pm}\left(D_{n,p} \right) = \deg^{\pm}_{D_{n,p}}(x) = \deg^{\pm}_{D'}(x) = \delta^{\pm}(D')$.
	\end{proof}

	It is now left to show that typically step $4$ of Procedure~\ref{procedure:D} does not output FAILURE.

	\begin{lemma}
		\label{lem:disjointPM}
		Let $B$ be a random bipartite graph generated as in the description given in Steps 1-3 of Procedure~\ref{procedure:D}, and let $\delta \coloneqq \min\left\{\deg_{B}\left(x^+ \right), \deg_{B}\left(y^- \right) \right\}$.
		Then with high probability there are $\delta$ edge-disjoint perfect matchings in $B$.
	\end{lemma}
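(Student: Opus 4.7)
My approach is to reduce the task to showing that $B$ contains a $\delta$-regular spanning subgraph (a $\delta$-factor), since such a bipartite subgraph decomposes into $\delta$ edge-disjoint perfect matchings by König's edge-coloring theorem. By the Gale-Ryser theorem (\Cref{lem:GaleRyser}), a $\delta$-factor exists if and only if $e_B(A, C) \geq \delta\big(|A| + |C| - n\big)$ for every $A \subseteq X$ and $C \subseteq Y$. Setting $a = |A|$, $c = |C|$, $a' = n - a$, $c' = n - c$ and $k = a + c - n$, this inequality is trivial when $k \leq 0$, and when $a = n$ or $c = n$ it reduces to $\sum_v \deg_B(v) \geq \delta n$, which follows directly from $\deg_B(v) \geq \delta$. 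The remaining task is to verify it for $a, c \in [1, n-1]$ and $k \geq 1$; my plan is to split by the size of $\min(a', c')$ against a threshold $T = \Theta(\sqrt{np_0}/\log n)$.

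\emph{Small-complement regime} $\min(a', c') \leq T$, say $c' \leq T$. Combining \Cref{thm:DegGap} with the Step-3 analysis from the proof of \Cref{lem:mindegDnp}, with high probability every $v \in X \setminus \{x^+\}$ satisfies $\deg_B(v) \geq \delta + \alpha$ for $\alpha = (1 - o(1))\sqrt{np_0}/\log n$, since $np_1 = o(\alpha)$ absorbs the Step-3 perturbation. Then $\sum_{v \in A} \deg_B(v) \geq a\delta + (a-1)\alpha$, and bounding $e_B(A, Y \setminus C) \leq ac'$ trivially, the Gale-Ryser inequality reduces to $(a - \delta)c' \leq (a - 1)\alpha$, which holds for $a \leq \delta$ trivially and for $a > \delta$ by $c' \leq T \leq \alpha$.

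\emph{Large-complement regime} $\min(a', c') > T$. Here I would apply \Cref{Chernoff} together with a union bound to verify $e_{B'}(A, C) \geq \delta k$; this suffices since $e_B \geq e_{B'}$. Using \Cref{lem:deltaUppr} together with the Step-3 analysis, the expected slack satisfies
\[
\mathbb{E}[e_{B'}(A, C)] - \delta k \;=\; a'c' p_0 + (np_0 - \delta)k \;\geq\; a'c' p_0 + \tfrac{1}{4} k \sqrt{np_0 \log n},
\]
providing two complementary sources of slack. The $k \sqrt{np_0 \log n}$ term dominates when $a' + c'$ is small, giving $(\text{slack})^2/\mathbb{E} \gtrsim k^2 \log n / n$, which comfortably controls the union bound over the $\binom{n}{a'}\binom{n}{c'} \leq e^{(a' + c')\log(en)}$ pairs. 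Conversely, the $a'c' p_0$ term dominates when $a' + c'$ is a non-trivial fraction of $n$, at which point $\min(a', c')$ is forced to be large enough for $(a'c' p_0)^2/(ac p_0)$ to beat $(a' + c') \log n$ under the hypothesis $p \geq \log^{15} n / n$. The main obstacle will be matching these two sub-regimes seamlessly to cover the full parameter range uniformly, which is exactly where the generous polylog factor in the assumption on $p$ is needed.
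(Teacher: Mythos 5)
Your reduction (König plus Gale--Ryser) and your small-complement regime are fine, and the latter is essentially the paper's Case 1: a deterministic argument trading the degree gap $\delta_2-\delta$ against the trivial bound $e_B(A,Y\setminus C)\le a c'$. The problem is the large-complement regime. There the additive lower-tail Chernoff bound on $e_{B'}(A,C)$ does not beat the union bound in an intermediate range of parameters, and this is not a matter of "matching the sub-regimes seamlessly" --- the estimate genuinely fails. Concretely, take $p_0=\log^{15}n/n$, $a'=|X\setminus A|=\sqrt{np_0}/\log n=\log^{6.5}n$ (just above your threshold $T$) and $|C|=c=k+a'\approx\sqrt n$, so $c'\approx n$ and $k\approx\sqrt n$. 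Your slack is $S=a'c'p_0+\Theta\big(k\sqrt{np_0\log n}\big)=\log^{21.5}n+\Theta\big(\sqrt n\,\log^{8}n\big)$, while $\mu=acp_0\approx\sqrt n\,\log^{15}n$, so the Chernoff exponent is $S^2/(2\mu)=\Theta\big(\sqrt n\,\log n\big)$ with a constant of order $1/18$ (the constant is pinned down by \Cref{lem:deltaUppr}, which only guarantees $np_0-\delta\ge\frac13\sqrt{np_0\log n}$, and cannot be improved past the true second-order term of the minimum degree). The union bound over these pairs costs $\log\big(\binom{n}{a'}\binom{n}{c}\big)\approx\frac12\sqrt n\,\log n$, which is larger. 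The same failure occurs throughout the range where $\min(a',c')$ is only polylogarithmically above $T$ and $k$ satisfies $np_0/\mathrm{polylog}(n)\ll k\ll n^{1-\Omega(1)}$: the exponent and the union-bound cost are both $\Theta(k\log n)$ with constants that go the wrong way.

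The missing idea --- and what the paper does --- is to keep using the degree-sum identity in the probabilistic regime as well: write $e_B(A,C)=e_B(A,Y)-e_B(A,Y\setminus C)\ge \delta_2(a-1)+\delta-e_B(A,Y\setminus C)$, so that a violation of Gale--Ryser forces $e_B(A,Y\setminus C)>\gamma(a-1)+\delta c'$ with $\gamma=\delta_2-\delta$. One then bounds the \emph{upper} tail of $e_B(A,Y\setminus C)$, whose mean $ac'p_0$ is far below this threshold when $c'$ is small; the multiplicative bound $\binom{ac'}{t}p^{t}\le(\mathrm{e}ac'p/t)^{t}$ gives probabilities like $n^{-\omega(a)}$, which crush the union bound. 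Your proposal only exploits the degree gap deterministically (via $e_B(A,Y\setminus C)\le ac'$), which caps the small-complement regime at $c'\le\alpha$, and then switches to an additive lower-tail estimate that is too weak just beyond that cap. As written, the proof has a genuine hole in this intermediate range.
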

	
	\begin{proof}
		Firstly, note that the graph generated at Step $1$ of Procedure~\ref{procedure:D} is the random binomial bipartite graph $B_{n,p_0}$.
		Using \Cref{lem:GaleRyser} it is enough to prove the following claim.
		\begin{claim}
			\label{cl:step4lemma}
			With high probability, for any $X' \subseteq X$ and $Y' \subseteq Y$ we have
			\begin{align}
			\label{eq:step4claim}
			e_{B}\left(X', Y' \right) \ge \delta\left(|X'| + |Y'| - n \right).
			\end{align}
		\end{claim}
		
		\begin{proof}
			Trivially, (\ref{eq:step4claim}) holds for any $X', Y'$ with $|X'| + |Y'| \le n$.
			Hence, let $X' \subseteq X$ and $Y' \subseteq Y$ be such that $|X'| + |Y'| \ge n+1$.
			Denote $x\coloneqq |X'|$, $y\coloneqq |Y'|$, and assume, without loss of generality, that $x \le y$.
			Let $\delta_2 \coloneqq \delta_2(B)$ and $\gamma \coloneqq \delta_2 - \delta$.
			Note that $X'$ might contain the min-degree vertex, but yet, we always have $e_B(X', Y) \ge \delta_2(x-1) + \delta$.
			Let $Z' \coloneqq Y \setminus Y'$ and $z \coloneqq |Z'|$, and note that since $x+y\geq n+1$ we have that $z \le x-1$. Assume towards a contradiction that $e_B(X',Y')< \delta (x+y-n)$. This implies that
			\begin{align}
			\begin{split}
			\label{eq:eXZ}
			e_B(X', Z') &= e_B(X', Y) - e_B(X', Y') \\
			&> (\delta+\gamma)(x-1) + \delta - \delta(x-z) \\
			&= \gamma(x-1) + \delta z.
			\end{split}
			\end{align}
			We show that with high probability, for every $X'$ and $Z'$ as above, the inequality (\ref{eq:eXZ}) does not hold.
			
			\paragraph*{Case 1:} $x \leq \delta_2$. Since $z\leq x-1$ and $x-\delta\leq \delta_2-\delta=\gamma$, we have that 
			$z(x-\delta)\leq \gamma (x-1)$.
			By rearranging we obtain that
			$e_B(X', Z') \leq xz\leq \gamma(x-1) + \delta z$, which contradicts (\ref{eq:eXZ}).

			\paragraph*{Case 2:} 
			$\delta_2 < x  < \frac{n}{2e}$.
			Note that in this case $x = \omega(\gamma)$ and therefore we have that $\gamma(x-1)=(1-o(1))\gamma x$.
			Using the union bound, the probability of having sets $X'$ and $Z'$ for which (\ref{eq:eXZ}) holds is at most 
			\begin{align*}
			\sum_{\delta_2 < x < \frac{n}{2e}} \sum_{z\le x} \binom{n}{x} \binom{n}{z} \binom{xz}{\gamma x + \delta z} p^{\gamma x + \delta z} &\le \sum_{\delta_2 < x < \frac{n}{2e}} \left(\frac{en}{x} \right)^{2x} \sum_{z \le x} \left(\frac{exzp}{\gamma x + \delta z} \right)^{\gamma x + \delta z} \\
			&\le \sum_{\delta_2 < x < \frac{n}{2e}} \left(\frac{en}{x} \right)^{2x} \sum_{z \le x} \left(\frac{exp}{\delta} \right)^{\gamma x} \\
			&\le \sum_{\delta_2 < x < \frac{n}{2e}} \left(\frac{en}{x} \right)^{2x} \sum_{z \le x} \left(\frac{1.5ex}{n} \right)^{\gamma x}
			&=o(1),		
			\end{align*}
			where the last inequality holds since $\delta\geq \frac{2np}{3}$ and the last equality holds since $\gamma=\omega(1)$.

			\paragraph*{Case 3:}
			$\frac{n}{2e} \le x$ and $z < \frac{\gamma}{3p}$.
			For given $X'$ and $Z'$ we have
			\begin{align*}
			\Pr\left[e_B(X',Z') > \gamma x + \delta z \right] \le \Pr\left[e_B(X',Z') > \gamma x \right] \le \binom{xz}{\gamma x}p^{\gamma x} \le \left(\frac{ezp}{\gamma} \right)^{\gamma x} < \left(\frac{e}{3} \right)^{\gamma x}.
			\end{align*}
			Recall that $\gamma = \omega(1)$ and $x\geq n/2e$ (so in particular we have $\gamma x=\omega(n)$) and therefore, by applying the union bound we obtain that the probability for such $X',Z'$ to exist is at most
			$$2^n\cdot  2^n\cdot \left(\frac{e}{3}\right)^{\omega(n)}=o(1).$$

			\paragraph*{Case 4:}
			$\frac{n}{2e} \le x < n-\frac{n}{\log \log n}$ and $z\geq \frac{\gamma}{3p}= \omega\left(\frac{1}{p} \right)$.
			By \Cref{lem:deltaUppr} we have $\delta= np - 2\sqrt{np\log n} \ge np-\frac{np}{\log^2\log n}$.
			In particular, for $\eta=(\log\log n)^{-1}$ we have $(1+\eta)xp < \delta$, and hence $(1+\eta)xzp < \delta z$.
			Moreover, by combining \Cref{lem:deltaUppr} and \Cref{thm:DegGap} we get $\gamma \ge \frac{\sqrt{np_0}}{2\log n} \ge \log^6 n$, so $\gamma \eta^2=\omega(1)$.
			Therefore, by \Cref{Chernoff}, for given $X', Z'$ we have that
			\begin{align*}
			\Pr\left[e_B(X',Z') > \gamma x + \delta z \right] \le \Pr\left[e_B(X',Z') > (1+\eta)xzp \right] \le e^{-\frac{\eta^2}{2+\eta}xzp}=2^{-\omega(n)}.
			\end{align*}
			Taking a union bound over all possible subsets, the probability for such $X',Z'$ to exist is at most
			$$2^n\cdot  2^n\cdot 2^{-\omega(n)}=o(1).$$
			
			\paragraph*{Case 5:}
			$x\geq n-\frac{n}{\log\log n}$.
			Recall that $y\ge x$, so we also have $y \ge n - \frac{n}{\log\log n}$.
			Note that
			\begin{align*}
			\frac{\delta}{n}xy - \delta(x+y-n) = \frac{\delta}{n}(n-x)(n-y) \ge 0,
			\end{align*}
			hence it is sufficient to show that with high probability we have $e_B(X',Y') \ge \frac{\delta}{n}xy$ for all such $X', Y'$.
			By \Cref{lem:deltaUppr} we know that with high probability we have $\delta \le np - \frac{1}{3}\sqrt{np\log n}$.
			We write
			\begin{align*}
			\frac{\delta}{n}xy = xyp(1-\eta),
			\end{align*}
			where
			\begin{align*}
			\eta = \frac{np - \delta}{np} \ge \frac{1}{3}\sqrt{\frac{\log n}{np}} > 0.
			\end{align*}
			Given $X',Y'$ as above, by \Cref{Chernoff} we get that
			\begin{align*}
			\Pr\left[e_B(X',Y') < \frac{\delta}{n}xy \right] &= \Pr\left[e_B(X',Y') < xyp(1-\eta) \right] \le e^{-\frac{\eta^2}{2}xyp}=2^{-\omega(n)}.
			\end{align*}
			Taking a union bound over all possible subsets, we obtain that the probability for such $X',Y'$ to exist is at most $2^n\cdot 2^n\cdot 2^{-\omega(n)}=o(1)$. This completes the proof of the claim.
		\end{proof}
		
		By applying \Cref{cl:step4lemma} and \Cref{lem:GaleRyser} to $B$ we conclude that with high probability $B$ contains a $\delta$-factor, as desired. This completes the proof of \Cref{lem:disjointPM}.
	\end{proof}
	
	As $B$ typically contains $\delta$ edge-disjoint perfect matchings, the digraph $D' \coloneqq D_{\pi}(B)$ contains $\delta$ edge-disjoint $1$-factors, for any choice of permutation $\pi \in S_n$.
	However, as we have already mentioned, we would like these $1$-factors in $D'$ to satisfy some requirements.
	The first one is to consist of ``not too many'' cycles. Since the expected number of cycles in a random permutation is $O(\log n)$, it makes sense to choose $\pi \in S_n$ uniformly at random.
	
	\begin{lemma}
		\label{lem:fewcycles}
		Let $B=(X\cup Y,E)$ be a bipartite graph with $|X|=|Y|=n$, and suppose that $B$ contains $r\leq n$ edge-disjoint perfect matchings.
		Let $\pi \in S_n$ be a permutation chosen uniformly at random, and consider the digraph $D' \coloneqq D_{\pi}(B)$. Then $D':=D_{\pi}(B)$ contains $r$ edge-disjoint $1$-factors and with high probability each of which consists of at most $4\log n$ directed cycles.
	\end{lemma}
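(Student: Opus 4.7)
Fix any collection $M_1,\ldots,M_r$ of edge-disjoint perfect matchings in $B$ (which exist by hypothesis), and associate to each $M_k$ the permutation $\sigma_k\in S_n$ defined by $x_iy_{\sigma_k(i)}\in M_k$. Under the map $\pi\mapsto D_\pi$, the edges of $M_k$ become $\{\overrightarrow{i\,\pi(\sigma_k(i))}:i\in[n]\}$ with loops erased, i.e.\ the functional graph of the permutation $\tau_k\defined\pi\circ\sigma_k$ on $[n]$. Its cycle decomposition coincides with that of $\tau_k$, so $M_k$ yields a $1$-factor of $D'$; the $r$ such $1$-factors are pairwise edge-disjoint because the matchings $M_k$ are, and this establishes the first assertion for every choice of $\pi$.

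For the cycle count, the key observation is that for each fixed $k$, right-multiplication by $\sigma_k$ is a bijection $S_n\to S_n$, so the marginal law of $\tau_k=\pi\sigma_k$ is uniform on $S_n$ whenever $\pi$ is. I would then appeal to the classical Feller representation: the number of cycles of a uniformly random permutation of $[n]$ has the same law as $Z_n\defined\sum_{i=1}^n X_i$, where the $X_i\sim\Ber(1/i)$ are mutually independent; in particular $\Expec Z_n=H_n=(1+o(1))\log n$.

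A direct application of the Chernoff bound (Theorem~\ref{Chernoff}) to $Z_n$, with $\eta$ chosen so that $(1+\eta)H_n\le 4\log n$ (so $\eta$ is bounded below by a positive constant), yields
\[
\Pr\bigl[Z_n>4\log n\bigr]\le \exp\!\Bigl(-\tfrac{\eta^2}{2+\eta}H_n\Bigr)\le n^{-c}
\]
for some absolute constant $c>1$. A union bound over the $r\le n$ matchings then shows that with high probability every $\tau_k$ has at most $4\log n$ cycles, as desired. The only minor bookkeeping subtlety---which is not a real obstacle---is that a fixed point of $\tau_k$ corresponds to a loop $\overrightarrow{ii}$ erased from $D_\pi$, so the associated $1$-factor may carry isolated vertices; these can be regarded as trivial $1$-cycles in the decomposition and do not affect the stated bound.
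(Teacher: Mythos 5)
Your proof is correct and follows the same overall strategy as the paper's: translate each matching $M_k$ into the functional digraph of a uniformly random permutation (the paper uses exactly the fact that $\pi\circ\sigma_k$ is uniform, though you spell out the bijection argument more explicitly), bound the upper tail of the cycle count by an inverse polynomial in $n$, and union bound over the $r\le n$ matchings. The only real difference is the concentration step: the paper quotes the identity $\Expec\bigl[2^{\sigma(\pi)}\bigr]=n+1$ (citing Ford) and applies Markov's inequality to $2^{\sigma(\pi)}$, whereas you invoke the Feller representation $\sigma(\pi)\overset{d}{=}\sum_{i=1}^n X_i$ with independent $X_i\sim\Ber(1/i)$ and apply Chernoff. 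These are essentially the same exponential-moment argument---indeed the paper's identity is precisely $\Expec\bigl[2^{Z_n}\bigr]=\prod_{i=1}^n(1+1/i)=n+1$ computed from your representation---and your exponent $c=\eta^2/(2+\eta)\to 9/5>1$ does suffice for the union bound over $n$ matchings. Two small remarks: Theorem~\ref{Chernoff} is stated for $\Bin(n,p)$, i.e.\ identically distributed summands, so you should note that it extends verbatim to sums of independent heterogeneous Bernoullis (it does, by the same moment-generating-function proof); and your treatment of fixed points of $\tau_k$ as trivial cycles of length~$1$ is exactly the convention the paper adopts later (in the appendix on the third-moment computation), so that point is not an obstacle.
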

	
	\begin{proof}
		Let $\mathcal M$ be a collection of $r \le n$ edge-disjoint perfect matchings in $B$, and observe that every $M\in \mathcal M$ is translated into a $1$-factor in $D'$.
		Moreover, for every $M\neq M' \in \mathcal M$, since $M$ and $M'$ are edge-disjoint, it follows that they are being translated to edge-disjoint $1$-factors in $D'$.
		
		Next, observe that for a given $M\in \mathcal M$, by choosing $\pi\in S_n$ uniformly at random we have that the $1$-factor that $M$ is being mapped to in $D'$ corresponds to the cyclic form of a uniformly chosen permutation.
		Denote by $\sigma(\pi)$ and $\sigma(M)$ the numbers of cycles in $\pi$ and in $M$, repsectively.
		We use the simple and very nice fact from~\cite{ford2021cycle} stating that $\mathbb E\left[2^{\sigma(\pi)} \right] = n+1$.
		Therefore, by Markov inequality, we have that
		\[\Pr\left[\sigma(M) \ge 4\log n \right] \le \Pr\left[2^{\sigma(\pi)} \ge (n+1)^3 \right] \le \frac{1}{(n+1)^2}. \]
		Taking a union bound over all $M\in \mathcal M$ concludes the proof.
	\end{proof}
	
	By combining \Cref{lem:mindegDnp,lem:disjointPM,lem:fewcycles} we get that with high probability the digraph $D'$ obtained from Procedure~\ref{procedure:D} contains $\delta = \delta^{\pm}(D_{n,p})$ many edge-disjoint $1$-factors.
	Moreover, there exists a collection $\mathcal F$ of $\delta$ edge-disjoint $1$-factors in $D'$ such that each of which has at most $4\log n$ cycles.
	
	The following is an immediate corollary of \Cref{lem:fewcycles}, given a simple pigeonhole argument.
	\begin{corollary}
		\label{lem:longcycle}
		With high probability, each $1$-factor $F\in \mathcal F$ contains a cycle of length at least $\frac{n}{4\log n}$.
	\end{corollary}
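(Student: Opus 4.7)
The plan is simply to invoke \Cref{lem:fewcycles} and apply a pigeonhole argument inside each $1$-factor. By \Cref{lem:fewcycles}, with high probability every $F \in \mathcal F$ is a $1$-factor of the digraph $D'$ on $n$ vertices consisting of at most $4\log n$ vertex-disjoint directed cycles. Since these cycles partition $V(D') = [n]$, the sum of their lengths equals $n$, so the average cycle length is at least $n/(4\log n)$, and in particular the longest cycle in $F$ has length at least $n/(4\log n)$.

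More precisely, I would condition on the high-probability event from \Cref{lem:fewcycles} that every $F \in \mathcal F$ decomposes into at most $4\log n$ cycles. Fix any such $F$ and write $F = C_1 \cup \cdots \cup C_k$ with $k \le 4\log n$ and $\sum_{i=1}^k |C_i| = n$. Then
\[
\max_{1 \le i \le k} |C_i| \;\ge\; \frac{1}{k} \sum_{i=1}^k |C_i| \;=\; \frac{n}{k} \;\ge\; \frac{n}{4\log n},
\]
which gives the desired long cycle inside $F$. Since the underlying event already holds with high probability uniformly over all $F \in \mathcal F$, no further union bound is needed beyond the one already performed in \Cref{lem:fewcycles}.

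There is essentially no obstacle here: the statement is an immediate averaging consequence of the cycle-count bound established in the previous lemma, and the only care required is to make sure we are inheriting the correct high-probability event from \Cref{lem:fewcycles} rather than reproving it.
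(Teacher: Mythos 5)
Your proposal is correct and is exactly the argument the paper intends: the corollary is stated as an immediate consequence of \Cref{lem:fewcycles} via pigeonhole, and your averaging over the at most $4\log n$ cycles partitioning the $n$ vertices is precisely that argument. No gap; the conditioning on the high-probability event from \Cref{lem:fewcycles} is handled correctly.
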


	Before moving to the next phase, we need one more property of the digraph obtained at the end of Phase $1$, in order to avoid exposing those edges which already appear in the graph $D'$.
	Thus, another important property of our digraph $D'$ consists of the following definition.
	\begin{definition}
		\label{def:heavyvtx}
		Given a digraph $D$ on $n$ vertices, a subset $X \subset V(D)$ and a real number $c \in [0,1]$, we say that a vertex $v \in X$ is \emph{$(X,c)$-heavy} if it has either at least $c|X|$ many out-neighbours or at least $c|X|$ many in-neighbours in $X$.
	\end{definition}
	We will use the following simple proposition.
    \begin{proposition}
	\label{prop:heavyvtx}
		Let $B = (X \cup Y, E)$ be a bipartite graph with $|X| = |Y| = n$ and with maximum degree at most $2np$ for some $0 \le p \le \varepsilon$, and suppose that $B$ contains a collection $\cM$ of $r \le n$ edge-disjoint perfect matchings.
		Let $\pi \in S_n$ be a permutation chosen uniformly at random, and consider $D' \coloneqq D_{\pi}(B)$.
		Then $D'$ contains a collection $\cF$ of $r$ edge-disjoint $1$-factors and with high probability contains no $\left(C, \frac{1}{9} \right)$-heavy vertices for all cycles $C$ of length at least $\frac{n}{\log^3 n}$ in all $1$-factors in $\cF$.
	\end{proposition}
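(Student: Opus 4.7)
The plan is to show that for each $M \in \cM$ and each $v \in [n]$, the probability that $v$ is $(C_M(v), 1/9)$-heavy in $D'$ while $|C_M(v)| \ge n/\log^3 n$ is $\exp(-\Omega(n/\log^3 n))$, where $C_M(v)$ denotes the cycle of $v$ in the $1$-factor $F_M := D_\pi(M)$; a union bound over the at most $n^2$ pairs $(M,v)$ will then finish the proof. Note that $\cF := \{D_\pi(M) : M \in \cM\}$ is automatically a family of $r$ edge-disjoint $1$-factors since the matchings in $\cM$ are edge-disjoint.

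Let $\tau_M \in S_n$ be the bijection defined by $x_i y_{\tau_M(i)} \in M$, so that $F_M$ is precisely the cycle decomposition of $\pi \circ \tau_M$. The key observation is that $V(C_M(v))$ is $(\pi\circ\tau_M)$-invariant, hence $\pi^{-1}(V(C_M(v))) = \tau_M(V(C_M(v)))$; combined with $N^+_{D'}(v) = \pi(N_B(x_v)) \setminus \{v\}$ this gives
\[
\bigl| N^+_{D'}(v) \cap V(C_M(v)) \bigr| = \bigl| \tau_M^{-1}(N_B(x_v)) \cap V(C_M(v)) \bigr|,
\]
and the set $A := \tau_M^{-1}(N_B(x_v))$ on the right is deterministic (independent of $\pi$) with $|A| \le \deg_B(x_v) \le 2np$. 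Since $\pi\circ\tau_M$ is uniform on $S_n$, the standard cycle statistics of a uniform random permutation yield $\Pr[|C_M(v)|=s] = 1/n$ and, conditionally on $|C_M(v)|=s$, that $V(C_M(v))$ is uniform over $s$-subsets of $[n]$ containing $v$. Hence $|A \cap V(C_M(v))|$ given $|C_M(v)|=s$ is hypergeometric, and a crude upper-tail estimate (via stochastic domination by $\mathrm{Bin}(|A|, s/n)$) gives
\[
\Pr\!\left[ |A \cap V(C_M(v))| \ge s/9 \,\bigm|\, |C_M(v)|=s \right] \le (18ep)^{s/9}.
\]
Since $p \le \varepsilon$ is chosen so that $18ep < 1$, the right-hand side is $\exp(-\Omega(s))$; multiplying by $1/n$ and summing over $s \ge n/\log^3 n$ yields $\exp(-\Omega(n/\log^3 n))$ for out-heaviness.

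For in-neighbors, we condition additionally on the $F_M$-predecessor of $v$. Since $N^-_{D'}(v) = N_B(y_{\pi^{-1}(v)})\setminus\{v\}$ and the predecessor of $v$ in $F_M$ is $u = \tau_M^{-1}(\pi^{-1}(v))$, fixing $u$ (equivalently, $\pi^{-1}(v) = \tau_M(u)$) makes $N^-_{D'}(v) = N_B(y_{\tau_M(u)}) \setminus \{v\}$ a deterministic set of size at most $2np$. A refinement of the permutation statistics shows that, conditionally on predecessor $u$ and $|C_M(v)| = s$, the set $V(C_M(v))$ is uniform over $s$-subsets of $[n]$ containing both $u$ and $v$, so the same hypergeometric tail applies. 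Summing over $u \ne v$ (with joint probability $1/(n(n-1))$ per pair $(u,s)$) the in-neighbor contribution is also $o(1)$ per pair $(M,v)$, and the final union bound over $M \in \cM$ and $v \in [n]$ completes the argument.

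The main technical point is verifying the uniform-subset distributional claims and their refinement when conditioning on the predecessor; these are classical facts about the cycle through a fixed element of a uniform random permutation, transported via the fact that $\pi\circ\tau_M$ is uniform whenever $\pi$ is. The smallness of $p \le \varepsilon$ is exactly what makes the hypergeometric tail $\exp(-\Omega(n/\log^3 n))$ beat the $n^2$-fold union bound.
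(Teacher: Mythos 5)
Your proof is correct and follows essentially the same route as the paper's: a first-moment/union bound on the cycle through $v$, using the uniformity of $\pi\circ\tau_M$ together with the degree bound $2np$, with the smallness of $\varepsilon$ making the per-cycle-length probability $\left(O(p)\right)^{t/9}\cdot\frac1n$ beat the union bound over lengths, vertices and matchings. Your packaging via the exact conditional law of the cycle (length uniform on $[n]$, vertex set uniform given the length, hypergeometric intersection with the deterministic set $\tau_M^{-1}(N_B(x_v))$) is a slightly cleaner version of the paper's direct count $\binom{2np}{t/9}\binom{n}{8t/9}\big/\bigl(t\binom{n}{t}\bigr)$, and your explicit conditioning on the predecessor to handle in-neighbours is a detail the paper leaves implicit.
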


    \begin{proof}
		Let $\cM = \{M_1,\ldots,M_r\}$ be the collection of $r$ edge-disjoint perfect matchings in $B$, and let $v \in X$.
        Let $\cF = \{F_1, \ldots, F_r \}$ be the collection of $r$ edge-disjoint $1$-factors in $D'$ such that for each $i \in [r]$ the perfect matching $M_i$ was mapped to the $1$-factor $F_i$ under $\pi$.
        Given $i\in [r]$, we want to bound from above the probability that $F_i$ contains a cycle $C$ of length $|C|=t\ge \frac{n}{\log^3 n}$, such that $v$ has at least $\frac{1}{9}t$ neighbors in $C$.
        We will then take a union bound over all vertices and all matchings (or $1$-factors).

        Fix $t\ge \frac{n}{\log^3 n}$.
        There are at most $\binom{\deg(v)}{t/9} \le \binom{2np}{t/9}$ many ways to choose $\frac{1}{9}t$ neighbours of $v$ in $Y$, and at most $\binom{n}{8t/9}$ many ways to choose the remaining vertices in $C$.
        Let $T_i$ denote this set of $t$ vertices chosen in $Y$, and let $S_i \subseteq X$ be their neighbors in $M_i$.
        The probability that $T_i\cup S_i$ is mapped into a cycle under $\pi$ is at most $\frac{(t-1)!}{\binom{n}{t}t!}=\frac{1}{t\binom{n}{t}}$. Therefore, the probability that $v$ is $(C,\frac 19)$-heavy for some $C$ of length $t$ in $F_i$ is at most
        \begin{align*}
        	\frac{\binom{2np}{t/9} \binom{n}{8t/9}}{t\binom{n}{t}} \le \frac{\left(\frac{2enp}{t/9} \right)^{\frac{1}{9}t} \left(\frac{en}{8t/9} \right)^{\frac{8}{9}t}}{t \left(\frac{n}{t} \right)^t} = \frac{1}{t} \left(\frac{2(9e)^9}{8^8} \cdot p \right)^{\frac{1}{9}t} = o\left(\frac{1}{n^3} \right),
        \end{align*}
        where the last inequality follows as $t \ge \frac{n}{\log^3 n}$.
        Taking a union bound over all $\frac{n}{\log^3 n} \le t \le n$, and over all vertices and matchings, completes the proof.
	\end{proof}

	
	\section{\texorpdfstring{Phase 2: Generating $D''$ while converting $1$-factors into Hamilton cycles}{Phase 2: Generating D'' while converting 1-factors into Hamilton cycles}}
	
	Here we formally describe Phase 2 from the proof outline. 
	Our goal is to convert the family of (edge-disjoint) $1$-factors from the previous section into a family of edge-disjoint Hamilton cycles using edges that we expose in an online manner.
	Given a probability parameter $q$, when we say that an edge is being \emph{exposed} we mean that we perform a Bernoulli trial with probability $q$ for success to decide whether this edge is going to be added to our graph or not.
	When we say that an edge has been \emph{successfully exposed} we mean that the corresponding Bernoulli trial ended up as a ``success'' and the edge has been added as an edge to our graph.
	All the Bernoulli trials are independent.
	It is important to note that we may expose the same edge several times, and therefore, in order to show that the digraph that we end up with can be coupled as a subgraph of $D_{n,p_1}$, we will show that with high probability no edge has been exposed ``too many'' times.
	
	The initial set of edges that we expose is the set of available edges of $D'$ at the end of Phase $1$,
	\[E' \coloneqq \left\{\overrightarrow{uv} ~:~ u \in [n]\setminus \{x^+ \}, \; v \in [n]\setminus \{\pi(y^-) \} \right\} \setminus E(D'). \]
	Throughout Phase $2$ we will update the set $E'$ from which we expose edges.
	Given a vertex $u \in V$ and subsets of vertices $A, B \subset V$ and a set of available edges $E'$, we write $E'(u,A) \coloneqq E(u,A) \cap E'$, $E'(A,u) \coloneqq E(A,u) \cap E'$ and $E'(A,B) = E(A,B) \cap E'$.

	\subsection{Path rotations and online sprinkling}	
	Here we give an exact description of how to perform what we call ``double rotations'' to a given path $P$, using (currently) available edges that we expose in an online fashion (that is, exposing edges only when needed).
	During this exposure we allow an edge to be exposed multiple times, and at the end we show that it is in fact unlikely that we successfully expose an edge more than once.
	
	Let $P = (u_1, \ldots, u_m)$ be a directed path on $m$ vertices, for some integer $m \ge 5$.
	Inspired by the \emph{double-rotation} argument that was introduced by Frieze~\cite{frieze1988algorithm}, we now describe how to rotate a directed path $P$ from each of its endpoints separately, to obtain a new path.
	Define
	\begin{align*}
	V_1 &\coloneqq V_1(P) = \left\{u_i ~:~ 1\le i < \frac{m}{4} \right\} \\
	V_2 &\coloneqq V_2(P) = \left\{u_j ~:~ \frac{m}{4} \le j < \frac{m}{2} \right\} \\
	V_3 &\coloneqq V_3(P) = \left\{u_s ~:~ \frac{m}{2} < s \le \frac{3m}{4} \right\} \\
	V_4 &\coloneqq V_4(P) = \left\{u_t ~:~ \frac{3m}{4} < t \le m \right\}.
	\end{align*}
	
	\paragraph*{Left-rotation of $P$.}
	Given $x\in V_1$ and $y\in V_2$, define $P_{\ell} \coloneqq \left(P\setminus \left\{\overrightarrow{xx_+},\overrightarrow{yy_+}\right\} \right)\cup\left\{\overrightarrow{yu_1},\overrightarrow{xy_+}\right\}$.
	We say that $P_{\ell}$ is obtained from $P$ by a \emph{double left-rotation} (or \emph{left-rotation} for short) with pivots $(y,x)$. 
	Observe that $P_{\ell}$ is a path (see \Cref{fig:pathrotation}) such that $V(P_{\ell})=V(P)$, $x_+$ is its left endpoint, and $V_1(P_{\ell})\cup V_2(P_{\ell})=V_1(P)\cup V_2(P)$.
	Moreover, performing a left-rotation to $P$ does not change its right half.
	
	\paragraph*{Right-rotation of $P$.}
	Given $z \in V_4$ and $w \in V_3$, define $P_r \coloneqq \left(P \setminus \left\{\overrightarrow{z_-z}, \overrightarrow{w_-w}\right\} \right) \cup \left\{\overrightarrow{u_mw},\overrightarrow{w_-z} \right\}$.
	We say that $P_r$ is obtained from $P$ by a \emph{double right-rotation} (or \emph{left-rotation} for short) with pivots $(z,w)$.
	Observe that $P_r$ is a path (see \Cref{fig:pathrotation}) such that $V(P_r) = V(P)$, $z_-$ is its right endpoint, and $V_3(P_r) \cup V_4(P_r) = V_3(P) \cup V_4(P)$.
	Moreover, performing a right-rotation to $P$ does not change its left half.
	
	\begin{remark}
		We remark that left- and right-rotations can also be defined for vertices of $P$ which are not necessarily chosen from a specific $V_i$.
		However, for technical reasons which will be explained below, considering only vertices from $V_1, V_2$ and from $V_3, V_4$ for left- and right-rotations, respectively, makes our proof simpler.
	\end{remark}

	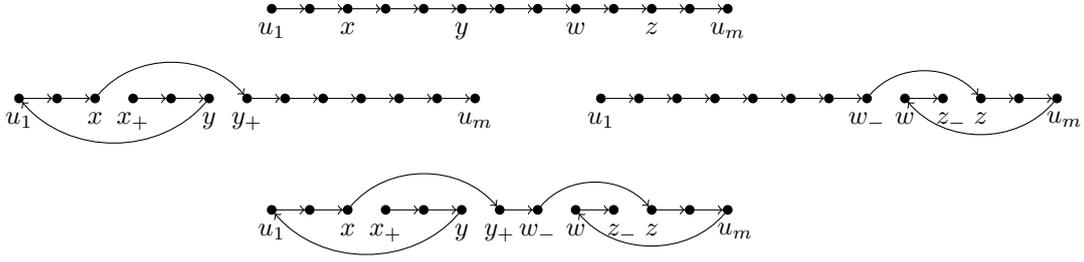
\begin{figure}
		\centering
		\begin{subfigure}[b]{0.5\textwidth}
			\begin{tikzpicture}[auto, vertex/.style={circle,draw=black!100,fill=black!100, thick,inner sep=0pt,minimum size=1mm}]
			\node (u1) at (-3,0) [vertex,label=below:$u_1$] {};
			\node (u2) at (-2.5,0) [vertex] {};
			\node (x) at (-2,0) [vertex,label=below:$x$] {};
			\node (x+) at (-1.5,0) [vertex] {};
			\node (u5) at (-1,0) [vertex] {};
			\node (y) at (-0.5,0) [vertex,label=below:$y$] {};
			\node (y+) at (0,0) [vertex] {};
			\node (w-) at (0.5,0) [vertex] {};
			\node (w) at (1,0) [vertex,label=below:$w$] {};
			\node (z-) at (1.5,0) [vertex] {};
			\node (z) at (2,0) [vertex,label=below:$z$] {};
			\node (u12) at (2.5,0) [vertex] {};
			\node (um) at (3,0) [vertex,label=below:$u_m$] {};
			\node[text width=1cm] at (0.25,1.5) {$P$};
			
			\draw [->] (u1) --node[inner sep=0pt,swap]{} (u2);
			\draw [->] (u2) --node[inner sep=2pt,swap]{} (x);
			\draw [->] (x) --node[inner sep=2pt,swap]{} (x+);
			\draw [->] (x+) --node[inner sep=2pt,swap]{} (u5);
			\draw [->] (u5) --node[inner sep=2pt,swap]{} (y);
			\draw [->] (y) --node[inner sep=2pt,swap]{} (y+);
			\draw [->] (y+) --node[inner sep=2pt,swap]{} (w-);
			\draw [->] (w-) --node[inner sep=2pt,swap]{} (w);
			\draw [->] (w) --node[inner sep=2pt,swap]{} (z-);
			\draw [->] (z-) --node[inner sep=2pt,swap]{} (z);
			\draw [->] (z) --node[inner sep=2pt,swap]{} (u12);
			\draw [->] (u12) --node[inner sep=2pt,swap]{} (um);
			\end{tikzpicture}
			\label{subfig:norotation}
		\end{subfigure}
		\begin{subfigure}[b]{1\textwidth}
			\begin{minipage}{.5\textwidth}
				\centering
				\begin{tikzpicture}[auto, vertex/.style={circle,draw=black!100,fill=black!100, thick,inner sep=0pt,minimum size=1mm}]
				\node (u1) at (-3,0) [vertex,label=below:$u_1$] {};
				\node (u2) at (-2.5,0) [vertex] {};
				\node (x) at (-2,0) [vertex,label=below:$x$] {};
				\node (x+) at (-1.5,0) [vertex,label=below:$x_+$] {};
				\node (u5) at (-1,0) [vertex] {};
				\node (y) at (-0.5,0) [vertex,label=below:$y$] {};
				\node (y+) at (0,0) [vertex,label=below:$y_+$] {};
				\node (w-) at (0.5,0) [vertex] {};
				\node (w) at (1,0) [vertex] {};
				\node (z-) at (1.5,0) [vertex] {};
				\node (z) at (2,0) [vertex] {};
				\node (u12) at (2.5,0) [vertex] {};
				\node (um) at (3,0) [vertex,label=below:$u_m$] {};

				\draw [->] (u1) --node[inner sep=0pt,swap]{} (u2);
				\draw [->] (u2) --node[inner sep=2pt,swap]{} (x);
				\draw [->] (x+) --node[inner sep=2pt,swap]{} (u5);
				\draw [->] (u5) --node[inner sep=2pt,swap]{} (y);
				\draw [->] (y+) --node[inner sep=2pt,swap]{} (w-);
				\draw [->] (w-) --node[inner sep=2pt,swap]{} (w);
				\draw [->] (w) --node[inner sep=2pt,swap]{} (z-);
				\draw [->] (z-) --node[inner sep=2pt,swap]{} (z);
				\draw [->] (z) --node[inner sep=2pt,swap]{} (u12);
				\draw [->] (u12) --node[inner sep=2pt,swap]{} (um);
				\draw [->]   (y) to[out=-130,in=-50] (u1);
				\draw [->]   (x) to[out=50,in=130] (y+);
				\end{tikzpicture}
			\end{minipage}%
			\begin{minipage}{.5\textwidth}
				\centering
				\begin{tikzpicture}[auto, vertex/.style={circle,draw=black!100,fill=black!100, thick,inner sep=0pt,minimum size=1mm}]
				\node (u1) at (-3,0) [vertex,label=below:$u_1$] {};
				\node (u2) at (-2.5,0) [vertex] {};
				\node (x) at (-2,0) [vertex] {};
				\node (x+) at (-1.5,0) [vertex] {};
				\node (u5) at (-1,0) [vertex] {};
				\node (y) at (-0.5,0) [vertex] {};
				\node (y+) at (0,0) [vertex] {};
				\node (w-) at (0.5,0) [vertex,label=below:$w_-$] {};
				\node (w) at (1,0) [vertex,label=below:$w$] {};
				\node (z-) at (1.5,0) [vertex,label={[shift={(0.11,-0.61)}]$z_-$}] {};
				\node (z) at (2,0) [vertex,label=below:$z$] {};
				\node (u12) at (2.5,0) [vertex] {};
				\node (um) at (3,0) [vertex,label={[shift={(0.1,-0.57)}]$u_m$}] {};

				\draw [->] (u1) --node[inner sep=0pt,swap]{} (u2);
				\draw [->] (u2) --node[inner sep=2pt,swap]{} (x);
				\draw [->] (x) --node[inner sep=2pt,swap]{} (x+);
				\draw [->] (x+) --node[inner sep=2pt,swap]{} (u5);
				\draw [->] (u5) --node[inner sep=2pt,swap]{} (y);
				\draw [->] (y) --node[inner sep=2pt,swap]{} (y+);
				\draw [->] (y+) --node[inner sep=2pt,swap]{} (w-);
				\draw [->] (w) --node[inner sep=2pt,swap]{} (z-);
				\draw [->] (z) --node[inner sep=2pt,swap]{} (u12);
				\draw [->] (u12) --node[inner sep=2pt,swap]{} (um);
				\draw [->]   (um) to[out=-130,in=-50] (w);
				\draw [->]   (w-) to[out=50,in=130] (z);
				\end{tikzpicture}
			\end{minipage}
			\label{subfig:leftrightrotation}
		\end{subfigure}
		\begin{subfigure}[b]{0.5\textwidth}
			\begin{tikzpicture}[auto, vertex/.style={circle,draw=black!100,fill=black!100, thick,inner sep=0pt,minimum size=1mm}]
			\node (u1) at (-3,0) [vertex,label=below:$u_1$] {};
			\node (u2) at (-2.5,0) [vertex] {};
			\node (x) at (-2,0) [vertex,label=below:$x$] {};
			\node (x+) at (-1.5,0) [vertex,label=below:$x_+$] {};
			\node (u5) at (-1,0) [vertex] {};
			\node (y) at (-0.5,0) [vertex,label=below:$y$] {};
			\node (y+) at (0,0) [vertex,label=below:$y_+$] {};
			\node (w-) at (0.5,0) [vertex,label=below:$w_-$] {};
			\node (w) at (1,0) [vertex,label=below:$w$] {};
			\node (z-) at (1.5,0) [vertex,label={[shift={(0.11,-0.61)}]$z_-$}] {};
			\node (z) at (2,0) [vertex,label=below:$z$] {};
			\node (u12) at (2.5,0) [vertex] {};
			\node (um) at (3,0) [vertex,label={[shift={(0.1,-0.57)}]$u_m$}] {};

			\draw [->] (u1) --node[inner sep=0pt,swap]{} (u2);
			\draw [->] (u2) --node[inner sep=2pt,swap]{} (x);
			\draw [->] (x+) --node[inner sep=2pt,swap]{} (u5);
			\draw [->] (u5) --node[inner sep=2pt,swap]{} (y);
			\draw [->] (y+) --node[inner sep=2pt,swap]{} (w-);
			\draw [->] (w) --node[inner sep=2pt,swap]{} (z-);
			\draw [->] (z) --node[inner sep=2pt,swap]{} (u12);
			\draw [->] (u12) --node[inner sep=2pt,swap]{} (um);
			\draw [->]   (um) to[out=-130,in=-50] (w);
			\draw [->]   (y) to[out=-130,in=-50] (u1);
			\draw [->]   (w-) to[out=50,in=130] (z);
			\draw [->]   (x) to[out=50,in=130] (y+);
			\end{tikzpicture}
			\label{subfig:doublerotation}
		\end{subfigure}
		\caption{A path $P$, left- and right-rotations of it with pivots $(x,y)$ and $(w,z)$, respectively, and a path obtained by rotations from both sides.}
		\label{fig:pathrotation}
	\end{figure}

	\paragraph*{Online sprinkling left rotations.}
	Given a path $P = (u_1, \ldots, u_m)$ in a digraph $D'$ on $n$ vertices and a subset of available edges $E'$, do the following.
	\begin{enumerate}
		\item Expose all edges in the set $E'\left(V_2, u_1 \right)$ independently with probability $\frac{100\log n}{m}$.
		Let $\IN{u_1}{V_2}$ be the in-neighbourhood of $u_1$ in $V_2$ in the digraph spanned by the successfully exposed edges.
		\item For each $y \in \IN{u_1}{V_2}$ expose all edges in the set $E'\left(V_1, y_+ \right)$ independently with probability $\frac{100\log n}{m}$.
		Denote by $\IN{y_+}{V_1}$ the in-neighbourhood of $y_+$ in $V_1$ in the digraph spanned by the successfully exposed edges.
	\end{enumerate}
	For each $y \in \IN{u_1}{V_2}$ and $x \in \IN{y_+}{V_1}$ apply left-rotation to $P$ with pivots $(x,y)$, and let $\cP_{\ell}^1 \coloneqq \cP_{\ell}^1(P)$ be the family of obtained paths.
	Moreover, denote by
	\[\END_{\ell}^1 \coloneqq \END_{\ell}^1(P) = \bigcup_{y \in \IN{u_1}{V_2}} \left(\IN{y_+}{V_1} \right)_+ \]
	the set of left endpoints of paths in $\cP_{\ell}^1$.
	Observe that for every element in $\END_{\ell}^1$ there exists a path in $\cP_{\ell}^1$ for which it is its left endpoint.
	
	\paragraph*{Online sprinkling right rotations.}
	Given a path $P = (u_1, \ldots, u_m)$ in a digraph $D'$ and a subset of available edges $E'$, we do the following.
	\begin{enumerate}
		\item Expose all edges in the set $E'\left(u_m, V_3 \right)$ independently with probability $\frac{100\log n}{m}$.
		Let $\OUT{u_m}{V_3}$ be the out-neighbourhood of $u_m$ in $V_3$ in the digraph spanned by the successfully exposed edges.
		\item For each $w \in \OUT{u_m}{V_3}$ expose all edges in the set $E'\left(w_-,V_4 \right)$ independently with probability $\frac{100\log n}{m}$.
		Denote by $\OUT{w_-}{V_4}$ the out-neighbourhood of $w_-$ in $V_4$ in the digraph spanned by the successfully exposed edges.
	\end{enumerate}
	For each $w \in \OUT{u_m}{V_3}$ and $z \in \OUT{w_-}{V_4}$ apply right-rotation to $P$ with pivots $(w,z)$, and let $\cP_r^1 \coloneqq \cP_r^1(P)$ be the family of obtained paths.
	Moreover, denote by
	\[\END_r^1 \coloneqq \END_r^1(P) = \bigcup_{w \in \OUT{u_m}{V_3}} \left(\OUT{w_-}{V_4} \right)_- \]
	the set of right endpoints of paths in $\cP_r^1$.
	Observe that for every element in $\END_r^1$ there exists a path in $\cP_r^1$ for which it is its right endpoint.

	\paragraph*{Rotating $P$ from both sides using online sprinkling.}
	In a digraph $D'$, let $P = (u_1, \ldots, u_m)$ be a path, $E'$ be a subset of available edges 
	and $(t_{\ell}, t_r)$ be a tuple of positive integers.
	Denote $\END_{\ell}^0 \coloneqq \{u_1\}$, $\END_r^0 \coloneqq \{u_m\}$ and $\mathcal P_{\ell}^0 = \mathcal P_r^0 \coloneqq \{P\}$.
	For each $1 \le t'_{\ell} \le t_{\ell}$ and for each $1 \le t'_r \le t_r$ we do the following.
	\begin{enumerate}
		\item \textbf{Left rotations:} For each $u \in \END_{\ell}^{t'_{\ell}-1}$ let $P^u \in \mathcal P_{\ell}^{t'_{\ell}-1}$ be a path for which $u$ is its left endpoint.
		Perform online sprinkling left rotations to $P^u$ with $E'$ to obtain the sets $\mathcal P_{\ell}^1(P^u)$ and $\END_{\ell}^1(P^u)$ of all obtained paths and their corresponding left endpoints, respectively.
		\item If $\bigcup_{u\in \mathcal \END_{\ell}^{t'_{\ell}-1}} \mathcal P_{\ell}^1(P^u) = \emptyset$ let $\mathcal P_{\ell}^{t'_{\ell}} = \mathcal P_{\ell}^{t'_{\ell}-1}$ and $\END_{\ell}^{t'_{\ell}} = \END_{\ell}^{t'_{\ell}-1}$.
		Otherwise, let $\mathcal P^{t'_{\ell}}_{\ell} \coloneqq \bigcup_{u \in \mathcal \END^{t'_{\ell}-1}_{\ell}} \mathcal P_{\ell}^1(P^u)$, and $\END_{\ell}^{t'_{\ell}} \coloneqq \bigcup_{u \in \mathcal \END^{t'_{\ell}-1}_{\ell}} \END_{\ell}^1(P^u)$.
		\item \textbf{Right rotations:} For each $u \in \END_r^{t'_r-1}$ let $P^u \in \mathcal P_r^{t'_r-1}$ be a path for which $u$ is its right endpoint.
		Perform online sprinkling right rotations to $P^u$ with $E'$ to obtain the sets $\mathcal P_r^1(P^u)$ and $\END_r^1(P^u)$ of all obtained paths and their corresponding right endpoints, respectively.
		\item If $\bigcup_{u \in \END_r^{t'_r-1}} \mathcal P_r^1(P^u) = \emptyset$ let $\mathcal P_r^{t'_r} = \mathcal P_r^{t'_r-1}$ and $\END_r^{t'_r} = \END_r^{t'_r-1}$.
		Otherwise, let $\mathcal P_r^{t'_r} \coloneqq \bigcup_{u\in \END_r^{t'_r-1}} \mathcal P_r^1(P^u)$, and $\END_r^{t'_r} \coloneqq \bigcup_{u \in \END_r^{t'_r-1}} \END_r^1(P^u)$.
	\end{enumerate}
	After performing $t_{\ell}$ and $t_r$ iterations of left and right rotations,  respectively, with online sprinkling, we obtain the families of paths $\mathcal P_{\ell}^{t_{\ell}}, \mathcal P_r^{t_r}$ and the sets of their left and right endpoints $\END_{\ell}^{t_{\ell}}$ and $\END_r^{t_r}$, respectively.
	
	Observe that for every $x\in \END_{\ell}^{t_{\ell}}$ and every $y\in \END_r^{t_r} $ there exists a path $P'$ such that $x$ and $y$ are its left and right endpoints, respectively, and which is obtained from $P$ by performing left- anf right-rotations.
	Therefore, any edge in $\END_r^{t_r}\times \END_{\ell}^{t_{\ell}}$ completes $P'$ into a cycle on the vertex set $V(P)$.
	In order to show that such an edge typically exists when we expose edges in this set, it would be convenient for us to show that these sets are relatively large.  
	
	In the next lemma we assume the following.
	Let $D'=(V,E)$ be a digraph on $n$ vertices, let $P = (u_1, \ldots, u_m)$ be a path of length $m \coloneqq m(n) \ge \frac{n}{4\log n}$ in $D'$ and let $E':=(V\times V)\setminus E$ be the set of all available edges in $D'$.
	
	\begin{lemma}
		\label{lem:ManyRotationPaths}
		Let $\frac{\log^{15}n}{n} \le p \in [0,1]$ and $q = \sqrt{\frac{p}{n \log^8 n}}$, and assume that $P$ contains no $\left(P, \frac{1}{8} \right)$-heavy vertices.
		Then with probability $1-o\left(\frac{1}{n \log n} \right)$ the following hold.
		There exist integers $t_{\ell} \coloneqq t_{\ell}(P)$ and $t_r \coloneqq t_r(P)$ satisfying $t_{\ell}, t_r \le \frac{\log n}{4\log\log n}$ and such that after performing online sprinkling double rotations to $P$ with $E'$ and $(t_{\ell},t_r)$, we have
		\begin{align}
		\label{eq:ENDq}
		\left|\END_{\ell}^{t_{\ell}} \right|, \left|\END_r^{t_r} \right| = \Omega\left(\tfrac{\log n}{\sqrt{q}} \right).
		\end{align}
	\end{lemma}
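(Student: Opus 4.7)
The plan is to show that at each iteration the current endpoint-set size grows by a factor of $\Omega(\log^2 n)$, until it reaches the target $\log n/\sqrt{q}$. For $p \geq \log^{15}n/n$ one has $\log n/\sqrt{q} = O(\sqrt n/\log^{3/4}n)$, so starting from $|\END_\ell^0|=1$, at most $t^* := \log n/(4\log\log n)$ iterations suffice, since $(c \log^2 n)^{t^*} \geq \sqrt n$ for any constant $c > 0$ and large $n$. I set $t_\ell$ to be the first index at which the target is attained, so $t_\ell \leq t^*$.

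\emph{Growth from a single endpoint.} Fix $u \in \END_\ell^{t'-1}$ with rotated path $P^u$. Since left-rotations preserve $V(P)$, the no-$(P, 1/8)$-heavy hypothesis transfers to $P^u$; in particular, $u$ has at most $m/8$ in-neighbours in $V_2(P^u) \subseteq V(P)$, so $|E'(V_2(P^u), u)| \geq |V_2(P^u)| - m/8 \geq m/8$. Hence $|\IN{u}{V_2}|$ is binomial with mean $\geq 12.5\log n$ and is $\geq 10\log n$ with probability $1 - e^{-\Omega(\log n)}$ by \Cref{Chernoff}. Conditionally on $\IN{u}{V_2}$, the edge-sets $E'(V_1(P^u), y_+)$ for distinct $y$ target distinct $y_+$ and each has size $\geq m/8$ by non-heaviness of $y_+$; Chernoff together with a union bound over the $\leq m$ choices of $y$ gives $|\IN{y_+}{V_1}| \geq 10\log n$ for every $y$ with failure probability $e^{-\Omega(\log n)}$. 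Thus $u$ generates a multiset of $\geq 100\log^2 n$ new left-endpoints.

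\emph{Collision control and iteration.} Let $k := |\END_\ell^{t'-1}|$, working in the regime $k \leq k^* := C\log n/\sqrt{q}$. New candidates lie in $V_1(P) \cup V_2(P)$, whose size is at least $m/2 \geq n/(8\log n) = \omega(k^* \log^2 n)$. For each $x$ in this universe the probability of being produced by a fixed $u$ is $O(\log^2 n/m)$, so its expected multiplicity across the $k$ starting endpoints is $O(k\log^2 n/m) = o(1)$ for $k \leq k^*$. Consequently the expected number of collision triples is $O(k^2 \log^4 n/m) = o(k\log^2 n)$, and combining with the per-endpoint Chernoff bounds (applied simultaneously to all $k$ endpoints by a union bound) yields $|\END_\ell^{t'}| \geq 50 k \log^2 n$ with probability $1 - e^{-\Omega(\log n)}$ via standard concentration for low-degree polynomials in independent Bernoullis. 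Iterating over $t \leq t^*$ steps brings the size up to $k^*$.

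\emph{Union bound and the symmetric case.} A union bound over the $\leq t^*$ iterations and the $\leq k^*$ active endpoints in each iteration yields total failure $t^* k^* \cdot e^{-\Omega(\log n)} = o(1/(n\log n))$, as required. The right-rotation analysis is entirely symmetric: out-neighbourhoods replace in-neighbourhoods, $V_3, V_4$ replace $V_2, V_1$, and right-rotations preserve the right half of $P^u$ so the no-heavy hypothesis still transfers. The main obstacle is obtaining sharp, exponential-in-$\log n$ concentration for the set-size through the iterations despite the shared randomness of the online-sprinkling exposures; I handle this by ``early stopping'' at $k = k^*$, which keeps the universe-to-candidate-count ratio $\omega(1)$ at every stage and makes collisions statistically negligible.
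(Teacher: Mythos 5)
Your overall strategy coincides with the paper's: grow $\END_{\ell}^{t}$ by a factor $\Theta(\log^2 n)$ per round via Chernoff bounds on the binomial neighbourhood sizes (whose means are controlled by the no-heavy-vertex hypothesis together with the fact that left-rotations preserve $V_1(P)\cup V_2(P)$), control collisions so that the union is a constant fraction of the multiset of new endpoints, and stop once the target $\log n/\sqrt{q}$ is reached, which forces $t_{\ell}\le \frac{\log n}{4\log\log n}$. However, two quantitative steps do not work as written. First, your Chernoff thresholds are too aggressive: asking for $\abs{\IN{u}{V_2}}\ge 10\log n$ from a mean of $\ge 12.5\log n$ gives, by \Cref{Chernoff}, a per-endpoint failure probability of roughly $e^{-(0.2)^2\cdot 12.5\log n/2}=n^{-1/4}$. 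You must union-bound over up to $k^*=\Theta\bigl(\log n/\sqrt{q}\bigr)$ active endpoints, which for $p$ near $\log^{15}n/n$ is $n^{1/2-o(1)}$, so the accumulated failure probability is not even $o(1)$, let alone $o\bigl(1/(n\log n)\bigr)$. The paper settles for the much weaker lower bound $3\log n$ (from mean $\ge 12\log n$), obtaining per-endpoint failure probability $2n^{-27/8}$ while still retaining a growth factor $(2\log n)^2$; your thresholds need to be relaxed in the same way.

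Second, the collision control is where your argument genuinely falls short. You bound the \emph{expected} number of colliding pairs by $O(k^2\log^4 n/m)=o(k\log^2 n)$ and then invoke ``standard concentration for low-degree polynomials'' to claim $\abs{\END_{\ell}^{t'}}\ge 50k\log^2 n$ with failure probability $e^{-\Omega(\log n)}$. No off-the-shelf polynomial concentration bound delivers failure probability $o\bigl(1/(n\log n)\bigr)$ here, and a Markov bound certainly does not. The paper's mechanism is different and is the missing idea: it shows that with probability $1-o\bigl(1/(n\log^2 n)\bigr)$ \emph{no} vertex of $V_1(P)\cup V_2(P)$ is produced by three or more pivot pairs (a union bound over triples of producing pairs, summed over the $\le m$ target vertices --- in effect a third-moment computation, exploiting the independence of the online exposures), and then tolerates multiplicity-$2$ collisions deterministically at the cost of only a factor $2$ in $\abs{\END_{\ell}^{t}}$. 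You need to replace your first-moment-plus-concentration sketch with an argument of this type to reach the stated failure probability; with that and the corrected thresholds, the rest of your outline (early stopping at $k^*$, the count of iterations, and the symmetric right-rotation case) matches the paper's proof.
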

	
	The proof of \Cref{lem:ManyRotationPaths} is quite technical, thus we include it in \Cref{appendix:rotations}.

	\subsection{Merging two cycles}
	
	Here we describe our basic step of Phase $2$.
	That is, we show how to ``tailor'' two cycles in a given $1$-factor into one cycle consisting of the vertex set of their union.
	For doing so we will choose a \emph{designated} vertex from each cycle in each $1$-factor (more details about how we choose those designated vertices are given in \Cref{sec:desvxs}).
	Then, in the next section we show that it is possible to repeat the above ``tailoring'' procedure until each $1$-factor consists of a Hamilton cycle and they are all edge-disjoint.
	
	Let $C = (u_1, \ldots, u_b)$ and $C^* = (v_1, \ldots, v_a)$ be two cycles of lengths $b$ and $a$, respectively, and let $v_1 \in V(C^*)$ be a designated vertex of $C^*$.
	Let $E'$ be a set of available edges in $D'$ and $q \in [0,1]$ be an edge-exposure probability.
	In Procedure~\ref{procedure:2cycles} we describe our basic step, tailoring $C$ and $C^*$ together.
	\begin{algorithm}
		\caption{Merging $C$ and $C^*$}
		\label{procedure:2cycles}
		\textbf{INPUT}: $C = (u_1, \ldots, u_b), C^* = (v_1, \ldots, v_a), v_1, E', q$. \\
		\textbf{OUTPUT}: $C', E''$.
		\begin{algorithmic}[1]
			\STATE Let $E_1 \subset \left\{\overrightarrow{v_1u} \in E' ~:~ u_- \in V(C) \right\}$ be a subset of edges of size $|E_1| = \frac{b\log^7 n}{\sqrt{np}}$ chosen uniformly at random (if no such subset exists set $E_1 = \emptyset$).
			Expose all edges in $E_1$ independently at random with probability $q$.
			\STATE If neither of the edges in $E_1$ was successfully exposed output FAILURE.
			Otherwise, by re-labelling (if needed), assume that $\overrightarrow{v_1 u_1}$ was successfully exposed.
			\STATE Consider the path $P \coloneqq \left(v_2, \ldots, v_a, v_1, u_1, \ldots, u_b \right)$ which is obtained by deleting the edges $\overrightarrow{v_1v_2}, \overrightarrow{u_b u_1}$ and adding the edge $\overrightarrow{v_1u_1}$.
			\STATE Perform online sprinkling double rotations to $P$ with $E'$ and $H$ for $t_{\ell}$ and $t_r$ many times from left and right, respectively, where $t_{\ell}, t_r$ are minimal integers for which $\left|\END_{\ell}^{t_{\ell}} \right|, \left|\END_r^{t_r} \right| = \Omega\left(\frac{\log n}{\sqrt{q}} \right)$.
			If no such integers $t_{\ell}, t_r$ exist, output FAILURE.
			Otherwise, denote by $E^*$ the set successfully exposed edges.
			\STATE Expose all edges in the set $E_2 \coloneqq E'\left(\END_r^{t_r}, \END_{\ell}^{t_{\ell}} \right)$ independently with probability $q$.
			If no edge has been successfully exposed then output FAILURE.
			\STATE Let $\overrightarrow{yx} \in E_2$ be a successfully exposed edge and let $P'$ be a path with $x,y$ as left and right endpoints, respectively, as obtained by the above rotations of $P$.
			Let $C' = P' \cup \overrightarrow{yx}$ and $E'' = E'\setminus\left(E^* \cup \{\overrightarrow{v_1u_1}, \overrightarrow{yx}\} \right)$.
			\STATE Output $C'$ and $E''$.
		\end{algorithmic}
	\end{algorithm}
	
	\begin{lemma}
		\label{lem:join2cycles}
		Let $n$ be an integer and let $\frac{\log^{15}n}{n} \le p \le \varepsilon$ and $q = \sqrt{\frac{p}{n \log^8 n}}$.
		Let $C, C^*$ be two cycles of lengths $b$ and $a$, respectively, with $b\ge a$ and $b \ge \frac{n}{4\log n}$.
		Denote $V' \coloneqq V(C) \cup V(C^*)$.
		Let $E'$ be a set of available edges.
		And assume that the following hold.
		\begin{enumerate}
			\item $V'$ contains no $\left(V', \frac{1}{8} \right)$-heavy vertices.
			\item For any two subsets $U_1, U_2 \subset V'$ of sizes $|U_1|, |U_2|\geq  \frac{\log n}{100\sqrt{q}}$ we have $\left|E'(U_1, U_2) \right| \ge \frac{1}{2}|U_1||U_2|$.
		\end{enumerate}
		Then with probability $1 - o\left(\frac{1}{np\log n} \right)$, Procedure~\ref{procedure:2cycles} does not output FAILURE when applied to $C,C^*$ with $v_1, H, E', q$.
	\end{lemma}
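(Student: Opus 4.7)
The plan is to identify the three places where Procedure~\ref{procedure:2cycles} can output FAILURE---namely at steps 2, 4, and 5---bound each failure probability by $o(1/(np\log n))$, and combine via a union bound. The technical heart of the procedure (step 4) is essentially delegated to \Cref{lem:ManyRotationPaths}, so the main obstacle here is really organizational: checking that the hypotheses of that lemma transfer correctly to the constructed path $P$, and that the ``online sprinkles'' in steps 2 and 5 have access to enough available edges for their single-rate-$q$ trials to succeed with very high probability.

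For step 2, I would first verify that the set $E_1$ of prescribed size $b\log^7 n/\sqrt{np}$ exists. Since $v_1$ is not $(V',1/8)$-heavy in $D'$, at most $|V'|/8 \le b/4$ of its potential out-edges into $V(C)$ are already used (using $a \le b$), leaving at least $3b/4 - 1$ available candidates in $E'$; using $p \ge \log^{15}n/n$ one checks $b\log^7 n/\sqrt{np} \le b/\sqrt{\log n}$, which is far smaller than $3b/4$. A short calculation gives $q|E_1| = b\log^3 n/n \ge \log^2 n/4$ (since $b \ge n/(4\log n)$), so the probability that none of the $|E_1|$ independent rate-$q$ trials succeeds is at most $(1-q)^{|E_1|} \le e^{-\log^2 n/4}$.

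For step 4, the path $P$ built in step 3 has length $a+b \ge b \ge n/(4\log n)$ and vertex set $V(P) = V'$. Because $(X,c)$-heaviness depends only on the vertex set and the ambient digraph, hypothesis (1) transfers verbatim: $P$ contains no $(P,1/8)$-heavy vertex. Hence \Cref{lem:ManyRotationPaths} applies directly and yields, with probability $1 - o(1/(n\log n))$, integers $t_\ell, t_r \le \log n/(4\log\log n)$ for which the endpoint sets satisfy $|\END_\ell^{t_\ell}|, |\END_r^{t_r}| = \Omega(\log n/\sqrt{q})$. Since $p \le \varepsilon$ is bounded, $o(1/(n\log n))$ is also $o(1/(np\log n))$.

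For step 5, once step 4 has succeeded the sets $U_1 \coloneqq \END_r^{t_r}$ and $U_2 \coloneqq \END_\ell^{t_\ell}$ both clear the threshold $\log n/(100\sqrt{q})$, so hypothesis (2) gives $|E_2| = |E'(U_1,U_2)| \ge \tfrac{1}{2}|U_1||U_2| = \Omega(\log^2 n/q)$. Each edge in $E_2$ is exposed independently at rate $q$, so the probability that none of them is successfully exposed is at most $(1-q)^{|E_2|} \le e^{-\Omega(\log^2 n)}$. Summing the three contributions yields a total failure probability of $o(1/(np\log n))$, as required; the resulting cycle $C'$ spans $V'$ by construction of the rotations, and the updated set $E''$ is produced as in step 6.
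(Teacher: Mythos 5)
Your proposal is correct and follows essentially the same route as the paper's proof: it isolates the three possible FAILURE points (steps 2, 4, 5), verifies via the non-heaviness of $v_1$ that $E_1$ is large enough so that $(1-q)^{|E_1|} \le e^{-\frac14\log^2 n}$, delegates step 4 to \Cref{lem:ManyRotationPaths} (noting $o(1/(n\log n)) = o(1/(np\log n))$), and uses hypothesis (2) to get $|E_2| = \Omega(\log^2 n/q)$ for step 5. The only additions beyond the paper's write-up are harmless explicit checks (that $E_1$ of the prescribed size exists and that heaviness transfers from $V'$ to $P$), so no further comment is needed.
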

	
	\begin{proof}
		We bound from above the probability to output a FAILURE in Procedure~\ref{procedure:2cycles} by bounding from above the probability to fail in each of the steps that can actually result in  FAILURE (that is, step $2$, step $4$ and step $5$).
		Denote $m \coloneqq a+b$.
		
		\paragraph*{FAILURE at step $2$:} First note that since $v_1$ is not $\left(V', \frac{1}{8} \right)$-heavy, we have
		\begin{align*}
		\left|\left\{\overrightarrow{v_1u} \in E' ~:~ u_- \in V(C) \right\} \right| &\ge \left|E'(v_1, V') \right| - \left|V(C^*) \right| \\
		&\ge \tfrac{7}{8}m - a\\
		&\ge \tfrac{b \log^7 n}{\sqrt{np}},
		\end{align*}
		so at step $1$ of the procedure we have $E_1 \neq \emptyset$.
		Now recall that step $2$ outputs FAILURE if and only if no edge in $E_1$ was successfully exposed at step $1$.
		Since $\frac{b \log^7 n}{\sqrt{np}}$ many edges were exposed, each with probability $q$, and since $b \ge \frac{n}{4\log n}$, we get that the probability of this event to occur is at most
		\begin{align*}
		(1-q)^{\frac{b\log^7 n}{\sqrt{np}}} \le e^{-\frac{qb \log^7 n}{\sqrt{np}}} \le e^{-\frac{q\sqrt{n}\log^6 n}{\sqrt{p}}} = e^{-\frac{1}{4}\log^2 n} = e^{-\omega(\log(np\log n))} = o\left(\tfrac{1}{np\log n} \right).
		\end{align*}	
		
		\paragraph*{FAILURE at step $4$:} The conditions of \Cref{lem:join2cycles} immediately imply that the conditions of \Cref{lem:ManyRotationPaths} are satisfied too.
		Hence, with probability $1 - o\left(\frac{1}{np\log n} \right)$ there exist integers $t_{\ell}, t_r$ as required, and we have $\left|\END_{\ell}^{t_{\ell}} \right|, \left|\END_r^{t_r} \right| = \Omega\left(\frac{\log n}{\sqrt{q}} \right)$.
		In particular, this means that with probability $1 - o\left(\frac{1}{np\log n} \right)$ Step $4$ does not output FAILURE.
		
		\paragraph*{FAILURE at step $5$:} As $\left|\END_{\ell}^{t_{\ell}} \right|, \left|\END_r^{t_r} \right| = \Omega\left(\frac{\log n}{\sqrt{q}} \right)$, by the assumption of the lemma we get that
		\[|E_2| = \left|E'\left(\END_r^{t_r}, \END_{\ell}^{t_{\ell}} \right) \right| \ge \tfrac{1}{2} \left|\END_{\ell}^{t_{\ell}} \right| \left|\END_r^{t_r} \right| =  \Omega\left(\tfrac{\log^2 n}{q} \right). \]
		Step $5$ outputs FAILURE if and only if no edge in $E_2$ was successfully exposed.
		But since each edge in $E_2$ was exposed independently with probability $q$, the probability of this event to occur is at most $(1-q)^{\Omega (\log^2 n / q)} \le e^{-\Omega (\log^2 n)} =n^{-\omega(1)}$.
	\end{proof}

	\subsection{\texorpdfstring{Turning a $1$-factor into a Hamilton cycle}{Turning a 1-factor into a Hamilton cycle}}
	
	In this section we describe how to convert a single $1$-factor into a Hamilton cycle by repeated applications of Procedure~\ref{procedure:2cycles}, merging its cycles one at a time, until we end up with a Hamilton cycle.
	Let $F=\left(C_0, \ldots, C_N \right)$ be a $1$-factor on $n$ vertices where $n$ and $N$ are integers and $C_0$ is of maximum length.
	For each $i \in [N]$, let $v_{C_i} \in V(C_i)$ be a designated vertex, and denote $\bar{v} = (v_{C_1}, \ldots, v_{C_N})$ and $V_i \coloneqq \bigcup_{j=0}^i V(C_j)$.
	Let $E'$ be a set of available edges and $q \in [0,1]$ be a probability parameter.
	In Procedure~\ref{procedure:factor} we convert $F$ into a Hamilton cycle.
	\begin{algorithm}
		\caption{Converting a $1$-factor into a Hamilton cycle}
		\label{procedure:factor}
		\textbf{INPUT}: $F = \left(C_0, C_1, \ldots, C_N \right), \bar{v} = (v_{C_1}, \ldots, v_{C_N}), E', q$. \\
		\textbf{OUTPUT}: $C, E''$.
		\begin{algorithmic}[1]
			\STATE Set $C \coloneqq C_0$ and $E'_1 \coloneqq E'$.
			\STATE \textbf{$i$th iteration:} For each $1\ \le i \le N$:
			\begin{ALC@g}
				\STATE Apply Procedure~\ref{procedure:2cycles} to $C, C_i$ with $v_{C_i}, E'_i, q$.
				\STATE If Procedure~\ref{procedure:2cycles} outputs FAILURE then output FAILURE as well.
				Otherwise, let $C'$ and $E'_{i+1}$ be the output of Procedure~\ref{procedure:2cycles} and update $C$ to be the new cycle $C'$.
			\end{ALC@g}
			\STATE Output $C$ and $E'' \coloneqq E'_{N+1}$.
		\end{algorithmic}
	\end{algorithm}
	
	\begin{lemma}
		\label{lem:factor}
		Let $n$ be an integer and let $\frac{\log^{15}n}{n} \le p \le \varepsilon$ and $q = \sqrt{\frac{p}{n \log^8 n}}$ be probability parameters.
		Let $F = (C_0, \ldots, C_N)$ be a $1$-factor on a vertex set $V$ of size $n$ and such that $C_0$ is of maximum length, and assume that $1\le N \le 4\log n$.
		Let $E'$ be a subset of available edges.
		For each $i \in [N]$ write $V_i \coloneqq \bigcup_{j=0}^i V(C_j)$,
		and assume that the following hold.
		\begin{enumerate}
			\item $V_i$ contains no $\left(V_i, \tfrac{1}{8} \right)$-heavy vertices.
			\item For any two subsets $U_1, U_2 \subset V$ of sizes $|U_1|, |U_2| = \Omega\left(\frac{\log n}{\sqrt{q}} \right)$ we have $\left|E'(U_1, U_2) \right| \ge \frac{1}{2}|U_1||U_2|$.
		\end{enumerate}
		Then with probability $1 - o\left(\frac{1}{np} \right)$, Procedure~\ref{procedure:factor} does not output FAILURE when applied to $F$ with $\bar{v}, E', q$.
	\end{lemma}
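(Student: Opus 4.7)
The plan is to prove this lemma by induction on the iteration index $i$ of Procedure~\ref{procedure:factor}, iteratively applying Lemma~\ref{lem:join2cycles} at each of the $N \leq 4\log n$ calls to Procedure~\ref{procedure:2cycles}, and then union-bounding the failure probabilities. The inductive invariant I would maintain is that after $i-1$ successful merges the current cycle $C$ has vertex set exactly $V_{i-1} = \bigcup_{j=0}^{i-1} V(C_j)$.

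At the $i$-th iteration, Procedure~\ref{procedure:2cycles} is called with inputs $C$, $C_i$, $v_{C_i}$, $E'_i$, $q$, so I would first verify the hypotheses of Lemma~\ref{lem:join2cycles}. The equality $V(C) \cup V(C_i) = V_i$ is immediate from the invariant; the comparison $|V(C)| \geq |V(C_i)|$ holds because $V(C_0) \subseteq V(C)$ and $C_0$ is the longest cycle of $F$; the lower bound $|V(C)| \geq n/(4\log n)$ comes from Corollary~\ref{lem:longcycle}; and the $(V_i, 1/8)$-heaviness hypothesis is precisely condition (1) of the present lemma.

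The main technical point, and the principal obstacle, is to show that the edge-availability hypothesis (condition (2) of Lemma~\ref{lem:join2cycles}) continues to hold for $E'_i$ after edges have been removed at previous iterations. Between consecutive iterations, Procedure~\ref{procedure:2cycles} removes from $E'$ only the successfully exposed rotation edges $E^*$ together with the two linking edges $\overrightarrow{v_1 u_1}$ and $\overrightarrow{yx}$. Since the exposure probability $q$ is small and the rotation tree has depth bounded by $t_{\ell}, t_r \leq \log n/(4\log\log n)$, a Chernoff estimate on the binomial number of successful exposures shows that with probability $1 - n^{-\omega(1)}$ only $\tilde O(\sqrt{n})$ edges are removed per call, hence a total of $\tilde O(\sqrt{n})$ edges across all $N \leq 4\log n$ iterations. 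Comparing this to the required lower bound $\tfrac{1}{2}|U_1||U_2| = \Omega(\log^2 n/q)$ appearing in condition (2) (which in the range $\log^{15} n/n \leq p \leq \varepsilon$ is of order at least $\sqrt{n} \log^6 n$), we see that condition~(2) is preserved with room to spare and so propagates to every $E'_i$.

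With all hypotheses of Lemma~\ref{lem:join2cycles} verified at every iteration, each call to Procedure~\ref{procedure:2cycles} fails with probability at most $o(1/(np\log n))$. A union bound over the at most $4\log n$ iterations, absorbing the $n^{-\omega(1)}$ edge-bookkeeping loss, yields a total failure probability of $o(1/(np))$, as required.
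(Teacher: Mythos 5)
Your proposal is correct and follows essentially the same route as the paper: reduce each of the $N\le 4\log n$ iterations to \Cref{lem:join2cycles} (checking $b\ge a$, $b\ge n/(4\log n)$ and the heaviness condition exactly as you do) and union bound the per-call failure probability $o\!\left(\frac{1}{np\log n}\right)$ over the iterations. The one place you go beyond the paper's own (very terse) proof is in explicitly verifying that condition (2) survives the edge deletions between iterations; the paper simply asserts that conditions 1--2 of the lemma imply those of \Cref{lem:join2cycles} and defers all such bookkeeping to the proof of \Cref{lem:final}, so your extra accounting is a welcome addition rather than a deviation --- with the minor caveat that the constant $\tfrac12$ is not literally preserved but only degrades by $o(1)$, which is harmless since all that is used downstream is $|E_2|=\Omega(\log^2 n/q)$.
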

	
	\begin{proof}
		Procedure~\ref{procedure:factor} outputs FAILURE if and only if Procedure~\ref{procedure:2cycles} outputs FAILURE when applied at step $3$ for some $i \in [N]$.
		Hence, it is enough to show that whenever Procedure~\ref{procedure:2cycles} is applied, the conditions of \Cref{lem:join2cycles} are satisfied.
		
		Let $i \in [N]$.
		At the $i$th iteration of Procedure~\ref{procedure:factor}, we apply Procedure~\ref{procedure:2cycles} to the cycles $C, C_i$ as $C, C^*$, with $v_{C_i}, E'_i, q$.
		Note that conditions $1$--$2$ of this lemma imply conditions $1$--$2$ of \Cref{lem:join2cycles}, and moreover, we have $|C_0| \ge \frac{n}{N} \ge \frac{n}{4\log n}$.
		Hence, we get that with probability $1 - o\left(\frac{1}{np\log n} \right)$, Procedure~\ref{procedure:2cycles} does not output FAILURE when applied to $C, C_i$ with $v_{C_i}, E'_i$ and $q$.
		Taking a union bound over all $N \le 4\log n$ iterations of Procedure~\ref{procedure:factor}, we get that with probability $1 - o\left(\frac{1}{np} \right)$ it does not output FAILURE when applied to $F$ with $\bar{v}, E', q$.
	\end{proof}
	
	In the next section we will show that, with high probability, applying Procedure~\ref{procedure:factor} to all $1$-factors in the family $\mathcal F = \left\{F_1, \ldots, F_{\delta} \right\}$ obtained at the end of Phase 1, one by one, is possible without ever getting FAILURE as an output.
	
	
	\section{\texorpdfstring{Final procedure and proof of \Cref{thm:main3}}{Final Procedure and proof of Theorem 1.1}}
	
	In this section we describe the final procedure of Phase $2$ and we show that when applying it to the digraph $D'$ obtained at the end of Phase $1$ and not getting FAILURE as an output implies our main result.
	We start with presenting some properties of the digraph $D'$, or more generally of $D_{n,p}$, which will be useful in our final analysis in this section.
	Then we describe the final procedure and tie all the ingredients together and prove the main result.

	\subsection{Choosing designated vertices}
	\label{sec:desvxs}
	In order to turn $1$-factors in $\mathcal{F} = \left\{F_1, \ldots, F_{\delta} \right\}$ on the vertex set $V$ into Hamilton cycles, we start by choosing a set of \emph{designated} vertices.
	For each $k \in [\delta]$ and a $1$-factor $F_k = (C_{k,0}, \ldots, C_{k,N_k}) \in \mathcal F$ and for each $i \in [N_k]$ and a cycle $C_{k,i} \in F_k$, we choose a vertex $v_{k,i} \in V(C_{k,i})$ uniformly at random.
	Note that a vertex $v \in V$ can be chosen as a designated vertex at most once in each $1$-factor, but multiple times when going over all $1$-factors in $\mathcal F$.
	For $v \in V$, let $\Des_{\mathcal F}(v)$ denote that number of times $v$ is chosen as a designated vertex, or more formally,
	\begin{align*}
	\Des_{\mathcal F}(v) \coloneqq \left|\left\{i ~:~ \exists k \in [\delta] \text{ s.t. } v = v_{ki} \right\} \right|.
	\end{align*}
	In Procedure~\ref{procedure:2cycles} we expose edges adjacent to $v_{ki}$ in order to tailor $C_{ki}$ to another cycle.
	Since we cannot afford to expose edges adjacent to a certain vertex ``too many times'', it would be useful to have some upper bound on $\Des_{\mathcal F}(v)$ for all $v \in V$.
	This is the exact property that the following lemma captures. 
	
	\begin{lemma}
		\label{lem:keygnrl}
		Let $f(n)$ be a function satisfying $f(n) = \omega\left((np)^{1/3} \log n \right)$.
		Then with high probability we have $\Des_{\mathcal F}(v) \le f(n)$ for all $v \in V$.
	\end{lemma}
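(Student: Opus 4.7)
The plan is to bound the third factorial moment of $\Des_{\mathcal F}(v)$ and apply Markov together with a union bound over $v$. Fix $v \in V$ and write $\Des_{\mathcal F}(v) = \sum_{k=1}^{\delta} Z_{k,v}$, where $Z_{k,v}$ indicates that $v$ is the designated vertex chosen from its cycle in $F_k$. Each $F_k$ arises from a perfect matching $M_k$ of $B$: viewing $M_k$ as a permutation $\sigma_{M_k} \in S_n$ (where $\sigma_{M_k}(i)=j$ iff $x_iy_j \in M_k$), the $1$-factor $F_k$ is exactly the functional graph of $\tau_k := \pi \circ \sigma_{M_k}$. Conditional on $\pi$, the $Z_{k,v}$ are independent Bernoullis with parameter $Y_{k,v} := 1/|C_{\tau_k}(v)|$, and since each $\tau_k$ is marginally uniform on $S_n$, the cycle length $|C_{\tau_k}(v)|$ is uniform on $[n]$, giving $\Expec[Y_{k,v}] = H_n/n = \Theta(\log n/n)$.

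By conditional independence,
\begin{equation*}
\Expec\!\left[\binom{\Des_{\mathcal F}(v)}{3}\right] = \sum_{k_1 < k_2 < k_3} \Expec[Y_{k_1,v}Y_{k_2,v}Y_{k_3,v}].
\end{equation*}
The central technical estimate I would establish is that for any three distinct indices $k_1,k_2,k_3 \in [\delta]$ and any $(t_1,t_2,t_3) \in [n]^3$,
\begin{equation*}
\Prob\!\left[|C_{\tau_{k_i}}(v)| = t_i \text{ for } i=1,2,3\right] = O(1/n^3).
\end{equation*}
To prove this I would count the permutations $\pi$ realising the prescribed cycle lengths directly. For each choice of would-be cycle trajectories $(v, w_{i,1}, \dots, w_{i,t_i-1})$ of $v$ in $\tau_{k_i}$, the cycle conditions pin $\pi$ at the inputs $\sigma_{M_{k_i}}(w_{i,j})$ with outputs $w_{i,j+1}$. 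In the generic case these $t_1+t_2+t_3$ constraints involve pairwise distinct inputs and a consistent partial bijection, leaving $(n-t_1-t_2-t_3)!$ extensions of $\pi$; combined with $\prod_i (n-1)(n-2)\cdots(n-t_i+1)$ trajectory choices and normalising by $n!$, this yields $O(1/n^3)$. The edge-disjointness of $M_{k_1},M_{k_2},M_{k_3}$ forces each pairwise shift $\sigma_{M_{k_i}}^{-1}\sigma_{M_{k_j}}$ to be a derangement, and this is the structural input I would use to control the non-generic configurations (repeated inputs, image conflicts, or the forced link via the common final constraint $\pi(u)=v$ shared by all three trajectories) and to show they do not exceed the generic contribution.

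Granted this bound, $\Expec[Y_{k_1,v}Y_{k_2,v}Y_{k_3,v}] \le \sum_{t_1,t_2,t_3}(t_1t_2t_3)^{-1} \cdot O(1/n^3) = O(\log^3 n/n^3)$, and since $\delta \le 2np$ whp (via \Cref{lem:deltaUppr}),
\begin{equation*}
\Expec\!\left[\binom{\Des_{\mathcal F}(v)}{3}\right] = O(\delta^3 \log^3 n / n^3) = O((p\log n)^3).
\end{equation*}
Since $\binom{S}{3} \ge \binom{f(n)}{3} \sim f(n)^3/6$ whenever $S \ge f(n)$, Markov's inequality yields
\begin{equation*}
\Prob[\Des_{\mathcal F}(v) \ge f(n)] = O\!\left(\frac{p^3 \log^3 n}{f(n)^3}\right),
\end{equation*}
which is $o(1/n)$ because $f(n)^3 = \omega(np \log^3 n)$ and $p \le 1$. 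A union bound over $v \in V$ then completes the proof.

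The main obstacle is the joint probability bound: its generic version is a routine count, but the non-generic configurations where the three trajectories share constraints on $\pi$ require careful bookkeeping, and it is precisely the derangement structure of the pairwise shifts (from edge-disjointness of the $M_k$) that prevents these degenerate cases from dominating. The paper's hint that a second-moment calculation is too weak is consistent with this approach: analogous second-moment reasoning would only yield $f(n) = \omega(\sqrt n\, p\log n)$, which is substantially worse than the $(np)^{1/3}\log n$ afforded by the third moment.
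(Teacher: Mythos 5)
Your overall strategy is the same as the paper's: reduce the problem to a third-moment bound whose core is the estimate $\Prob[\,c_{v,k_1}=t_1,\,c_{v,k_2}=t_2,\,c_{v,k_3}=t_3\,]=O(n^{-3})$ for three edge-disjoint $1$-factors, then apply Markov and a union bound. Your outer computation is correct (indeed, working with the factorial moment $\Expec\big[\binom{\Des_{\mathcal F}(v)}{3}\big]$ via conditional independence given $\pi$ is a slightly cleaner way to pass from the cycle lengths to $\Des_{\mathcal F}(v)$ itself than the paper's route through \Cref{lem:moment}), and the final Markov-plus-union-bound step matches the paper's proof of \Cref{lem:keygnrl} exactly.

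The genuine gap is that the central estimate is asserted rather than proven, and your sketch of it breaks down precisely where the difficulty lies. You claim the $O(n^{-3})$ bound uniformly over all $(t_1,t_2,t_3)\in[n]^3$ and propose to prove it by counting trajectory choices times $(n-t_1-t_2-t_3)!$ extensions of $\pi$; this generic count is meaningless once $t_1+t_2+t_3>n$, where the three cycles are forced to overlap heavily and \emph{every} configuration is ``non-generic.'' The paper does not prove your uniform claim: its \Cref{lem:highmoments} establishes $O(n^{-3})$ only for $t_1\le t_2\le t_3\le n/6$, and for larger cycle lengths it falls back on the weaker one- and two-cycle bounds compensated by factors $1/t_i=O(1/n)$ (the splitting into $\Sigma_1,\Sigma_2,\Sigma_3$ in the proof of \Cref{lem:moment}). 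Moreover, even in the restricted range the paper's argument is not a direct count but a sequential exposure of $\pi$ that tracks the path containing $v$ in each $1$-factor and conditions on the cycles' \emph{closing times} (Claims \ref{clm:cyclesbig}, \ref{large intersection} and \ref{clm:difclosing}), with separate treatment of the cases $|T|=O(1)$, large overlap, and coinciding closing times. The ``careful bookkeeping'' you defer is the entire content of Appendix B, so as written the proposal identifies the right lemma but does not supply a proof of it; to repair it you should either restrict the joint bound to $t_i\le n/6$ and add the paper's splitting of the sum, or give a genuinely new argument for the large-$t_i$ regime.
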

	
	For the proof of our main result, we only need to guarantee that with high probability each vertex is chosen at most $o\left(\sqrt{np} \right)$ many times, so we could choose $f(n)$ accordingly.
	Before proving \Cref{lem:keygnrl}, we need to introduce some useful notation.
	For a vertex $w \in V$ and an integer $i \in [\delta]$ we let $C_{w,i}$ denote the (unique) cycle in $F_i \in \mathcal F$ that contains $w$, and let $c_{w,i} \coloneqq |C_{w,i}|$ denote its length.
	The proof is based on the following lemma, which its proof, as being quite technical, is given in \Cref{sec:Appendix3Moment}.
	\begin{lemma}
		\label{lem:moment}
		For every $w \in [n]$ we have
		\begin{align*}
		\mathbb E \left[\left(\sum_{i \in [\delta]} \frac{1}{c_{w,i}} \right)^3 \right] = O\left(p \log^3 n \right).
		\end{align*}
	\end{lemma}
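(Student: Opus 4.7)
My plan is to expand and control each term of the third moment. Fix the edge-disjoint matchings $M_1,\ldots,M_\delta$, with associated bijections $f_i\colon[n]\to[n]$, and take $\pi\in S_n$ uniformly at random. Then each $\sigma_i \coloneqq \pi\circ f_i$ is marginally a uniform random permutation, so $c_{w,i}=c_w(\sigma_i)$ is marginally uniform on $[n]$, and the edge-disjointness of the $M_i$'s translates to the pairwise comparison permutations $g_{i,j}\coloneqq f_i^{-1}\circ f_j$ being derangements. Expanding by linearity,
\[
\mathbb E[X_w^3] \;=\; \sum_{i,j,k\in[\delta]} \mathbb E\!\left[\frac{1}{c_{w,i}\,c_{w,j}\,c_{w,k}}\right],
\]
I would partition the triples into three types: (A) $i=j=k$; (B) exactly two coincide; (C) all three distinct. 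Type~(A) is handled immediately, since $\mathbb E[1/c_w(\sigma)^3]=\frac{1}{n}\sum_{c=1}^{n}c^{-3}=O(1/n)$, giving total contribution $O(\delta/n)=O(p)$.

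For Types~(B) and~(C), I plan to establish the following \emph{approximate independence} bound: for distinct $i_1,\ldots,i_r\in[\delta]$ (here $r\in\{2,3\}$) and any $a_1,\ldots,a_r\in[n]$,
\[
\mathbb P\bigl[c_{w,i_1}=a_1,\ldots,c_{w,i_r}=a_r\bigr] \;\le\; \frac{K}{n^r}
\]
for an absolute constant $K$. Once this is in hand, summing $1/(a_1\cdots a_r)$ against the bound gives $\mathbb E\bigl[\prod_{\ell=1}^{r}1/c_{w,i_\ell}\bigr]=O((\log n/n)^r)$, and replacing one factor by its square (as needed in Type~(B)) only converts a $\log n$ to a constant in the resulting sum. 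Consequently, Type~(B) contributes $O(\delta^2\log n/n^2) = O(p^2\log n)$ and Type~(C) contributes $O(\delta^3\log^3 n/n^3) = O(p^3\log^3 n)$. Since $p\le 1$, summing (A), (B), (C) yields the claim $\mathbb E[X_w^3]=O(p\log^3 n)$.

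The main obstacle is the approximate independence estimate. Pinning down an ordered cycle $(w,u_{\ell,1},\ldots,u_{\ell,a_\ell-1})$ in $\sigma_{i_\ell}$ is equivalent to prescribing the values of $\pi$ on the $a_\ell$-element set $T_\ell \coloneqq \{f_{i_\ell}(w),f_{i_\ell}(u_{\ell,1}),\ldots,f_{i_\ell}(u_{\ell,a_\ell-1})\}$ with specific images. For a fixed family of $r$ such cycles, the number of $\pi$ compatible with the combined constraint equals $(n-s)!$ whenever the constraints are consistent, where $s=|T_1\cup\cdots\cup T_r|\le\sum_\ell a_\ell$. I would then stratify the sum over cycle choices by the overlap pattern of the $T_\ell$'s: because each $g_{i_\ell,i_{\ell'}}$ is a derangement, typical cycle choices yield $s\approx\sum a_\ell$ and the resulting contribution matches the independent bound, while cycle choices forcing many coincidences among the $T_\ell$'s are correspondingly rare and moreover must satisfy an image-consistency constraint that kills most of them. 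The delicate part is to organize this via a switching argument so that each extra coincidence costs only an absolute-constant factor, and this bookkeeping is presumably why the authors relegate the full calculation to an appendix.
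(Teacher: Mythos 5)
Your top-level decomposition is exactly the paper's: expand the third moment, split the triples into diagonal, partially-diagonal and fully off-diagonal types, handle the diagonal with the exact marginal $\mathbb{P}[c_{w,i}=a]=1/n$, and reduce types (B) and (C) to a joint estimate $\mathbb{P}[c_{w,i_1}=a_1,\ldots,c_{w,i_r}=a_r]=O(n^{-r})$ for $r\in\{2,3\}$ (this is the paper's \Cref{lem:highmoments}); the closing arithmetic $\delta\cdot O(1/n)+\delta^2\cdot O(\log^2 n/n^2)+\delta^3\cdot O(\log^3 n/n^3)=O(p\log^3 n)$ is then identical. The problem is that essentially all of the content of the lemma lives in that joint estimate, and your treatment of it is both overstated and unexecuted.

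First, you assert the bound $K/n^r$ for \emph{all} $a_1,\ldots,a_r\in[n]$. When $\sum_\ell a_\ell>n$ the supports $T_\ell$ are forced to overlap, your dichotomy ``typical choices give $s\approx\sum a_\ell$, atypical ones are rare'' is vacuous, and it is not clear the uniform bound even holds there: for $a_1=a_2=n$ it amounts to counting factorizations of the fixed permutation $f_{i_1}^{-1}f_{i_2}$ into two $n$-cycles, a quantity that is genuinely sensitive to its cycle type. The paper sidesteps this regime entirely by proving the joint bound only for $a_1\le\cdots\le a_r\le n/6$ and, whenever some $a_j$ exceeds that threshold, falling back on the exact single-cycle marginal together with $1/a_j=O(1/n)$; you would need to do the same (which is an easy fix) or prove something the paper does not. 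Second, and more seriously, for the range it does cover the paper's mechanism is not an enumeration of overlap patterns of the supports with a switching argument: the image-consistency constraints that $\pi$ must satisfy simultaneously for several matchings make that count hard to control directly, and ``each extra coincidence costs only an absolute-constant factor'' is precisely the step you have not justified. The paper instead runs a sequential exposure of $\pi$ that grows the path through $w$ in each $1$-factor and extracts a conditional factor $O(1/n)$ from the ``closing time'' of each cycle, with the degenerate cases (two cycles closing simultaneously, the union of supports having size $O(1)$, or an $\omega(1)$ overlap) handled by separate claims exploiting the bounded degree of $M_{i_1}\cup\cdots\cup M_{i_r}$, yielding $O(n^{-r})$ or $n^{-\omega(1)}$. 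Your plan names the correct target but leaves exactly this — the hard part, and the reason a third moment is needed at all — as an unproved ``delicate'' step, so as written it is not a proof.
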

	
	Perhaps surprisingly, the proof of \Cref{lem:keygnrl} is obtained by using a third moment argument (as a second moment argument is not strong enough for our purposes).
	
	\begin{proof}[Proof of \Cref{lem:keygnrl}]
		By a third moment calculation and using \Cref{lem:moment}, we have
		\begin{align*}
		\Pr \left[\sum_{i \in [\delta]} \frac{1}{c_{w,i}} \ge f(n) \right] \le \frac{\mathbb E \left[\left(\sum_{i\in [\delta]}\frac{1}{c_{w,i}} \right)^3 \right]}{f(n)^3}
		= \frac{O\left(p\log^3 n \right)}{\omega \left(np \log^3 n \right)}
		= o\left(\frac{1}{n} \right).
		\end{align*}
		Taking a union bound over all $n$ vertices $w \in V$ finishes the proof.
	\end{proof}

	\subsection{The distribution of available edges}
	We need one more general auxiliary result about random graphs, in order to guarantee that enough available edges are available for us to close rotated paths into cycles.
	
	\begin{proposition}
		\label{prop:nonedges}
		Let $p \in \left(0, \frac{1}{5} \right)$.
		Let $n$ and $t \coloneqq t(n) = \omega(\log n)$.
		Then with high probability $D_{n,p}$ does not contain any two subsets $U_1, U_2 \subset [n]$ with $|U_1|, |U_2| = t$ such that $e(U_1, U_2) \ge \frac{1}{2}|U_1||U_2|$.
	\end{proposition}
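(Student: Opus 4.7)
The plan is the standard union-bound against a ``too dense'' pair of subsets. Since $p<1/5$, the expected number of edges from $U_1$ to $U_2$ is at most $pt^2\le t^2/5$, which is well below the threshold $t^2/2$, and so a direct binomial tail estimate should suffice.

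First I would fix a pair $U_1,U_2\subseteq [n]$ with $|U_1|=|U_2|=t$. The random variable $e(U_1,U_2)$ is stochastically dominated by $\mathrm{Bin}(t^2,p)$ (the possible self-loops in $U_1\cap U_2$ are not even exposed, which only helps). Applying the crude tail bound
\[
\Pr\bigl[e(U_1,U_2)\ge t^2/2\bigr]\;\le\;\binom{t^2}{t^2/2}p^{t^2/2}\;\le\;(4p)^{t^2/2}\;\le\;(4/5)^{t^2/2},
\]
where I used $\binom{t^2}{t^2/2}\le 2^{t^2}$ and $p<1/5$.

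Next I would union-bound over all at most $\binom{n}{t}^2\le (en/t)^{2t}$ ordered pairs $(U_1,U_2)$, giving
\[
\Pr\bigl[\exists\,U_1,U_2\bigr]\;\le\;\Bigl(\tfrac{en}{t}\Bigr)^{2t}(4/5)^{t^2/2}.
\]
Taking logarithms, this is at most $\exp\!\bigl(2t\log(en/t)-(t^2/2)\log(5/4)\bigr)$. Since $\log(en/t)\le\log n$ and $t=\omega(\log n)$, the negative quadratic term $(t^2/2)\log(5/4)$ dominates the linear term $2t\log n$ by a factor $\omega(1)$, so the whole expression tends to $-\infty$ and the probability tends to $0$, as required.

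There is no real obstacle here: the only slightly delicate point is choosing a binomial-tail estimate that works all the way up to $p$ close to $1/5$, which is why I prefer the entropy bound $\binom{t^2}{t^2/2}\le 2^{t^2}$ over the looser $(et^2/a)^a$ estimate (the latter would only decay for $p<1/(2e)$). With this in place the calculation is routine, and the assumption $t=\omega(\log n)$ is exactly what makes the quadratic $t^2$ beat the $t\log n$ union-bound cost.
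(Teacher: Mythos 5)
Your proof is correct and follows essentially the same route as the paper: the same binomial tail estimate $\binom{t^2}{t^2/2}p^{t^2/2}\le(2\sqrt{p})^{t^2}=(4p)^{t^2/2}$ for a fixed pair, followed by the same union bound over $\binom{n}{t}^2$ pairs, with $t=\omega(\log n)$ making the quadratic exponent dominate. The only differences are cosmetic (you write the union-bound cost as $(en/t)^{2t}$ where the paper uses $n^{2t}$, and you spell out the logarithmic comparison the paper leaves implicit).
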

	
	\begin{proof}
		Given two subsets $U_1, U_2 \subset [n]$ with $|U_1|, |U_2,| = t$, we have
		\[\Pr\left[e(U_1, U_2) \ge \tfrac{1}{2}t^2 \right] \le \binom{t^2}{t^2/2}p^{t^2/2} \le (2\sqrt{p})^{t^2} < \left(\tfrac{2}{\sqrt{5}} \right)^{t^2}. \]
		Taking a union bound over all pairs of subsets, we get that the probability of having a pair of subsets $(U_1, U_2)$ with $|U_1|=|U_2|=t = \omega(\log n)$ and $e(U_1, U_2) \ge \frac{1}{2}|U_1||U_2|$ is at most
		\[\binom{n}{t}^2 \left(\tfrac{2}{\sqrt{5}} \right)^{t^2} \le n^{2t} \left(\tfrac{2}{\sqrt{5}} \right)^{t^2} = o(1). \qedhere \]
	\end{proof}

	\subsection{Final procedure}
	
	Let $\mathcal F = \left\{F_1, \ldots, F_{\delta} \right\}$ be a family of edge-disjoint $1$-factors on a vertex set $V$ of size $n$.
	For each $k \in [\delta]$ write $F_k = (C_{k,0}, \ldots, C_{k,N_k}) \in \mathcal F$, where $N_k$ is an integer, and let $\bar{v}_k\coloneqq \left(v_{k,1}, \ldots, v_{k,N_k} \right)$ be a tuple of designated vertices 
	such that $v_{k,i} \in V(C_{k,i})$ for each $i \in [N_k]$.
	Procedure~\ref{procedure:final} is the final procedure of our proof, applied to a family of $\delta$ edge-disjoint $1$-factors $\mathcal F$, with $\left(\bar{v}_1, \ldots, \bar{v}_{\delta} \right)$ as above, a set of available edges $E'$, and an edge-exposure probability $q \in [0,1]$.
	\begin{algorithm}
		\caption{Converting $\mathcal F$ into a family $\mathcal H_{\delta}$ of Hamilton cycles}
		\label{procedure:final}
		\textbf{INPUT}: $\mathcal F = \left\{F_1, \ldots, F_{\delta} \right\}, \left(\bar{v}_1, \ldots, \bar{v}_{\delta} \right), E', q$. \\
		\textbf{OUTPUT}: $\mathcal H_{\delta}$.
		\begin{algorithmic}[1]
			\STATE Set $\mathcal H_0 \coloneqq \emptyset$ and $E'_1 = E'$.
			\STATE \textbf{$k$th iteration}: For each $1\le k \le \delta$:
			\begin{ALC@g}
				\STATE Apply Procedure~\ref{procedure:factor} to $F_k$ with $\bar{v}_k, E'_k, q$.
				\STATE If Procedure~\ref{procedure:factor} outputs FAILURE, end procedure and output FAILURE.
				Otherwise, let $C_k$ be the output Hamilton cycle and $E'_{k+1}$ be the output set of available edges from Procedure~\ref{procedure:factor}.
				Let $\mathcal H_k = \mathcal H_{k-1} \cup \{C_k \}$.
			\end{ALC@g}
			\STATE Output $\mathcal H_{\delta}$.
		\end{algorithmic}
	\end{algorithm}
	
	Consider the digraph $D'$ obtained at the end of Phase $1$, and let $E'$ be the set of available edges in it.
	Let $\mathcal F = \left\{F_1, \ldots, F_{\delta} \right\}$ be the family of $1$-factors where $\delta = \delta^{\pm}(D_{n,p}) = (1+o(1))np$, and such that for each $k \in [\delta]$ we write $F_k = \left(C_{k,0}, \ldots, C_{k,N_k} \right)$ where $C_{0,k}$ is of maximum length among all cycles in $F_k$.
	Recall that $p_1 = \sqrt{\frac{p}{n\log^4 n}}$ and $q = \sqrt{\frac{p}{n\log^8 n}} = \frac{p_1}{\log^2 n}$.
	For $k \in [\delta]$ and $i \in [N_k]$ define the following.
	\begin{enumerate}
		\item $v_{k,i} \in V(C_{k,i})$ the designated vertex chosen from the cycle $C_{k,i}$ for the $1$-factor $F_k$.
		\item $\bar{v}_k \coloneqq \left(v_{k,1}, \ldots, v_{k,N_k} \right)$.
		\item $V_{k,i} \coloneqq \bigcup_{j=0}^{i} V(C_{k,j})$.
	\end{enumerate}

	\begin{lemma}
		\label{lem:final}
		Let $D'$ be the digraph obtained at the end of Phase 1, let $E'$ be the set of available edges in it and let $\mathcal F = \left\{F_1, \ldots, F_{\delta} \right\}$ be the family of $1$-factors.
		Then using the notation above, with high probability Procedure~\ref{procedure:final} does not output FAILURE when applied to $\mathcal F$ with $\left(\bar{v}_1, \ldots, \bar{v}_{\delta} \right), E', q$.
	\end{lemma}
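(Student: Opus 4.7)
The plan is to apply Lemma 4.5 to each of the $\delta = (1+o(1))np$ $1$-factors $F_k$ in turn and take a union bound: since Lemma 4.5 bounds the failure probability of Procedure 4.3 by $o(1/np)$ per application, the total failure probability is $o(\delta/np) = o(1)$. The whole work lies in verifying that the two hypotheses of Lemma 4.5 hold with high probability at every iteration.

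First I would collect the following high-probability events, each of which depends only on the construction of $D'$ and the choice of designated vertices, not on the online exposures in Procedure 5.5: (i) every vertex of $D' \subseteq D_{n,p}$ has in- and out-degree at most $2np$, by Chernoff; (ii) $|\cF| = \delta = (1+o(1))np$, each $F_k$ has at most $4\log n$ cycles, and its longest cycle $C_{k,0}$ has length at least $n/(4\log n)$, by Lemmas 3.1, 3.2, 3.4 and Corollary 3.5; (iii) no cycle of length at least $n/\log^3 n$ in any $F_k$ contains a $(\cdot,\tfrac{1}{9})$-heavy vertex in $D'$, by Proposition 3.6; (iv) for any $U_1, U_2 \subseteq V$ of size $\omega(\log n)$ the number of edges of $D_{n,p}$ across $(U_1,U_2)$ is less than $\tfrac{1}{2}|U_1||U_2|$, by Proposition 5.3; and (v) $\Des_{\cF}(v) = o(\sqrt{np})$ for every $v \in V$, by Lemma 5.1.

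With these in hand, I would verify the hypotheses of Lemma 4.5 at iteration $k$. The edge-density condition is the easier of the two: event (iv) gives $|E'(U_1,U_2)| \geq \tfrac{1}{2}|U_1||U_2|$ for every admissible pair at the start of Phase 2, and a short counting shows that over all $\delta N_k = O(np\log n)$ applications of Procedure 4.2 the total number of edges of $E'$ ever successfully exposed is a negligible fraction of $|U_1||U_2| = \Omega(\log^2 n/q)$ for the relevant subsets, so the bound is preserved in $E'_k$. For the heavy-vertex condition, one needs no $(V_{k,i}, \tfrac{1}{8})$-heavy vertex for every $i \in [N_k]$. I would combine (iii) and (i): in $C_{k,0}$ every vertex has at most $|C_{k,0}|/9$ neighbors, while in $V_{k,i} \setminus C_{k,0}$ every vertex has at most $\min\{|V_{k,i}|-|C_{k,0}|,\; 2np-|C_{k,0}|/9\}$ neighbors; a case analysis on the size of $V_{k,i}$ relative to $|C_{k,0}|$ and $16np$, exploiting the stringent choice $\varepsilon = 8^8/(2(9e)^9)$, shows the sum is strictly less than $|V_{k,i}|/8$. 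In the intermediate regime where neither trivial estimate suffices, I would adapt the counting argument from the proof of Proposition 3.6 to enumerate \emph{unions} of up to $4\log n$ cycles of the random permutation rather than individual cycles, the extra combinatorial overhead $\binom{n}{\leq 4\log n}$ being absorbed by the same stringent choice of $\varepsilon$.

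The main obstacle is this heavy-vertex verification for the growing unions $V_{k,i}$, since Proposition 3.6 is stated only for individual cycles of length at least $n/\log^3 n$; extending it to unions of a polylogarithmic number of cycles is the central technical step. Once this extension is established, both hypotheses of Lemma 4.5 are met at every iteration, and a final union bound over $k \in [\delta]$ concludes that Procedure 5.5 does not output FAILURE with high probability.
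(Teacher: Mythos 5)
Your skeleton is the same as the paper's --- apply \Cref{lem:factor} to each $F_k$ and union bound over the $\delta=(1+o(1))np$ factors --- and you are right (indeed, more careful than the paper, which simply cites \Cref{prop:heavyvtx} for the unions $V_{k,i}$) that extending the heavy-vertex property from single long cycles to unions of up to $4\log n$ cycles needs an argument. But there is a genuine gap: you verify the heavy-vertex condition only for the \emph{static} digraph $D'$, whereas the hypothesis of \Cref{lem:factor} (via \Cref{lem:join2cycles}) is really a statement about the \emph{current} set of available edges $E'_{k}$ at the time Procedure~\ref{procedure:2cycles} is invoked --- non-heaviness of $v_1$ is used there precisely to guarantee $|E'(v_1,V')|\ge \tfrac78 m$. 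The set $E'$ shrinks throughout Phase~2: a vertex $u$ loses available incident edges every time it is chosen as a designated vertex, every time it serves as a pivot or endpoint in an online-sprinkling rotation, and every time it is an endpoint of a successfully exposed closing edge. The bulk of the paper's proof is a per-vertex accounting of these losses (six cases, bounded by Chernoff using the selection probabilities from \Cref{lem:ManyRotationPaths}), showing the total loss at any vertex is $O(\log^7 n)+(np)^{1/3}\log^2 n=o(m_{k,i})$, so that an initial bound of $\tfrac89 m_{k,i}$ degrades only to $\tfrac78 m_{k,i}$. This is exactly where \Cref{lem:keygnrl} is needed --- the number of times $u$ is a designated vertex bounds one of the loss terms --- yet you list that lemma among your ingredients without ever invoking it.

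Your treatment of the edge-density condition has the same problem in milder form: the ``short counting'' you allude to cannot be a global count of all successfully exposed edges (that total is not negligible compared to $|U_1||U_2|=\Theta(\log^2 n/q)$ for large $p$); it has to go through the same per-vertex loss bound. Without the dynamic accounting, the hypotheses of \Cref{lem:factor} are only verified at the first iteration, and the proof is incomplete.
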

	
	\begin{proof}
		Procedure~\ref{procedure:final} outputs FAILURE if and only if at step $2$ for some iteration $k \in [\delta]$ Procedure~\ref{procedure:factor} outputs FAILURE when applied to $F_k, \bar{v_k}, E'_k, q$.
		Hence, it is enough to show that whenever Procedure~\ref{procedure:factor} is applied, the conditions of \Cref{lem:factor} are satisfied.
		
		Firstly, by \Cref{lem:fewcycles} and \Cref{lem:longcycle}, with high probability we have $N_k \le 4\log n$ and $\big|C_{k,0} \big| \ge \frac{n}{4\log n}$ for all $k \in [\delta]$.
		By \Cref{prop:nonedges} we further get that, with high probability, every two subsets $U_1, U_2 \subset V(D')$ of size $|U_1|, |U_2| = \Omega\left(\frac{\log n}{\sqrt{q}} \right)$ satisfy $\left|E'(U_1, U_2) \right| \ge \frac{1}{2}|U_1||U_2|$.
		
		Next, we show that with high probability for all $k \in [\delta]$ condition $1$ of \Cref{lem:factor} holds for all $i \in [N_k]$.
		For all $k \in [\delta]$ and all $i \in [N_k]$, by the definition of a $\left(V_{k,i}, \frac{1}{9} \right)$-heavy vertex and by \Cref{prop:heavyvtx}, we know that with high probability every $u \in V_{k,i}$ satisfy
		\begin{align}
		\label{eq:E11u}
		\left|E'\left(V_{k,i}, u \right) \right| \ge \tfrac{8}{9}m_{k,i} && \text{ and } && \left|E'\left(u, V_{k,i} \right) \right| \ge \tfrac{8}{9}m_{k,i}.
		\end{align}
		We show that condition $1$ from \Cref{lem:factor} holds for $E'_k$.
		In fact, we show the slightly stronger following statement.
		For $k \in [\delta]$ and $i \in [N_k]$ consider the $(k,i)$th iteration of Procedure~\ref{procedure:final}, that is, the $i$th iteration of Procedure~\ref{procedure:factor} in the $k$th iteration of Procedure~\ref{procedure:final}, where we apply Procedure~\ref{procedure:2cycles} to $C, C_{k,i}$ with $v_{k,i}, E'_{k,i}$ and $q$, where $E'_{i,k}$ is the set of available edge in the input.
		We show that, with high probability, for all $(k,i)$ and for all $u \in V_{k,i}$ we have
		\begin{align}
		\label{eq:inEkiu}
		\left|E'_{k,i}\left(V_{k,i}, u \right) \right| \ge \tfrac{7}{8}m_{k,i}, 
		\end{align}
		and
		\begin{align}
		\label{eq:outEkiu}
		\left|E'_{k,i}\left(u, V_{k,i} \right) \right| \ge \tfrac{7}{8}m_{k,i}.
		\end{align}
		
		Given a vertex $u \in V(D')$, the above quantities can decrease in an iteration of Procedure~\ref{procedure:2cycles} in one of the following cases.
		\begin{enumerate}
			\item[(a)] At step $2$ if $u = v_1$.
			\item[(b)] At step $2$ if $u = u_1$.
			\item[(c)] At step $4$ if an edge going out of $u$ is in $E^*$.
			\item[(d)] At step $4$ if an edge going into $u$ is in $E^*$.
			\item[(e)] At step $6$ if $u=y$.
			\item[(f)] At step $6$ if $u=x$.
		\end{enumerate}
		Denote by $Y_a(u), Y_b(u), Y_c(u), Y_d(u), Y_e(u), Y_f(u)$ the random variables counting the numbers of times that cases (a) -- (f) occured, respectively.
		
		\paragraph*{Proof of (\ref{eq:inEkiu}).}
		Let $k \in [\delta]$ and $i \in [N_k]$, and consider the $(k,i)$th iteration of Procedure~\ref{procedure:2cycles}, where $E'_{k,i}$ is the set of available edges in the input.
		If $(k,i) = (1,1)$ then we have $E'_{1,1} = E'$ and then (\ref{eq:inEkiu}) follows immediately by (\ref{eq:E11u}).
		If $(k,i) \neq (1,1)$ then by Procedure~\ref{procedure:2cycles} we have
		\begin{align}
		\label{eq:inEk1-Y}
		\left|E'_{k,i}\left(V_{k,i}, u \right) \right| \ge \left|E'_{1,1}\left(V_{k,i}, u \right) \right| - \left(Y_a(u) + Y_c(u) + Y_e(u) \right).
		\end{align}
		
		In a given iteration $(k,i)$, case (a) occurs at most once, and in particular it means that the edge $\overrightarrow{v_{k,i} u}$ was successfully expose.
		The probability for this event is at most
		\[q\cdot \tfrac{m_{k,i} \log^7 n}{\sqrt{np}} \cdot \tfrac{1}{m_{k,i}} \le \tfrac{\log^3 n}{n}. \]
		By \Cref{lem:fewcycles} and since $\delta = O(np)$, with high probability there are at most $O(np\log n)$ many iteration, and hence $Y_a(u)$ is stochastically dominated by a binomial random variable $Y_a \sim \Bin\left(O(np\log n), \frac{\log^3 n}{n} \right)$.
		by \Cref{Chernoff}, with $\eta = \frac{\log^2 n}{p} -1$, we get
		\begin{align}
		\begin{split}
		\label{eq:casea}
		\Pr\left[Y_a(u) \ge \log^7 n \right] &\le \Pr\left[Y_a \ge \log^7 n \right] \\
		&\le e^{-\eta^2\cdot O(p\log^5 n)/(2+\eta)} \\
		&\le e^{-\log^6 n} \\
		&= o\left(\tfrac{1}{n^2p\log^2 n} \right).
		\end{split}
		\end{align}
		
		Similarly, in a given iteration $(k,i)$, case (e) occurs at most once, and in particular it means that $u$ was chosen as a right enpoint in $\END^{t_r}_r$, for an appropriate $t_r$, which, by \Cref{lem:ManyRotationPaths}, happens with probability at most
		\[\Theta\left(\tfrac{\log n}{m_{k,i}\sqrt{q}} \right) = O\left(\tfrac{\log^2 n}{n\sqrt{q}} \right).\]
		In this case, we know that all edges from $\END_{\ell}^{t_{\ell}}$ into $u$ were exposed with probability $q$ to be successfully exposed.
		By \Cref{lem:ManyRotationPaths}, the probability of case (e) to occur on iteration $(k,i)$ is at most
		\[O\left(\tfrac{\log^2 n}{n\sqrt{q}} \right) \cdot \tfrac{\log n}{\sqrt{q}} \cdot q = O\left(\tfrac{\log^3 n}{n} \right). \]
		Considering all iterations, we get that with high probability $Y_e(u)$ is stochastically dominated by a binomial random variable $Y_e \sim \Bin\left(O(np\log n), O\left(\frac{\log^3 n}{n} \right) \right)$, and by a similar calculation to the one in the analysis above of case (a), we get that
		\begin{align}
		\label{eq:casee}
		\Pr\left[Y_e(u) \ge \log^7 n \right] = o\left(\tfrac{1}{n^2p\log n} \right).
		\end{align}
		
		As for case (c), in a given iteration $(k,i)$ it can occur more than once.
		This happens when $u$ is a pivot for online sprinkling double rotation, which by the proof of \Cref{lem:ManyRotationPaths} happens with probability at most
		\[\Theta\left(\tfrac{\log n}{m_{k,i}\sqrt{q}} \right) = O\left(\tfrac{\log^2 n}{n\sqrt{q}} \right). \]
		Then, by the proof of \Cref{lem:ManyRotationPaths}, we know that with high probability at most $\log^3 n$ edges are exposed, each independently with probability $q$.
		In total, the probability that edge is counted in $Y_c(u)$ is at most $\frac{\sqrt{q} \log^2 n}{n} = \frac{p^{1/4}}{n^{5/4}}$, and over all iterations this happens at most $O(np\log^5 n)$ many times.
		We get that $Y_c(u)$ is stochastically dominated by a binomial random variable $Y_c\sim \Bin\left(O(np\log^5 n), \frac{p^{1/4}}{n^{5/4}} \right)$.
		Again, by \Cref{Chernoff}, with $\eta = \frac{n^{1/4}\log^3 n}{p^{5/4}} - 1$, we get
		\begin{align}
		\label{eq:casec}
		\Pr\left[Y_e(u) \ge \log^7 n \right] \le \Pr\left[Y_e \ge \log^7 n \right] \le e^{-\log^6 n} = o\left(\tfrac{1}{n^2p\log n} \right).
		\end{align}
		
		By (\ref{eq:E11u}), (\ref{eq:inEk1-Y}), (\ref{eq:casea}), (\ref{eq:casee}), (\ref{eq:casec}) and since $m_{k,i} \ge \frac{n}{4\log n}$, and by taking a union bound over all iterations and vertices, we get that with high probability we have
		\[\left|E'_{k,i}\left(V_{k,i}, u \right) \right| \ge \tfrac{8}{9}m_{k,i} - 3\log^7 n \ge \tfrac{7}{8}m_{k,i}, \]
		for all $k \in [\delta]$, $i \in [N_k]$ and $u \in V_{k,i}$, as required.
		
		\paragraph*{Proof of (\ref{eq:outEkiu}).}
		Let $k \in [\delta]$ and $i \in [N_k]$, and consider the $(k,i)$th iteration of Procedure~\ref{procedure:2cycles}, where $E'_{k,i}$ is the set of available edges in the input.
		If $(k,i) = (1,1)$ then we have $E'_{1,1} = E'$ and then (\ref{eq:inEkiu}) follows immediately by (\ref{eq:E11u}).
		If $(k,i) \neq (1,1)$ then by Procedure~\ref{procedure:2cycles} we have
		\begin{align}
		\label{eq:outEk1-Y}
		\left|E'_{k,i}\left(u, V_{k,i} \right) \right| \ge \left|E'_{1,1}\left(u, V_{k,i} \right) \right| - \left(Y_b(u) + Y_d(u) + Y_f(u) \right).
		\end{align}
		
		The rest of the proof is almost similar to the proof of (\ref{eq:inEkiu}), where the analysis of case (d) is similar to the analysis of case (c), and the analysis of case (f) is similar to the analysis of case (e).
		However, the analysis of case (b) is different to the analysis of case (a) so we include it here and omit the other two cases.
		
		In a given iteration $(k,i)$, case (b) occurs at most once, and in particular it means that $u = v_{k,i}$.
		Hence, $Y_b(u) \le \Des_{\mathcal F}(u)$, and by \Cref{lem:keygnrl}, we know that with high probability we have $Y_b(u) \le (np)^{1/3}\log^2 n$.
		
		In total we get that, with high probability, for all $k \in [\delta]$, $i \in [N_k]$ and $u \in V_{k,i}$ we have
		\[\left|E'_{k,i}\left(u, V_{k,i} \right) \right| \ge \tfrac{8}{9}m_{k,i} - 2\log^7 n - (np)^{1/3}\log^2 n \ge \tfrac{7}{8}m_{k,i}, \]
		as required.
		
		Hence, by \Cref{lem:factor} we get that with probability $1 - o\left(\frac{1}{np} \right)$ Procedure~\ref{procedure:factor} does not output FAILURE when applied on the $k$th iteration in step $2$ of Procedure~\ref{procedure:final}.
		Taking a union bound over all $\delta = (1+o(1))np$ iterations we get that with high probability Procedure~\ref{procedure:final} does not output FAILURE when applied to $\mathcal F$ with $(\bar{v_1}, \ldots, \bar{v_{\delta}}), E', q$ as in the statement.
	\end{proof}

	\subsection{Proof of main result}
	To prove \Cref{thm:main3} we show the following.
	\begin{enumerate}
		\item One can apply Phase $2$ to the digraph $D'$ obtained at the end of Phase $1$ with appropriate parameters.
		\item The outcome digraph $D''$ is equivalent to generating $D_{n,p}$.
		\item The Hamilton cycles in the family $\mathcal H_{\delta}$ in the outcome digraph are edge-disjoint.
	\end{enumerate}
	These three ingredients together give a decomposition of $D_{n,p}$ into minimum-degree many Hamilton cycles.
	\Cref{lem:final} implies the first item.
	The third item is implied by the fact that in Procedure~\ref{procedure:2cycles} each edge which is used in a newly formed Hamilton cycle is removed from the set of available edge for future iterations.
	As for the second item, we show that during the second phase, no edge was exposed ``too many'' times.
	
	Let $D'$ be the digraph obtained at the end of Phase $1$, let $E'$ be the set of available edges in it and let $\mathcal F = \left\{F_1, \ldots, F_{\delta} \right\}$ be the family of $1$-factors.
	Recall that $D'$ is a digraph on $n$ vertices and $\delta = (1+o(1))np$ where $\frac{\log^{15}n}{n} \le p < \varepsilon$.
	Suppose that, using the notation above, Procedure~\ref{procedure:final} is applied to $\mathcal F$ with $\left(\bar{v}_1, \ldots, \bar{v}_{\delta} \right), E', q$, where $q = \frac{p_1}{\log^2 n}$ and $p_1 = \sqrt{\frac{p}{n\log^4 n}}$.
	Given a directed edge $e \in E'$ let $X_e$ be the random variable counting the number of times the edge $e$ was exposed during Procedure~\ref{procedure:final}.
	
	\begin{lemma}
		\label{lem:uvProb}
		With high probability for all $e \in E'$ we have $X_e = o(\log^2 n)$.
	\end{lemma}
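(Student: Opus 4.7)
The plan is to fix an arbitrary directed edge $e = \overrightarrow{uv}$ and decompose $X_e = X_e^{(1)} + X_e^{(4)} + X_e^{(5)}$ according to which step of \Cref{procedure:2cycles} attempts the Bernoulli trial: the subset $E_1$ in Step~1, the online sprinkling double rotations in Step~4, and the subset $E_2$ in Step~5. For each piece I will bound the conditional expected number of trials (conditional on the high-probability structural event from \Cref{lem:fewcycles,lem:keygnrl,lem:ManyRotationPaths}), then apply a Chernoff-type concentration, and finally union-bound over the at most $n^2$ directed pairs.

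For $X_e^{(1)}$: a trial at iteration $(k,i)$ requires $u = v_{k,i}$, so the number of attempts across \Cref{procedure:final} is at most $\Des_{\mathcal F}(u)$, which by \Cref{lem:keygnrl} is at most any $f(n) = \omega\!\left((np)^{1/3}\log n\right)$ with high probability. Conditional on $u = v_{k,i}$, the edge $e$ lies in the uniformly random subset $E_1$ of size $b\log^7 n / \sqrt{np}$ with probability $|E_1|/|S_{k,i}| = O(\log^7 n / \sqrt{np})$, using that the heavy-vertex hypothesis (as in the proof of \Cref{lem:join2cycles}) yields $|S_{k,i}| = \Omega(m_{k,i})$. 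These trials, across the distinct $1$-factors $F_k$ where $u$ is designated, are conditionally independent given the $1$-factors, so a Chernoff bound on the resulting sum-of-Bernoullis will control the tail.

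For $X_e^{(4)}$: each rotation level within an iteration uses endpoint sets $\END_\ell^{t'-1}, \END_r^{t'-1}$ of size $O(\log n/\sqrt q)$ by \Cref{lem:ManyRotationPaths}, across at most $t_\ell, t_r = O(\log n/\log\log n)$ levels. For $e$ to be tried in a left-rotation sub-step we need $v$ to be a current left endpoint and $u \in V_2$ of the corresponding path, and the trial fires with probability $100 \log n / m$ where $m \ge n/(4\log n)$ by \Cref{lem:longcycle}; the right-rotation contribution is symmetric and the secondary left-rotation sub-step requires an additional successful outcome from the first sub-step, giving an extra factor of $q$. Multiplying these per-trial probabilities against the $O(np\log n)$ bound on the total number of iterations (\Cref{lem:fewcycles}) keeps the conditional mean polylogarithmic. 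For $X_e^{(5)}$: analogously, $|E_2| \le |\END_r||\END_\ell| = O(\log^2 n / q)$ per iteration, and a similar counting plus concentration handles it.

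The main obstacle is a genuine dependence: the endpoint sets $\END_\ell^{t'}, \END_r^{t'}$ at one rotation level depend on the Bernoulli outcomes of earlier levels, and the available-edge pool $S_{k,i}$ used in later iterations depends on everything exposed before. I will sidestep this by conditioning on the good event $\mathcal E$ that the conclusions of \Cref{lem:fewcycles,lem:keygnrl,lem:ManyRotationPaths} all hold; on $\mathcal E$, one can pointwise dominate $X_e$ by a sum of conditionally independent Bernoullis whose total expectation is much less than $\log^2 n$, and a single Chernoff application then gives $\Pr[X_e \ge \frac13 \log^2 n \mid \mathcal E] = n^{-\omega(1)}$ for each of the three components. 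Combining with $\Pr[\mathcal E^c] = o(1)$ and a union bound over the $\le n^2$ directed pairs completes the proof.
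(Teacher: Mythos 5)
Your proposal follows essentially the same route as the paper: the same decomposition $X_e = X^1_e + X^4_e + X^5_e$ by the step of Procedure~\ref{procedure:2cycles} at which the trial occurs, the same inputs (\Cref{lem:keygnrl} for the designated-vertex count, \Cref{lem:ManyRotationPaths} for the endpoint-set sizes, \Cref{lem:fewcycles} for the number of iterations), stochastic domination of each component by a binomial with polylogarithmic mean, and a union bound over the $n^2$ edges; the paper merely packages the endpoint/pivot counting into a separate claim (\Cref{clm:uvProb4}) and uses an elementary binomial tail bound in place of Chernoff. The only slip is cosmetic: the secondary left-rotation sub-step is gated by a success at probability $\frac{100\log n}{m}$, not $q$, but this does not affect the argument.
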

	
	Given \Cref{lem:uvProb}, with high probability each edge in Phase $2$ was exposed $o(\log^2 n)$ many times with probability $q = \frac{p_1}{\log^2 n}$.
	Hence, with high probability, each edge was exposed with probability $o(p_1)$, implying that the digraph generated in Phase $2$ is a directed subgraph of $D_{n,p_1}$, and in total $D''$ is a directed subgraph of $D_{n,p}$, finishing the proof.
	
	It is left to prove \Cref{lem:uvProb}.
	\begin{proof}[Proof of \Cref{lem:uvProb}]
		For a vertex $v \in V(D')$, we consider several quantities related to $v$ during Phase $2$.
		Denote by $\e_r(v)$ and $\e_{\ell}(v)$ the numbers of times $v$ was chosen as a right and left endpoint of a path when performing online sprinkling right and left rotations, respectively.
		Denote by $\m_r(v)$ and $\m_{\ell}(v)$ the numbers of times $v$ was chosen as a pitot vertex which is not an endpoint for right and left rotations of a path, respectively.
		Recall that by $\Des_{\mathcal F}(v)$ we denote the number of times where $v$ was chosen as a designated vertex in some cycle of some $1$-factor in $\mathcal F$.
		The following claim gives an estimation of these quantities.
		\begin{claim}
			\label{clm:uvProb4}
			With high probablity for all $v \in V(D')$ we have
			\[\e_r(v), \e_{\ell}(v), \m_r(v), \m_{\ell}(v) \le n^{1/4}p^{3/4}\log^7 n. \]
		\end{claim}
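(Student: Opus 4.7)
My plan for \Cref{clm:uvProb4} is a first-moment plus Chernoff-concentration argument, which I will sketch for $\e_r(v)$; the proofs for $\e_{\ell}(v), \m_r(v), \m_{\ell}(v)$ are entirely analogous. In a single call of online sprinkling right rotations on a path $P=(u_1,\ldots,u_m)$, a union bound over $w\in V_3(P)$ together with the independence of the edge exposures gives
\[\Pr\!\left[v\in\END_r^1(P)\right]\le |V_3|\cdot (100\log n/m)^2=O(\log^3 n/n),\]
using that $m\ge n/(4\log n)$ is maintained throughout Phase~2 by \Cref{lem:longcycle}. A middle-pivot role (say $v$ in the role of $w$) requires only the single edge $\overrightarrow{u_mv}$ to succeed, so its probability is $O(\log^2 n/n)$.

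Next I would bound the total number of online-sprinkling right-rotation calls made during Phase~2. There are $\delta=(1+o(1))np$ one-factors, each causing $N_k\le 4\log n$ merges by \Cref{lem:fewcycles}, each merge performing $t_r\le\log n/(4\log\log n)$ iterations, and iteration $t'$ making $|\mathcal P_r^{t'-1}|$ calls. The factor-$O(\log^2 n)$ per-step growth of $|\END_r^{t'}|$, which falls out of the analysis in the proof of \Cref{lem:ManyRotationPaths}, together with the stopping rule $|\END_r^{t_r}|=\Theta(\log n/\sqrt{q})$ gives a geometric sum dominated by its last term: $O(1/(\sqrt{q}\log n))$ calls per merge. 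Thus whp the total number of right-rotation calls is at most
\[T=O\!\left(np\log n\cdot \tfrac{1}{\sqrt{q}\log n}\right)=O(np/\sqrt{q})=O(n^{5/4}p^{3/4}\log^2 n),\]
and symmetrically for left rotations.

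Combining the two estimates gives $\EE[\e_r(v)]\le T\cdot O(\log^3 n/n)=O(n^{1/4}p^{3/4}\log^5 n)$, sitting a $\log^2 n$ factor below the target bound. Since the Bernoulli trials inside distinct calls are independent, the indicator $\mathbf{1}\{v\in\END_r^1(c)\}$ is, conditional on the history $\mathcal F_{c-1}$ before call $c$, Bernoulli with parameter at most $O(\log^3 n/n)$. Conditioning on the high-probability event that the total number of calls is at most $T$, the count $\e_r(v)$ is stochastically dominated by $\mathrm{Bin}(T,O(\log^3 n/n))$, and a multiplicative Chernoff bound yields $\Pr[\e_r(v)\ge n^{1/4}p^{3/4}\log^7 n]=n^{-\omega(1)}$. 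Taking a union bound over the $n$ vertices and the four quantities then closes the proof. The main technical delicacy is that the index set of calls is itself random (driven by the prior exposures); I plan to side-step this either by conditioning on the high-probability upper bound $T$ as above, or equivalently by applying Freedman's martingale inequality directly to the random-time-horizon sum.
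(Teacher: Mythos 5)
Your argument is correct and takes essentially the same route as the paper's: both reduce the claim to (i) a per-trial selection probability for $v$ driven by the $\Theta\left(\frac{\log n}{\sqrt{q}}\right)$ endpoint-set sizes and the geometric growth established in \Cref{lem:ManyRotationPaths}, and (ii) the $O(np\log^2 n)$ count of merges, and then conclude via binomial stochastic domination, a Chernoff bound, and a union bound over vertices. The only differences are bookkeeping — you aggregate per rotation call where the paper aggregates per merge, and your estimate of $O\left(\frac{1}{\sqrt{q}\log n}\right)$ calls per merge is slightly optimistic since the minimality of $t_r$ only guarantees $\left|\END_r^{t_r-1}\right| = O\left(\frac{\log n}{\sqrt{q}}\right)$, but the resulting $\log^2 n$ loss is exactly covered by the slack you noted and does not affect the stated bound.
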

		
		\begin{proof}
			We prove the statement for $\e_{\ell}(u)$ and $\m_{\ell}(u)$, where the proofs for $\e_r(u)$ and $\m_r(u)$ are similar.
			By the proof of \Cref{lem:ManyRotationPaths}, we get that for each $k \in [\delta]$ and $i \in [N_k]$, both probabilities of $u$ to be an endpoint and a mid-point in a left rotation of the path on $V_{ik}$, are stochastically dominated by the uniform distribution of choosing a subset of vertices of size $\Theta\left(\frac{\log n}{\sqrt{q}} \right) = \Theta\left(\frac{n^{1/4}\log^3 n}{p^{1/4}} \right)$ out of at least $\frac{n}{8\log n}$ vertices.
			Recall that with high probability we have $\delta = O(np)$ and $N_k \le 4\log n < \log^2 n$ for all $k \in [\delta]$, then by considering all cycles in all $1$-factors we get that both $\e_{\ell}(u)$ and $\m_{\ell}(u)$ are stochastically dominated by the random variable $X \sim \Bin\left(np\log^2 n, \frac{\alpha\log^4 n}{n^{3/4}p^{1/4}} \right)$ for some positive constant $\alpha$.
			Hence, by \Cref{Chernoff}, taking $\eta = \frac{n^{1/12}}{2p^{5/12}\log^{11}n} - 1$, we get
			\begin{align*}
			\Pr\left[\e_{\ell}(u) \ge n^{1/4}p^{3/4}\log^7 n \right] \le \Pr\left[X \ge n^{1/4}p^{3/4}\log^7 n \right] 
			\le e^{-\frac{\eta^2 \mathbb E[X]}{2+\eta}} 
			< e^{-\frac{(np)^{1/3}}{2\log^4 n}}
			= o\left(\tfrac{1}{n} \right),
			\end{align*}
			and similarly
			\begin{align*}
			\Pr\left[\m_{\ell}(u) \ge n^{1/4}p^{3/4}\log^7 n \right] = o\left(\tfrac{1}{n} \right).
			\end{align*}
			The statement then follows by taking a union bound over all $u \in V(D')$.
		\end{proof}

		Let $e = \overrightarrow{uv} \in E'$ be a directed edge.
		We wish to bound the expected number of times the edge $\overrightarrow{uv}$ was exposed during Phase $2$, that is, we show that
		\begin{align*}
		\Pr\left[\exists e\in E' \text{ s.t. } X_e = \Omega(\log^2 n) \right] = o(1).
		\end{align*}
		
		Let $k \in [\delta]$ and consider the $k$th iteration of step $2$ of Procedure~\ref{procedure:final}, where Procedure~\ref{procedure:factor} is applied to $F_k$ with $\bar{v}_k, E'$ and $q$.
		Let $F_k = (C_{k,1}, \ldots, C_{k,N_k})$ be the cycles in this $1$-factor, where $N_k \le 4\log n < \log^2 n$ is an integer.
		Let $i \in [N_k]$ and consider the $i$th iteration of Procedure~\ref{procedure:factor}, where Procedure~\ref{procedure:2cycles} is applied to merge two cycles.
		The input of Procedure~\ref{procedure:2cycles} at this point is $C, C_{k,i}, v_{k,i}, E'$ and $q$, where $C$ is a cycle with $V(C) = \bigcup_{j=0}^{i-1}V(C_{k,j})$, obtained as an output of the previous iteration of Procedure~\ref{procedure:factor}.
		The edge $\overrightarrow{uv}$ was exposed in Procedure~\ref{procedure:2cycles} if one of the following cases occured.
		\begin{itemize}
			\item In step $1$ if $\overrightarrow{uv} \in E_1$.
			\item In step $4$ when performing online sprinkling double rotations to $P$ with $E'$.
			\item In step $5$ if $\overrightarrow{uv} \in E_2$.
		\end{itemize}
		We analyze each case separately.
		
		Let $X^1_e, X^4_e$ and $X^5_e$ be the random variables that count the number of times the directed edge $e$ was exposed in steps $1$, $4$ and $5$ of Procedure~\ref{procedure:2cycles}, respectively.
		We have
		\[X_e = X^1_e + X^4_e + X^5_e\]
		and so
		\begin{align}
		\begin{split}
		\label{eq:uvProb145}
		\Pr\left[\exists e \in E' \text{ s.t. } X_e = \Omega(\log^2 n)  \right] &\le \Pr\left[\exists e \in E' \text{ s.t. } X^1_e = \Omega(\log^2 n)  \right] \\
		&+ \Pr\left[\exists e \in E' \text{ s.t. } X^4_e = \Omega(\log^2 n)  \right] \\
		&+ \Pr\left[\exists e \in E' \text{ s.t. } X^5_e = \Omega(\log^2 n) \right].
		\end{split}
		\end{align}
		We bound each term separately.
		We will use the following relatively weak tail bound for a binomial random variable.
		Given $X \sim \Bin(n,p)$ such that $np < 1$ and an integer $k$, we have
		\begin{align*}
		\Pr\left[X \ge k \right] = \sum_{i=k}^n \binom{n}{i}p^i(1-p)^i \le \sum_{i=k}^n (np)^i < n(np)^k.
		\end{align*}
		
		\paragraph*{First term in (\ref{eq:uvProb145}).}
		Note that if the edge $e = \overrightarrow{uv}$ was exposed in step $1$ of Procedure~\ref{procedure:2cycles}, on iteration $(k,i)$ of Procedure~\ref{procedure:final}, then in particular we have $u = v_{k,i}$ and $v$.
		Given that $u = v_{k,i}$ each edge from $u$ to successors of vertices in $V(C)$ is chosen uniformly at random with probability $\frac{\log^7 n}{\sqrt{np}}$.
		Hence, $X^1_e$ is stochastically dominated by a binomial random variable $\Bin\left(\Des_{\mathcal F}(u), \frac{\log^7 n}{\sqrt{np}} \right)$.
		By \Cref{lem:keygnrl}, we know that with high probability we have that $\Des_{\mathcal F}(u) \le (np)^{1/3}\log^2 n$ holds for all $u \in V(D')$.
		Hence, with high probability, for all $e \in E'$ we get that $X^1_e$ is stochastically dominated by the binomial random variable $Y^1 \sim \Bin\left((np)^{1/3}\log^2 n, \frac{\log^7 n}{\sqrt{np}} \right)$.
		In particular,
		\begin{align*}
		\Pr\left[X^1_e = \Omega(\log^2 n) \right] &\le \Pr\left[Y^1 = \Omega(\log^2 n) \right] \le (np)^{1/3}\log^2 n \left(\tfrac{\log^7 n}{\sqrt{np}} \right)^{\Omega(\log^2 n)} = o\left(\tfrac{1}{n^2} \right).
		\end{align*}
		Hence, by taking a union bound over all $e \in E'$ and by the above,
		\begin{align*}
		\Pr\left[\exists e\in E' \text{ s.t. } X^1_e = \Omega(\log^2 n) \right] \le n^2 \Pr\left[X^1_e = \Omega(\log^2 n) \right] = o(1).
		\end{align*}

		\paragraph*{Second term in (\ref{eq:uvProb145}).}
		If the edge $e=\overrightarrow{uv}$ was exposed in step $4$ of Procedure~\ref{procedure:2cycles} on iteration $(k,i)$ of Procedure~\ref{procedure:final}, then in particular one of the following occured.
		Either it was exposed during online sprinkling right rotations with
		\begin{itemize}
			\item $u$ as a right endpoint of a path, or
			\item $u_+$ as a pivot vertex in $V_3$ of a path,
		\end{itemize}
		or it was exposed during online sprinkling left rotations of a path $P$ with
		\begin{itemize}
			\item $v$ as a left endpoint of a path, or
			\item $v_-$ as a pivot vertex in $V_2$ of a path.
		\end{itemize}
		By \Cref{clm:uvProb4}, with high probability for all $u, v \in V(D')$ we have $\e_r(u) + \m_r(u_+) + \e_{\ell}(v) + \m_{\ell}(v_-) \le 4n^{1/4}p^{3/4}\log^7 n$.
		Hence, with high probability $X^4_e$ is stochastically dominated by the binomial random variable $Y^4 \sim \Bin\left(4n^{1/4}p^{3/4}\log^7 n, \frac{100\log n}{m_{k,i}} \right)$.
		Recall $m_{k,i} \ge \frac{n}{4\log n}$ and $q = \sqrt{\frac{p}{n\log^8 n}}$, so we get that
		\begin{align*}
		\Pr\left[X^4_e = \Omega(\log^2 n) \right] &\le \Pr\left[Y^4 = \Omega(\log^2 n) \right] \\
		&\le 4n^{1/4}p^{3/4}\log^7 n \cdot \left(\tfrac{400n^{1/4}p^{3/4}\log^8 n}{m_{k,i}} \right)^{\Omega(\log^2 n)} \\
		&\le n^{1/2} \left(\tfrac{p^{3/4}\log^9 n}{n^{3/4}} \right)^{\Omega(\log^2 n)} \\
		&= o\left(\tfrac{1}{n^2} \right).
		\end{align*}
		By taking a union bound over all $e \in E'$ and by the above we get
		\begin{align*}
		\Pr\left[\exists e\in E' \text{ s.t. } X^4_e = \Omega(\log^2 n) \right] = o(1).
		\end{align*}

		\paragraph*{Third term in (\ref{eq:uvProb145}).}
		If the edge $e = \overrightarrow{uv}$ was exposed in step $5$ of Procedure~\ref{procedure:2cycles} on iteration $(k,i)$ of Procedure~\ref{procedure:final}, then in particular we have $u \in \END_r^{t_r}$ and $v\in \END_{\ell}^{t_{\ell}}$, for the right choice of parameters $t_r, t_{\ell}$.
		Recall, similarly to the proof of \Cref{lem:ManyRotationPaths}, that the probablity of $v$ to be a left endpoint when performing online sprinkling left rotations to a path is stochastically dominated by the uniform distribution of choosing a subset of vertices of size $\Theta\left(\frac{\log n}{\sqrt{q}} \right) = \Theta\left(\frac{n^{1/4}\log^3 n}{p^{1/4}} \right)$ out of $\frac{1}{2}m_{k,i} \ge \frac{n}{8\log n}$ many vertices.
		On the other hand, by \Cref{clm:uvProb4} we know that with high probability we have $\e_r(u) \le n^{1/4}p^{3/4}\log^7 n$.
		Hence, $X^5_e$ is stochastically dominated by the binomial random variable $Y^5 \sim \Bin\left(n^{1/4}p^{3/4}\log^7 n, \frac{\alpha \log^4 n}{n^{3/4}p^{1/4}} \right)$, for some constant $\alpha$.
		Thus we get
		\begin{align*}
		\Pr\left[X^5_e = \Omega(\log^2 n) \right] \le  \Pr\left[Y^5 = \Omega(\log^2 n) \right]
		\le n^{1/4}p^{3/4}\log^7 n \left(\tfrac{\alpha p^{1/2}\log^{11}n}{n^{1/2}} \right)^{\Omega(\log^2 n)} =o\left(\tfrac{1}{n^2} \right).
		\end{align*}
		Taking a union bound over all $e \in E'$ and by the above we get
		\begin{align*}
		\Pr\left[\exists e \in E' \text{ s.t. } X^5_e = \Omega(\log^2 n) \right] = o(1).
		\end{align*}
		\\
		
		In total, we get that
		\begin{align*}
		\Pr\left[\exists e \in E' \text{ s.t. } X_e = \Omega(\log^2 n) \right] = o(1),
		\end{align*}
		as required.
	\end{proof}
\vspace{20pt}
{\bf Acknowledgement.} The first author would like to thank Kaarel Haenni for many fruitful discussions.
The second author was supported by UK Research and Innovation grant MR/W007320/2.
	
\bibliographystyle{abbrv}
\bibliography{refs}

\appendix

\section{\texorpdfstring{Proof of \Cref{lem:ManyRotationPaths}}{Proof of Lemma 4.2}}
\label{appendix:rotations}
We start with reminding the lemma proved in this appendix and the relevant notation.
Let $D'=(V,E)$ be a digraph on $n$ vertices, let $P = (u_1, \ldots, u_m)$ be a path of length $m \coloneqq m(n) \ge \frac{n}{4\log n}$ in $D'$ and let $E':=(V\times V)\setminus E$ be the set of all available edges in $D'$.

\begin{replemma}{lem:ManyRotationPaths}
	Let $\frac{\log^{15}n}{n} \le p \in [0,1]$ and $q = \sqrt{\frac{p}{n \log^8 n}}$, and assume that $P$ contains no $\left(P, \frac{1}{8} \right)$-heavy vertices.
	Then with probability $1-o\left(\frac{1}{n \log n} \right)$ the following hold.
	There exist integers $t_{\ell} \coloneqq t_{\ell}(P)$ and $t_r \coloneqq t_r(P)$ satisfying $t_{\ell}, t_r \le \frac{\log n}{4\log\log n}$ and such that after performing online sprinkling double rotations to $P$ with $E'$ and $(t_{\ell},t_r)$, we have
	\begin{align}
	\left|\END_{\ell}^{t_{\ell}} \right|, \left|\END_r^{t_r} \right| = \Omega\left(\tfrac{\log n}{\sqrt{q}} \right). \tag{\ref{eq:ENDq}}
	\end{align}
\end{replemma}

The proof of \Cref{lem:ManyRotationPaths} is quite technical, thus we include it in Appendix.

\begin{proof}
	We prove the statement for $\END_{\ell}^{t_{\ell}}$, and the proof for $\END_r^{t_r}$ is similar.
	In fact, we prove the following stronger statement and show that it implies (\ref{eq:ENDq}).
	\begin{claim}
		\label{cl:ENDqell}
		Let $t \coloneqq t(n)$ be an integer satisfying
		\begin{align}
		\label{eq:qp}
		\left(2\log n \right)^{2t} = O\left(\tfrac{\log n}{\sqrt{q}} \right).
		\end{align}
		Then with probability $1 - o\left(\frac{1}{n \log n} \right)$ we have
		\begin{align}
		\label{eq:ENDqell}
		\left(2\log n \right)^{2t} \le \left|\END_{\ell}^t \right| \le \left(50\log n \right)^{2t}.
		\end{align}
	\end{claim}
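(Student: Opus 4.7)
The plan is to prove Claim~\ref{cl:ENDqell} by induction on $t$. The base case $t=0$ is immediate since $\END_{\ell}^0 = \{u_1\}$. For the inductive step, I would first carry out a single-endpoint analysis: for each $u \in \END_{\ell}^{t-1}$ with its associated path $P^u$, the fact that the set $V_1 \cup V_2$ is preserved (as a set) under left-rotations, so that the non-$(P,1/8)$-heaviness assumption transfers to $P^u$, yields $|E'(V_2(P^u), u)|, |E'(V_1(P^u), y_+)| \in [m/8, m/4]$ for all relevant vertices. With exposure probability $100 \log n/m$, the expected sizes of $\IN{u}{V_2}$ and $\IN{y_+}{V_1}$ thus lie in $[12.5 \log n, 25 \log n]$; by Chernoff, each lies in $[3 \log n, 50 \log n]$ with probability $1 - n^{-\Theta(1)}$, and a union bound over the at most $n^{o(1)}$ relevant tuples gives uniform control.

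The upper bound is then immediate: $|\END_{\ell}^t| \le |\END_{\ell}^{t-1}| \cdot (50 \log n)^2 \le (50 \log n)^{2t}$ by induction. For the lower bound, I would count new-endpoint triples $(u, y, x)$ with multiplicity, obtaining a multiset count of at least $(3 \log n)^2 \cdot |\END_{\ell}^{t-1}| \ge (9/4)\cdot(2\log n)^{2t}$ using the inductive lower bound. Since this exceeds the target by a constant factor $> 1$, it suffices to show that the number of \emph{collisions} (distinct triples producing the same endpoint $x_+$) is at most a small fraction of this multiset count, with the required probability.

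The collision analysis is the main obstacle. The key observation is that, thanks to the online sprinkling framework, for each fixed $x \in V_1$ the indicators $\xi_{u,y,x}$ (that $(u, y, x)$ contributes $x_+$) are mutually independent Bernoullis indexed by $(u, y)$, each with parameter at most $(100\log n/m)^2$. A pair-counting argument over cases (whether $u = u'$, whether $y = y'$) then bounds $\mathbb{E}\bigl[\sum_x \binom{k_x}{2}\bigr] = O(|\END_{\ell}^{t-1}|^2 \log^4 n/m)$, where $k_x = \sum_{u,y}\xi_{u,y,x}$. Using the inductive upper bound together with the hypothesis $(2\log n)^{2t} = O(\log n/\sqrt q)$ (which forces $|\END_{\ell}^{t-1}| \le n^{1/2+o(1)}$) and $m \ge n/(4\log n)$, this expectation is $n^{o(1)}$, which is negligible compared to $(2\log n)^{2t}$. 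Upgrading this expected bound to the required $o(1/(n\log n))$ tail is the delicate step; I would do it by a variance (or higher-moment) calculation on the collision count, exploiting the independence of $k_x$ across $x$ for pairs of $x$'s that do not share a triple footprint, to apply a Chernoff-type bound.

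Finally, to derive (\ref{eq:ENDq}) from the claim, I would take $t_{\ell}$ to be the smallest integer with $(2\log n)^{2t_{\ell}} \ge \log n/\sqrt q$; since $(2\log n)^{2t}$ grows by a factor of $4\log^2 n$ per step, one has $t_{\ell} \le \log n/(4\log\log n)$, and $(2\log n)^{2t_{\ell}} = O(\log^3 n/\sqrt q)$ still falls in the valid range of the claim (with a suitably permissive implicit constant in the $O$). The claim then delivers $|\END_{\ell}^{t_{\ell}}| \ge (2\log n)^{2t_{\ell}} = \Omega(\log n/\sqrt q)$, as required; the analogous argument with right rotations handles $\END_r^{t_r}$.
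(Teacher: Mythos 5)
Your skeleton coincides with the paper's: induction on $t$, Chernoff plus a union bound to pin every $\left|\IN{u}{V_2(P^u)}\right|$ and $\left|\IN{y_+}{V_1(P^u)}\right|$ into $[3\log n, 50\log n]$, the product upper bound, and a multiset count of at least $(3\log n)^2\left|\END_{\ell}^{t-1}\right|$ for the lower bound. (One small slip: the number of tuples $(u,y)$ you union over is up to $(50\log n)^{2t-1}=n^{1/2+o(1)}$, not $n^{o(1)}$; the union bound still goes through against the $n^{-27/8}$ individual failure probability.) The final paragraph, deriving (\ref{eq:ENDq}) by taking $t_{\ell}$ minimal with $(2\log n)^{2t_{\ell}}\ge \log n/\sqrt q$, is exactly the paper's conclusion.

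The genuine gap is in your collision step. You bound $\Expec\bigl[\sum_x\binom{k_x}{2}\bigr]$ and observe it is a negligible fraction of $(2\log n)^{2t}$ in expectation, but you then need $\Pr\bigl[\sum_x\binom{k_x}{2}\ge c\,(2\log n)^{2t}\bigr]=o\bigl(\frac{1}{n\log n}\bigr)$, and Markov only yields roughly $n^{-1/2+o(1)}$ here; the variance/higher-moment concentration you defer is nontrivial (the $k_x$ are not independent across $x$, since they share the randomness determining which pairs $(u,y)$ are active), and you never carry it out. The paper avoids needing any concentration of the collision count: it defines $Z^{x,t-1}$ as the number of pairs $(u,y)$ producing $x$, and bounds $\Pr\left[\exists x \st Z^{x,t-1}\ge 3\right]$ directly by a union bound over $x$ and over \emph{triples} of pairs $(u_i,y_i)$, using the mutual independence of the three exposure events for a fixed $x$. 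This single first-moment union bound gives the required $o\bigl(\frac{1}{n\log^2 n}\bigr)$ tail and, more importantly, yields the stronger structural statement that every vertex is produced by at most two pairs, whence $\left|\END_{\ell}^{t}\right|$ is at least half the multiset count, i.e.\ at least $\frac{1}{2}(3\log n)^2\left|\END_{\ell}^{t-1}\right|\ge (2\log n)^2\left|\END_{\ell}^{t-1}\right|$. If you want to salvage your route, the cheapest repair is to replace your total-collision bound by exactly this ``no vertex has multiplicity $\ge 3$'' event, which converts the problem from concentration of a sum into a union bound over a cubic number of configurations.
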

	
	\begin{proof}[Proof of \Cref{cl:ENDqell}]
		We proceed by induction on $t$.
		If $t=0$ then (\ref{eq:ENDqell}) holds trivially, as $\END_{\ell}^0 = \{u_1 \}$.
		Let $t \ge 1$ and assume that the statement holds for all $0 \le t' \le t-1$.
		
		Observe that, using the notation introduced for online sprinkling left rotations of a path, one can write
		\begin{align}
		\label{eq:ENDunion}
		\END_{\ell}^t = \bigcup_{u \in \END_{\ell}^{t-1}} \END_{\ell}^1(P^u) = \bigcup_{u \in \END_{\ell}^{t-1}} \bigcup_{y \in \IN{u}{V_2(P^u)}} \left(\IN{y_+}{V_1(P^u)} \right)^+.
		\end{align}
		We show that, with high probability, the union in (\ref{eq:ENDunion}) consists of sets which are ``almost'' pairwise disjoint.
		That is, we show that, with high probability, every vertex in $V(P)$ is the endpoint of at most two different paths obtained from $P$ after performing $t-1$ many rotations.
		Assuming that, it follows that the size of the union in (\ref{eq:ENDunion}) is at least half of the sum of the sizes of the sets.
		
		Recall that for any $u \in \END_{\ell}^{t-1}$ we have $V_1(P^u)\cup V_2(P^u) = V_1(P)\cup V_2(P)$, since performing left rotations to a path does not change its right half.
		To show that the union in (\ref{eq:ENDunion}) consists of sets which are almost pairwise disjoint, we want to count the number of such sets in which a given vertex in $V_1(P)\cup V_2(P)$ is contained.
		Then we can show that with high probability this count is at most $2$ for all relevant vertices.
		Given an integer $t'$ and a vertex $x \in V_1(P)\cup V_2(P)$ we let $Z^{x,t'}$ be the random variable that counts the number of pairs $(u,y)$, with $u \in \END_{\ell}^{t'}$ and $y \in \IN{u}{V_2(P^u)}$, such that $x \in \IN{y_+}{V_1(P^u)}$.
		For an integer $k \ge 1$, let $Z^{t'}_k$ be the random variable that counts the number of vertices $x \in V_1(P)\cup V_2(P)$ for which $Z^{x,t'} \ge k$.
		
		We show that the following two statements hold.
		\begin{enumerate}
			\item[(1)] If (\ref{eq:ENDqell}) holds for $t-1$, then with probability $1 - o\left(\frac{1}{n\log^2 n} \right)$ we have that $Z^{t-1}_3 = 0$.
			\item[(2)] If (\ref{eq:ENDqell}) holds for $t-1$ and $Z^{t-1}_3 = 0$, then with probability $1 - o\left(\frac{1}{n\log n} \right)$ we have that (\ref{eq:ENDqell}) holds for $t$.
		\end{enumerate}
		Note that since $q \ge \frac{\log^3 n}{n}$ we get that (\ref{eq:qp}) implies $t \le \frac{\log n}{4\log\log n}$.
		Assuming (1), and by taking a union bound over all $t = o(\log n)$ steps we get that
		\begin{align*}
		\Pr\left[Z^{t'}_3 = 0 ,\, \forall t' \in \{0, \ldots, t-1 \} \right] = 1 - t\cdot o\left(\tfrac{1}{n\log^2 n} \right) = 1 - o\left(\tfrac{1}{n\log n} \right).
		\end{align*}
		The induction step then follows immediately from (2).
		Hence it is left to prove (1) and (2).
		
		\paragraph*{Proof of (1).}
		Let $u \in \END_{\ell}^{t-1}$ and let $P^u \in \mathcal P_{\ell}^{t-1}$ be a path for which $u$ is its left endpoint (note that if $t=1$ then $u=u_1$ and $P^u=P$).
		Recall that $P$ contains no $\left(P, \frac{1}{8} \right)$-heavy vertices, implying that
		\begin{align*}
		\tfrac{1}{8}m \le \left|E'\left(V_2(P^u), u \right) \right| \le \tfrac{1}{4}m && \text{ and } && \tfrac{1}{8}m \le \left|E'\left(V_1(P^u), y_+ \right) \right| \le \tfrac{1}{4}m.
		\end{align*}
		Moreover, when performing online sprinkling left rotations to $P^u$ with $E'$, we have that
		\begin{align*}
		&\left|\IN{u}{V_2(P^u)} \right| \sim \Bin\left(\left|E'(V_2(P^u), u) \right|, \tfrac{100\log n}{m} \right) \\
		&\left|\IN{y_+}{V_1(P^u)} \right| \sim \Bin\left(\left|E'(V_1(P^u), y_+) \right|, \tfrac{100\log n}{m} \right),    
		\end{align*}
		for every $y \in \IN{u}{V_2(P^u)}$, and thus
		\begin{align*}
		12 \log n &\le \mathbb E \left[\left|\IN{u}{V_2(P^u)} \right| \right] \le 25 \log n, \\
		12 \log n &\le \mathbb E \left[\left|\IN{y_+}{V_1(P^u)} \right| \right] \le 25 \log n.
		\end{align*}
		By \Cref{Chernoff} with a suitable choice of $\eta$, given $u \in \END_{\ell}^{t-1}$ and $y \in \IN{u}{V_2(P^u)}$, we have
		\begin{align}
		\begin{split}
		\label{eq:Pexpsize}
		&\Pr\left[3\log n \le \left|\IN{u}{V_2(P^u)} \right| \le 50\log n \right] \ge 1 - 2n^{-\frac{27}{8}}, \\
		&\Pr\left[3\log n \le \left|\IN{y_+}{V_1(P^u)} \right| \le 50\log n \right] \ge 1 - 2n^{-\frac{27}{8}}.
		\end{split}
		\end{align}
		By taking a union bound over all $u \in \END_{\ell}^{t-1}$ we get
		\begin{align*}
		\Pr\left[\forall u \in \END_{\ell}^{t-1},\; 3\log n \le \left|\IN{u}{V_2(P^u)} \right| \le 50 \log n \right] &\ge 1 - 2\left|\END_{\ell}^{t-1} \right| \cdot n^{-\frac{27}{8}}.
		\end{align*}
		Recall that (\ref{eq:qp}) implies $t \le \frac{\log n}{4\log\log n}$.
		Since (\ref{eq:ENDqell}) holds for $t-1$ we have that
		\begin{align}
		\begin{split}
		\label{eq:p13}
		2\left|\END_{\ell}^{t-1} \right|\cdot n^{-\frac{27}{8}} &\le 2\left(50\log n \right)^{2(t-1)}\cdot n^{-\frac{27}{8}} \\
		&\le \left(\log n \right)^{2t} n^{-\frac{27}{8} + o(1)} \\
		&\le n^{-\frac{25}{8} + o(1)},
		\end{split}
		\end{align}
		which with the above implies that
		\begin{align}
		\label{eq:Pusize}
		\Pr\left[\forall u \in \END_{\ell}^{t-1},\; 3\log n \le \left|\IN{u}{V_2(P^u)} \right| \le 50\log n \right] \ge 1 - n^{-\frac{25}{8}+o(1)}.
		\end{align}
		
		Now note that given $u \in \END_{\ell}^{t-1}$ and $y \in \IN{u}{V_2(P^u)}$, for every $x \in V_1(P^u)$ we have $\Pr\left[x \in \IN{y_+}{V_1(P^u)} \right] = \frac{100\log n}{m}$.
		Moreover, given $u_1, u_2 \in \END_{\ell}^{t-1}$, $y_1 \in \IN{u_1}{V_2(P^{u_1})}$ and $y_2 \in \IN{u_2}{V_2(P^{u_2})}$ such that $(u_1, y_1) \neq (u_2,y_2)$, the events $\left\{x \in \IN{(y_1)_+}{V_1(P^{u_1})} \right\}$ and $\left\{x \in \IN{(y_2)_+}{V_1(P^{u_2})} \right\}$ are independent.
		Hence, given $x \in V_1(P) \cup V_2(P)$, by taking a union bound over all $u_1, u_2, u_3 \in \END_{\ell}^{t-1}$ and $y_i \in \IN{u_i}{V_2(P^{u_i})}$ for $i=1,2,3$, by (\ref{eq:Pusize}) and since (\ref{eq:ENDqell}) holds for $t-1$, we have
		\begin{align*}
		\Pr \left[Z^{x,t-1} \ge 3 \right] &\le \sum_{\substack{u_i \in \END_{\ell}^{t-1} \\ i=1,2,3}} \Pr\left[\forall i\in[3]\; \exists y_i \in \IN{u_i}{V_2(P^{u_i})} \text{ s.t. } x \in \bigcap_{i=1}^3 \IN{(y_i)_+}{V_1(P^{u_i})} \right] \\
		&\le \sum_{\substack{u_i \in \END_{\ell}^{t-1} \\ i=1,2,3}} \sum_{\substack{y_i \in \IN{u_i}{V_2(P^{u_i})} \\i=1,2,3}} \Pr\left[x \in \bigcap_{i=1}^3 \IN{(y_i)_+}{V_1(P^{u_i})} \right] \\
		&\le \sum_{\substack{u_i \in \END_{\ell}^{t-1} \\ i=1,2,3}} \left|\IN{u_1}{V_2(P^{u_1})} \right| \left|\IN{u_2}{V_2(P^{u_2})} \right| \left|\IN{u_3}{V_2(P^{u_3})} \right| \cdot \left(\tfrac{100\log n}{m} \right)^3 \\
		&\le \left|\END_{\ell}^{t-1} \right|^3 \left(\left(50\log n \right)^3 +  \left(\tfrac{1}{4}m \right)^3 \cdot n^{-\frac{25}{8}+o(1)} \right) \left(\tfrac{100\log n}{m} \right)^3 \\
		&\le 2\left(50\log n \right)^{6(t-1)} \left(100\log n \right)^3 \left(\tfrac{100\log n}{m} \right)^3 \\
		&\le \tfrac{64}{m^3}\left(50\log n \right)^{6t}.
		\end{align*}
		
		Lastly, we take a union bound over all $x \in V_1(P)\cup V_2(P)$.
		Recall that $t \le \frac{\log n}{4\log\log n}$ and $m \ge \frac{n}{4\log n}$, so we get
		\begin{align*}
		\Pr\left[Z^{t-1}_3 \ge 1 \right] &\le \sum_{x \in V_1\cup V_2} \Pr\left[Z^{x,t-1} \ge 3 \right] \\
		&\le \tfrac{32}{m^2} \left(50\log n \right)^{6t} \\
		&\le \tfrac{512\log^2 n}{n^2} \cdot n^{\frac{3}{4}+o(1)} \\
		&= n^{-\frac{5}{4}+o(1)} \\
		&= o\left(\tfrac{1}{n\log^2 n} \right),
		\end{align*}
		completing the proof of (1).

		\paragraph*{Proof of (2).}
		Having $Z^{t-1}_3 = 0$ is equivalent to having $Z^{x,t-1} \le 2$ for all $x \in V_1(P) \cup V_2(P)$.
		Thus, by (\ref{eq:ENDunion}) we have
		\begin{align}
		\label{eq:unionlwr}
		\left|\END_{\ell}^t \right| \ge \tfrac{1}{2} \sum_{u \in \END_{\ell}^{t-1}} \sum_{y \in \IN{u}{V_2(P^u)}} \left|\IN{y_+}{V_1(P^u)} \right|
		\end{align}
		and
		\begin{align}
		\label{eq:unionuppr}
		\left|\END_{\ell}^t \right| \le \sum_{u \in \END_{\ell}^{t-1}} \sum_{y \in \IN{u}{V_2(P^u)}} \left|\IN{y_+}{V_1(P^u)} \right|.
		\end{align}
		Hence, we wish to approximate the typical size of $\left|\IN{y_+}{V_1(P^u)} \right|$.
		
		Similarly to the argument yielding (\ref{eq:Pusize}), by (\ref{eq:Pexpsize}) we have that the probability
		\[\Pr\left[\forall u \in \END_{\ell}^{t-1} \; \forall y\in \IN{u}{V_2(P^u)} \; 3\log n \le \left|\IN{y_+}{V_1(P^u)} \right| \le 50\log n \right] \]
		is at least
		\begin{align*}
		1 - 2\sum_{u\in \END_{\ell}^{t-1}} \sum_{y\in \IN{u}{V_2(P^u)}} n^{-\frac{27}{8}} &\ge 1 - 2n^{-\frac{27}{8}} \sum_{u \in \END_{\ell}^{t-1}}\left|\IN{u}{V_2(P^u)} \right|.
		\end{align*}
		By (\ref{eq:Pusize}) and since (\ref{eq:ENDqell}) holds for $t-1$ and $t \le \frac{\log n}{4\log\log n}$, we have
		\begin{align*}
		2n^{-\frac{8}{27}}\sum_{u \in \END_{\ell}^{t-1}} \left|\IN{u}{V_2(P^u)} \right| &\le 2n^{-\frac{27}{8}}\left|\END_{\ell}^{t-1} \right| \left(50\log n + \tfrac{1}{4}m \cdot n^{-\frac{25}{8}+o(1)} \right) \\
		&\le n^{-\frac{27}{8}}\left(50\log n \right)^{2(t-1)} 100\log n \\
		&\le n^{-\frac{27}{8}} \left(50\log n \right)^{2t} \\
		&\le n^{-\frac{25}{8}+o(1)}.
		\end{align*}
		With the above this implies
		\begin{align}
		\begin{split}
		\label{eq:Puvsize}
		\Pr\left[\forall u \in \END_{\ell}^{t-1} \; \forall y\in \IN{u}{V_2(P^u)} \; 3\log n \le \left|\IN{y_+}{V_1(P^u)} \right| \le 50\log n \right] \ge 1 - n^{-\frac{25}{8}+o(1)}.
		\end{split}    
		\end{align}
		Hence, by (\ref{eq:Pusize}), (\ref{eq:unionuppr}) and (\ref{eq:Puvsize}) we get that
		\begin{align*}
		\left|\END_{\ell}^t \right| &\le \sum_{u \in \END_{\ell}^{t-1}} \sum_{y \in \IN{u}{V_2(P^u)}} \left|\IN{y_+}{V_1(P^u)} \right| \\
		&\le \sum_{u \in \END_{\ell}^{t-1}} \left|\IN{u}{V_2(P^u)} \right| \cdot 50\log n \\
		&\le \left|\END_{\ell}^{t-1} \right| \left(50\log n \right)^2
		\end{align*}
		holds with probability $1 - n^{-\frac{25}{8}+o(1)} = 1 - o\left(\frac{1}{n\log n} \right)$.
		Similarly, by (\ref{eq:Pusize}), (\ref{eq:unionlwr}) and (\ref{eq:Puvsize}) we get that
		\begin{align*}
		\left|\END_{\ell}^t \right| &\ge \tfrac{1}{2}\sum_{u \in \END_{\ell}^{t-1}} \sum_{y \in \IN{u}{V_2(P^u)}} \left|\IN{y_+}{V_1(P^u)} \right| \\
		&\ge \sum_{u \in \END_{\ell}^{t-1}} \left|\IN{u}{V_2(P^u)} \right| \cdot \tfrac{3}{2}\log n \\
		&\ge \left|\END_{\ell}^{t-1} \right| \left(\tfrac{3}{\sqrt{2}} \log n \right)^2 \\
		&\ge \left|\END_{\ell}^{t-1} \right| \left(2\log n \right)^2
		\end{align*}
		also holds with probability $1 - o\left(\frac{1}{n\log n} \right)$.
		Together with the assumption that (\ref{eq:ENDqell}) holds for $t-1$, we get that with probability $1 - o\left(\frac{1}{n\log n} \right)$ we have
		\[ \left(2\log n \right)^{2t} \le \left|\END_{\ell}^t \right| \le \left(50\log n \right)^{2t}, \]
		finishing the proof of (2).
	\end{proof}
	
	Given \Cref{cl:ENDqell}, it follows immediately that with probability $1 - o\left(\frac{1}{n\log n} \right)$ there exists $t_{\ell}$ which satisfies the conditions of the claim and for which we have
	\begin{align*}
	\left|\END_{\ell}^{t_{\ell}} \right| = \Omega\left(\tfrac{\log n}{\sqrt{q}} \right).
	\end{align*}
	Morover, $t_{\ell}$ satisfies (\ref{eq:qp}), implying $t_{\ell} \le \frac{\log n}{4\log\log n}$ and completing the proof of the lemma.
\end{proof}

\section{\texorpdfstring{Proof of \Cref{lem:moment}}{Proof of Lemma 5.2}}
\label{sec:Appendix3Moment}

Recall the bijection between balanced bipartite graphs and directed graphs (digraphs) determined by a permutation $\pi$. 
For a bipartite graph $B$, with parts $X$ and $Y$, which are two disjoint copies of $[n]$, we write $xy$ for the edge between vertex $x\in X$ and $y\in Y$ (in particular, $xy \neq yx$). Given any permutation $\pi \in S_n$, the digraph $D' = D_{\pi}:=D_{\pi}(B)$ has vertex set $[n]$ and edge set $E\left(D_\pi(B) \right) \coloneqq \left\{\overrightarrow{x \pi(y)} ~:~ xy \in E(B) , x \neq \pi(y)\right\}$.
Let $\mathcal{M}=\{M_1 \ldots M_{\delta}\}$ be a collection of $\delta$ edge-disjoint perfect matchings of $B$, where $\delta \coloneqq \delta^{\pm}\left(D' \right) = (1+o(1))np$, and $\frac{\log^{15} n}{n} \le p \le \varepsilon$ (see \Cref{lem:mindegDnp}).
For each perfect matching $M_i\in \mathcal M$, the directed graph induced by the edges of $M_i$ under the bijection, denoted by $F_i \coloneqq D_\pi(M_i)$, is a $1$-factor in $D_\pi(B)$. 
For each $M_i$, define $m_i \in S_n$ as the permutation where $m_i(x)=y$ when $xy \in M_i$.
Thus, the $1$-factor $F_i$ is spanned by directed edges of the form $\overrightarrow{x \pi(m_i(x))}$ for $x \in X$.

For a vertex $v \in [n]$ and $1 \leq i \leq \delta$ we denote by $C_{v,i}$ the (unique) cycle in $D_\pi(M_i)$ that contains $v$, and we let $c_{v,i} \coloneqq |C_{v,i}|$ denote its length. In this section, we will consider isolated vertices to be cycles of length 1.
Note that since $\pi \in S_n$ is a uniformly random permutation, both $C_{v,i}$ and $c_{v,i}$ are random variables.

We restate the lemma proved in this appendix.
\begin{replemma}{lem:moment}
	For every $v \in [n]$ we have
	\begin{align*}
	\mathbb E \left(\left(\sum_{i \in [r]} \frac{1}{c_{v,i}} \right)^3 \right) = O\left(p \log^3 n \right).
	\end{align*}
\end{replemma}

Before we prove Lemma \ref{lem:moment}, we need an auxiliary result that allows us to compute the probability that two or three cycles simultaneously have a prescribed size.

\begin{lemma}\label{lem:highmoments}
    Let $2\leq k\leq 3$ and $1\leq a_1\leq \ldots a_k \leq n/6$. Then for every $v \in [n]$ and collection of $k$ matchings $M_{i_1},\ldots, M_{i_k}$ we have that
\begin{align*}
    \PP\left(\bigcap_{j=1}^k (c_{v,i_{j}}=a_j)\right)=O(n^{-k}).
\end{align*}
\end{lemma}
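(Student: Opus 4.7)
The plan is to compute the probability directly by enumerating all possible cycles of $v$ in each $F_{i_j}$. For each $j \in [k]$, a cycle of $v$ of length $a_j$ in $D_\pi(M_{i_j})$ corresponds to a sequence $\bigl(v^{(j)}_0, v^{(j)}_1, \ldots, v^{(j)}_{a_j - 1}\bigr)$ with $v^{(j)}_0 = v$ and with distinct entries, subject to the constraints $\pi\bigl(m_{i_j}(v^{(j)}_s)\bigr) = v^{(j)}_{s+1 \bmod a_j}$ for each $s$. Given such a tuple of $k$ sequences, these constraints specify the values of $\pi$ on the preimage set $A \coloneqq \bigcup_{j,s}\{m_{i_j}(v^{(j)}_s)\}$. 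When the tuple is \emph{compatible} — meaning that the induced map on $A$ is a well-defined injection into $[n]$ — the number of $\pi \in S_n$ satisfying all the constraints is exactly $(n - |A|)!$, so
\[
\Pr\Bigl(\bigcap_{j=1}^k \{c_{v, i_j} = a_j\}\Bigr) = \sum_{\text{compatible tuples}} \frac{(n - |A|)!}{n!}.
\]

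I would then stratify the sum by the \emph{collision pattern} on the positions $\{(j, s) : 0 \le s \le a_j - 1\}$, under which two positions are declared equivalent when their preimages $m_{i_j}(v^{(j)}_s)$ coincide. Since every position $(j, a_j - 1)$ has image $v$ under $\pi$, all $k$ of them must share the common preimage $\pi^{-1}(v)$, producing one forced block of size $k$ and imposing $(k-1)$ linking relations $v^{(1)}_{a_1 - 1} = \tau_{1j}\bigl(v^{(j)}_{a_j - 1}\bigr)$ where $\tau_{1j} \coloneqq m_{i_1}^{-1} m_{i_j}$. In the \emph{generic} pattern (only this forced block, all other positions singleton), one has $|A| = A - k + 1$ where $A \coloneqq \sum_j a_j$. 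A direct count of compatible generic tuples respecting the $(k-1)$ linking relations, combined with the per-tuple factor $(n - A + k - 1)!/n!$ and the assumption $a_j \le n/6$ (hence $A \le n/2$), yields a contribution of order $n^{-k}$ to the total probability.

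For non-generic patterns, each additional collision at positions $(j, s; j', s')$ forces a preimage equation $v^{(j)}_s = \tau_{jj'}(v^{(j')}_{s'})$ together with an induced image equality $v^{(j)}_{s+1 \bmod a_j} = v^{(j')}_{s'+1 \bmod a_{j'}}$, while decreasing $|A|$ by one. Pairwise edge-disjointness of the matchings $M_{i_1}, \ldots, M_{i_k}$ implies that each $\tau_{jj'}$ is a derangement, which rules out simultaneous collisions at $(j, s; j', s')$ and $(j, s+1; j', s'+1)$, since these would jointly force $m_{i_j}(x) = m_{i_{j'}}(x)$ for some $x$. Using this non-consecutivity restriction in a dimension-counting bound on the number of compatible tuples for each collision pattern and summing the contributions over all admissible patterns yields a total non-generic contribution also of order $n^{-k}$, completing the proof. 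The main obstacle is the non-generic bookkeeping: the naive count of $r$-extra-collision patterns grows like $\Theta\bigl((a_1 \cdots a_k)^r\bigr)$, and it is only balanced against the shrinking dimension of the admissible sequence space and the growth of $(n - |A|)!$ by carefully exploiting the derangement-induced structural restrictions together with the margin $A \le n/2$ afforded by the assumption $a_j \le n/6$.
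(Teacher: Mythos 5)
Your reduction to counting compatible tuples weighted by $(n-\abs{A})!/n!$ is sound, and the generic term does come out to $\Theta(n^{-k})$ by exact cancellation of falling factorials. The gap is in the claim that dimension counting handles the non-generic patterns. Track what one extra collision costs when $A := a_1+\cdots+a_k = \Theta(n)$ (which is allowed, since $a_j$ may be as large as $n/6$): there are up to $A^2$ choices for the colliding pair of positions, the two induced equations cut the number of compatible tuples by a factor of at most $n^2$, and the drop of $\abs{A}$ by one multiplies the per-tuple probability by $n-\abs{A}=\Theta(n)$. The net factor per extra collision is therefore $\Theta(A^2/n)=\Theta(n)$, and summing over $r$ extra collisions gives $\sum_r (A^2/n)^r/r! = e^{\Theta(n)}$, not $O(1)$. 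Equivalently: once $\abs{A}=\Theta(n)$, the bound $(n-\abs{A})!/n! \le (n-\abs{A})^{-\abs{A}} = n^{-\abs{A}}e^{\Theta(A^2/n)}$ already loses a factor $e^{\Theta(n)}$ against $n^{-\abs{A}}$, and the matching gain from the restricted ranges of the free vertex choices can only be recovered by exact falling-factorial bookkeeping, which breaks down as soon as the collision constraints are bounded one at a time. The derangement observation (no two consecutive collisions) only removes a constant proportion of patterns per collision and the "margin" $A\le n/2$ actually works against you, so neither rescues the count. The deeper reason the naive count overshoots is that heavily overlapping configurations are far rarer than local constraint-counting suggests: each shared vertex forces a cascade of predecessors determined by $\tau_{jj'}$ and the earlier cycles, and global consistency of these cascades with $C_{v,i_1}$ is what kills almost all of them — a phenomenon your per-collision accounting does not see.

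The paper avoids enumeration entirely. It reveals $\pi$ by a sequential exposure that traces the path containing $v$ in each $D_\pi(M_{i_j})$ in turn, and shows that, conditionally on the entire history, closing a cycle at any prescribed step has probability at most $2/n$; this yields $O(n^{-k})$ whenever the $k$ cycles close at distinct times. The degenerate cases, where a later cycle closes no later than an earlier one and hence must reuse many already-revealed labels, force heavy vertex-overlap; these are dispatched separately, either by a direct count when the total length is $O(1)$, or by showing that heavy overlap forces $\Omega(a_k)$ vertex-disjoint cherries in $M_{i_1}\cup\cdots\cup M_{i_k}$ restricted to the cycle vertices, whose expansion makes such configurations have probability $n^{-\omega(1)}$. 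To salvage your approach you would need an analogue of this rigidity statement for configurations with many collisions; without it, the proof is incomplete.
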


\begin{proof}
Fix $v \in [n]$. For simplicity, suppose that the collection of $k$ matchings is $M_1, \ldots, M_k$ and write $C_i$ and $c_i$ instead of $C_{v,i}$ and $c_{v,i}$. Throughout the proof we will only present the arguments for the case $k=3$. The corresponding claims for $k=2$ should follow similarly. Let $\cE(a_1,a_2,a_3)= (c_1=a_1)\cap (c_2=a_2) \cap (c_3=a_3)$ be the event that the cycle $c_i$ has length $a_i$ and let $\cE(a_1,a_2)=(c_1=a_1)\cap (c_2=a_2)$ the corresponding event for two cycles. Our goal is to prove that $\PP(\cE(a_1,a_2,a_3))=O(n^{-3})$ and $\PP(\cE(a_1,a_2))$.

 Define the random subset $T:=V\left(C_i\cup C_j\cup C_k\right)\setminus \{v\}$. By a slight abuse of notation, we also consider $T$ to be a subset of $X$. Since the matchings $M_1,M_2$, and $M_3$ are edge-disjoint, we observe that in the bipartite graph $G:=M_1\cup M_2\cup M_3$, the following properties hold:
 \begin{enumerate}
     \item $e_G\left(T,\pi^{-1}(T)\right)\geq c_1+c_2+c_3$, and
     \item the subgraph of $G$ induced by $(T\cup \{v\})\cup\pi^{-1}(T\cup\{v\})$ has no isolated vertices, and
     \item $\pi^{-1}(v)\in Y \textrm{ has three neighbors in } T \cup \{v\}.$
 \end{enumerate}

 These observations are sufficient to prove the following claim:

\begin{claim}\label{clm:cyclesbig}
    $\Prob(|T|=O(1))=O(n^{-3}).$
\end{claim} 

\begin{proof}
    Conditioning on the size of $T$, we distinguish between two cases: 
    \paragraph{\bf{Case 1.}} $v\pi^{-1}(v)\notin E(G)$. In this case, there are $n-3$ possible choices for $y\in Y$, where $y=\pi^{-1}(v)$. The probability that $\pi(y)=v$ is $\frac{1}{n}$. Let $N\coloneqq N_G(y)$ denote the set of the three neighbors of $y$ in $G$. Next, choose a subset $S\subseteq X \setminus \left(\{v\}\cup N\right)$ of size $|T|-3$, and define $T:= N\cup S$. 
    
    For each $x\in T$, select a non-empty subset of neighbors $S_x\subseteq N_G(x)$, and let $T':=\cup_{x\in T} S_x\subseteq Y$ be the obtained set. Clearly, there are at most $7^T$ ways to choose $T'$ of size $|T|$ such that, if $\pi(T'\cup \{y\})=T\cup\{v\}$, then its image under $D_{\pi}$ will give us three cycles containing $v$ on $T$. The probability that $\pi(T')=T$ is at most $\frac{T!}{(n)_{T}}$. For $|T|=O(1)$ this is of order $O\left(\left(\frac{1}{n}\right)^{T}\right)$.

Therefore, the probability that the edges $E(T\cup \{v\},\pi^{-1}(T)\cup\{y\})$ map to $T\cup \{v\}$ in $D_{\pi}$, forming three cycles containing $v$, is at most 
$$(n-3)\cdot \frac{1}{n}\cdot \binom{n}{|T|-3}\cdot 7^{|T|}\cdot O(n^{-|T|})=O(n^{-3})$$
as desired. 
\paragraph{\bf{Case 2.}}  $v\pi^{-1}(v)\in E(G)$. This case is quite similar to the previous one, with a few notable differences. Here, there are only $3$ possible choices for $y=\pi^{-1}(v)$, since  $y$ must be chosen as a neighbor of $v$. After selecting $y$ and including its two additional neighbors in $T$, we need to choose a subset $S\subseteq X$ of size $|T|-2$ (instead of $|T|-3$ as in the previous case) extra vertices to complete the set $T$.

Thus, the probability that the edges $E(T\cup \{v\},\pi^{-1}(T)\cup\{y\})$ are mapped to $T\cup \{v\}$ in $D_{\pi}$, forming three cycles containing $v$, is at most 
$$3\cdot \frac{1}{n}\cdot \binom{n}{|T|-2}\cdot 7^{|T|}\cdot O(n^{-|T|})=O(n^{-3})$$
as desired. 

To complete the proof of the claim take an upper bound over all possible values of $|T|$. Since $|T|=O(1)$, the result follows.
\end{proof}

By defining $T_2:=V(C_1\cup C_2)\setminus\{v\}$, a similar argument yields the following bound on the probability that the two shortest cycles have constant length.

\begin{claim}\label{clm:big2}
    $\Prob\left(|T_2|=O(1)\right)=O(n^{-2}).$
\end{claim}

The following claim demonstrates that it is highly unlikely for the three cycles containing $v$ to have significant overlap.

\begin{claim} \label{large intersection}
Let $1\leq a_1\leq a_2\leq a_3$ be such that $\omega(1)=a_1+a_2+a_3\leq n,$. Then, 
    $$\Prob\left((|T|\leq a_1+a_2+a_3/2)\cap \mathcal E(a_1,a_2,a_3) \right)=n^{-\omega(1)}.$$
\end{claim}

\begin{proof}
Suppose that $|T|\leq a_1+a_2+a_3/2$ and $\mathcal E(a_1,a_2,a_3)$ holds. In this case, we must have $e_G(T,\pi^{-1}(T))\geq a_1+a_2+a_3.$ Let $x$ be the number of vertices of degree greater than $1$ in the subgraph of $G$ induced by $T\cup \pi^{-1}(T)$. Now, using the fact that the maximum degree in $G$ is $3$, we obtain that: 
$$a_1+a_2+a_3\leq 3x+a_1+a_2+a_3/2-x,$$
which simplifies to 
$$x\geq a_3/4.$$
Next, note that in this scenario, it is possible to greedily find a subset of $a_3/20=\omega(1)$ vertex-disjoint cherries (pairs of vertices sharing a common neighbor), with the centers of the cherries forming a subset $S\subseteq \pi^{-1}(T)$. This implies that $|N_G(S)|\geq 2|S|$.

Now, consider the number of pairs $(T,T')$ of subsets $T\subseteq X, T'\subseteq Y$ with $|T|=|T'|\leq a_1+a_2+a_3/2$ and there are at least $a_3/20$ vertices in $T'$ whose neighborhood in $T$ is of size at least $a_3/10$. This number is at most 
$$\binom{n}{a_3/20}\binom{n}{|T|-a_3/10}\leq \left(\frac{Cn}{T}\right)^{|T|-a_3/20},$$
for some constant $C>0$. 

Given $T$, there are at most $7^{|T|}$ ways to choose $T'$ (recall that the minimum degree in the graph induced by $T\cup \pi^{-1}(T)$ is at least $1$). Now, for each such pair $(T,T')$, the probability that $\pi(T')=T$ is at most $\binom{n}{|T|}^{-1}\leq (|T|/n)^{|T|}$. Therefore, we obtain that 

$$ \Prob\left((|T|\leq a_1+a_2+a_3/2)\cap \mathcal E(a_1,a_2,a_3) \right)\leq \left(\frac{Cn}{T}\right)^{|T|-a_3/20}\cdot 7^{|T|}\cdot \left(\frac{|T|}{n}\right)^{|T|}=n^{-\omega(1)}. $$

This completes the proof.
\end{proof}

If we consider only the case with two cycles $C_1,$ and $C_2$, a similar proof will give us the following.

\begin{claim}\label{cl:large2} Let $1\leq a_1\leq a_2$ be such that $\omega(1)=a_1+a_2\leq n,$. Then, 
    $$\Prob\left((|T_2|\leq a_1+a_2/2)\cap \mathcal E(a_1,a_2)\right)=n^{-\omega(1)}.$$
\end{claim}

It will be helpful to describe the following exposure procedure for revealing the values of the (random) permutation $\pi: [n] \to [n]$. This procedure is designed to track and extend the directed path in the directed graph $D_{\pi}$ that contains $v$, and ultimately reveal the entire structure of the cycles $C_1$, and then $C_2$ and $C_3$. Given a set of labeled vertices $U\subseteq Y$ and a matching $M_i$, let $M_i[X\cup U]$ be the induced subgraph of $M_i$ on the vertex set $X\cup U$ (where we view $U$ as a subset of $Y$). We define $D[M_i,U]:=D_\pi(M[X\cup U])$. We proceed by exposing the labels of the vertices of $Y$ one step at the time as follows:

\begin{enumerate}
    \item Initialize by setting $U=\emptyset$ (the set of labeled vertices).
    \item For $1\leq i \leq 3$, do the following:
    \begin{enumerate}
        \item[(i)] Let $P$ be the directed component containing the vertex $v$ in $D[M_i,U]$.
        \item[(ii)] While $P$ is not a directed cycle, do the following step: Note that the component $P$ is either a directed path or an isolated vertex. Let $x$ be the sink of $P$ and let $y \in Y$ be the vertex adjacent to $x$ in $M_i$. Since $x$ is a sink, the value of $\pi(y)$ was never revealed. We reveal $\pi(y)$, update $U:=U\cup\{y\}$ and $P$, and repeat.
        \item[(iii)] If $P$ is a directed cycle, we move to the next matching ($i:=i+1$).
    \end{enumerate}
    \item Expose the remaining labels in $[n]\setminus U$ in arbitrary order.
\end{enumerate}

This procedure incrementally reveals the permutation $\pi$ by following the edges of the matchings $M_1,M_2$, and $M_3$
  that define the cycles $C_1,C_2,$ and $C_3$, repectively, in $D_{\pi}$. The key idea is to start at the vertex $v$, trace its unique path in $D_{\pi}(M_{s})$ for $s\in \{i,j,k\}$, and extend the corresponding directed path by uncovering the connections dictated by matchings. For each cycle, we reveal the labels of vertices one step at a time until the entire cycle structure is determined. Once all relevant cycles are exposed, the remaining vertices are labeled arbitrarily. 

Let us now introduce some useful notation. Denote by $t_1,t_2,$ and $t_3$, the closing times of the cycles $C_1,C_2,$ and $C_3,$ respectively, according to the exposure procedure described above. Notice that we always have $t_1=c_1\leq t_2,t_3$, although  it is possible that $t_3<t_2$. At the closing time of a cycle $C_s$, where $s\in [3]$, if we examine the (unique) directed path containing $v$ in the image of the matching $M_s$ in $D_{\pi}$, then the following hold: 
\begin{enumerate}
    \item the path is of length $c_s-1$, and
    \item there is exactly one vertex $w$ (the starting point of this path) that remains unlabeled at this stage; that is, $\pi^{-1}(w)$ is yet undefined. 
\end{enumerate}
 Note that if $z$ is the other endpoint of the path, and we consider $z\in X$, then in order to close the cycle, its unique neighbor in $M_s$, if yet unlabeled, must be labeled $w$. At this point, if there are still $m$ unlabeled vertices in $Y$, the conditional probability that $\pi(z)=w$ is $\frac{1}{m}$.

This observation can be slightly generalized to provide an upper bound on the probability that the three cycles $C_1,C_2$ and $C_3$ have distinct closing times.

\begin{claim}\label{clm:difclosing}
   Let $a_1\leq a_2\leq a_3$ be such that $\omega(1)=a_1+a_2+a_3\leq n/2$. Let $\mathcal A=(t_1 <t_2<t_3)$ be the event that the three cycles close sequentially at distinct times. Then,
   $$\Prob\left(\mathcal E(a_1,a_2,a_3)\cap \mathcal A\right)=O(n^{-3}).$$
\end{claim}

\begin{proof}
Let $\cE^{(3)}:=\cE(a_1,a_2,a_3)\cap \cA$, $\cE^{(2)}:=\cE(a_1,a_2)\cap (t_1<t_2)$ and $\cE^{(1)}:=(c_1=a_1)$. Fix an instance of $\cE^{(3)}$. For $1\leq i\leq 3$, let $U_i$ be the ordered set of labels revealed from steps $t_{i-1}+1$ to $t_i-1$ (where $t_0=0$). Note by the description of the procedure and the observation preceding the claim, that such sets $U_1\cup U_2\cup U_3$ uniquely determines the instance of $\cE^{(3)}$.

Let $P_3$ be the path containing $v$ using the edges of $M_3$ during the procedure. Note that at each step of the process after $t_{2}$, the length of $P_3$ increases in at least one edge. Thus, conditioned on fixed $U_1$ and $U_2$, all the events leading to the possible sequence of vertices $U_{3}$ until step $t_{3}-1$ are disjoint. Moreover, as discussed earlier, the probability of closing the cycles at step $t_3$ conditioned on $U_3$ is at most $2/n$ (since, by assumption, there are always at least $n/2$ unlabeled vertices at any given stage of the procedure). Therefore, 
\begin{align*}
    \PP(\cE^{(3)}\mid U_1,U_2)=\sum_{U_3}\PP(\cE^{(3)}\mid U_1,U_2,U_3)\PP(U_3)\leq\frac{2}{n}\sum_{U_3}\PP(U_3)=\frac{2}{n}.
\end{align*}
Consequently, we obtain that
\begin{align*}
    \PP(\cE^{(3)})=\sum_{U_1,U_2}\PP(\cE(a_1,a_2,a_3)\mid U_1,U_2)\PP(U_1, U_2)=\frac{2}{n}\sum_{U_1,U_2}\PP(U_1, U_2)\leq\frac{2}{n}\PP(\cE^{(2)}).
\end{align*}
A similar argument shows that
\begin{align*}
    \PP(\cE^{(2)})\leq \frac{2}{n}\PP(\cE^{(1)}).
\end{align*}
Since $\PP(\cE^{(1)})=\frac{1}{n}$, by putting together the two inequalities, we have that $\PP(\cE^{(3)})=O(n^{-3})$. This concludes the proof.
\end{proof}

The last piece of the puzzle that we need is the following claim, which basically combines all the above: 

\begin{claim}
    Let $1 \leq a_1\leq a_2 \leq a_3\leq n/6$. Then $$\Prob\left(\mathcal E(a_1,a_2,a_3)\right)=O(n^{-3}).$$
\end{claim}

\begin{proof}
    We compute the probability by splitting into several cases depending on the times that each cycle close. Indeed, note that since $t_1\leq t_2,t_3$, the event $\cE(a_1,a_2,a_3)$ can be written as the disjoint union of the events
    \begin{align*}
        \cE(a_1,a_2,a_3)=\cE_1\cup \cE_2 \cup \cE_3 
    \end{align*}
    where we define the events $\cE_i$ by
    \begin{align*}
        \cE_1&:=\cE(a_1,a_2,a_3)\cap (t_3\leq t_2)\\
        \cE_2&:=\cE(a_1,a_2,a_3)\cap  (t_1=t_2<t_3)\\
        \cE_3&:=\cE(a_1,a_2,a_3)\cap (t_1 < t_2 < t_3).
    \end{align*}
    We bound $\PP(\cE_i)$ for $1\leq i \leq 3$.
    \paragraph{\textbf{Case 1: $\cE_1$}} Let $S=V\left(C_1\cup C_2\cup C_3\right)$. Note that in this case $|S|\leq a_1+a_2<a_1+a_2+a_3/2$. Hence, by Claim \ref{clm:cyclesbig} and \ref{large intersection}, we obtain that
    \begin{align*}
        \PP(\cE_1)=\PP(\cE_1\cap (|S|=O(1)))+\PP(\cE_1\cap (|S|=\omega(1)))=O(n^{-3})+O(n^{-\omega(1)})=O(n^{-3}).
    \end{align*}

    \paragraph{\textbf{Case 2: $\cE_2$}} Let $S=V\left(C_1\cup C_2\right)$. Note that in this case $|S|=a_1\leq a_1+a_2/2$. Let $\tilde{\cE}_2$ be the event $\cE(a_1,a_2)\cap (t_1=t_2)$. Thus, by Claim \ref{clm:big2} and \ref{cl:large2}, we obtain that
    \begin{align*}
        \PP(\tilde{\cE_2})=\PP(\tilde{\cE_2}\cap (|S|=O(1)))+\PP(\tilde{\cE_2}\cap (|S|=\omega(1)))=O(n^{-2}).
    \end{align*}
    Let $U$ be the ordered set of labels revealed from steps $t_2$ to step $t_3-1$ conditioned on a instance $\omega \in \tilde{\cE_2}$. By a similar argument as in Claim \ref{clm:difclosing}, we obtain that
    \begin{align*}
        \PP(\cE_2\mid \omega)=\sum_{U}\PP(\cE_2\mid \omega, U)\PP(U)\leq \frac{2}{n}. 
    \end{align*}
    Therefore,
    \begin{align*}
        \PP(\cE_2)=\sum_{\omega \in \tilde{\cE_2}}\PP(\cE_2\mid \omega)\PP(\omega)\leq \frac{2}{n}\PP(\tilde{\cE_2})=O(n^{-3}).
    \end{align*}

    \paragraph{\textbf{Case 3: $\cE_3$}}
    We obtain that $\PP(\cE_3)=O(n^{-3})$ immediately from Claim \ref{clm:difclosing}.

    By putting all the cases together, we obtain that 
    \begin{align*}
    \PP(\cE(a_1,a_2,a_3))=O(n^{-3})
    \end{align*}
    as desired.
\end{proof}

We note that a similar bound can be obtained for two cycles. More precisely, one can prove that $\PP(\cE(a_1,a_2))=O(n^{-2})$. This finishes the proof of the lemma.
\end{proof}

We are now able to prove Lemma \ref{lem:moment}.

\begin{proof}[Proof of Lemma \ref{lem:moment}]
Let $v \in [n]$. For simplicity, we will write $C_i$ and $c_i$ instead of $C_{v,i}$ and $c_{v,i}$, respectively. We begin with an auxiliary result.

\begin{claim}\label{clm:expect}
The following three bound holds:
\begin{enumerate}
		\item For every $i \in [\delta]$, 
		$\mathbb E\left(\frac{1}{c^3_{i}} \right) = O\left(\frac{1}{n} \right)$.
		\item For every distinct $i,j \in [\delta]$,
		$\mathbb E \left(\frac{1}{c^2_{i} c_{j}} \right) = O\left(\frac{\log^2 n}{n^2} \right).$
		
		\item For every distinct $i,j,k \in [\delta]$,
		$\mathbb E \left(\frac{1}{c_{i} c_{j} c_{k}} \right) = O \left(\frac{\log^3 n}{n^3} \right)$.
\end{enumerate}
\end{claim}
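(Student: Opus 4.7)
The plan is to bound each of the three expectations by splitting the sum over cycle lengths at the threshold $n/6$ from Lemma~\ref{lem:highmoments}, and combining that lemma's joint bounds with an elementary marginal bound on a single cycle length.

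For item~$(1)$, I will first record the marginal bound $\Pr(c_{v,i}=a)=O(1/n)$ for every $1\le a\le n$. Since $\pi\in S_n$ is uniform and $m_i$ is the fixed permutation associated with $M_i$, the composition $\pi\circ m_i$ is again uniform on $S_n$, and the cycle containing a fixed vertex in a uniformly random permutation of $[n]$ is classically known to have length uniformly distributed on $[n]$; deleting loops only changes the implicit constant. Given this,
\[
\mathbb{E}\!\left(\tfrac{1}{c_{v,i}^3}\right)=\sum_{a=1}^{n}\Pr(c_{v,i}=a)\,a^{-3}\le \frac{C}{n}\sum_{a=1}^{n}\frac{1}{a^3}=O\!\left(\tfrac{1}{n}\right),
\]
since $\sum_{a\ge 1}a^{-3}$ converges.

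For item~$(2)$, I will write $\mathbb{E}(1/(c_{v,i}^2 c_{v,j}))=\sum_{a,b}\Pr(c_{v,i}=a,c_{v,j}=b)/(a^2 b)$ and split according to whether both $a,b\le n/6$. In this main range Lemma~\ref{lem:highmoments} with $k=2$ gives $\Pr(c_{v,i}=a,c_{v,j}=b)=O(1/n^2)$, producing a contribution of $O(1/n^2)\cdot(\sum_a 1/a^2)(\sum_b 1/b)=O(\log n/n^2)$. When $a>n/6$ we have $a^{-2}=O(1/n^2)$, and summing the marginal from item~$(1)$ yields $\sum_b\Pr(c_{v,j}=b)/b=O(\log n/n)$, so this tail contributes at most $O(\log n/n^3)$; the case $b>n/6$ is symmetric. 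Combining everything gives the claimed $O(\log^2 n/n^2)$ (in fact a slightly better $O(\log n/n^2)$).

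For item~$(3)$ the same scheme applies. On the main range $a,b,c\le n/6$ Lemma~\ref{lem:highmoments} with $k=3$ gives $\Pr=O(1/n^3)$, so that part contributes $O(1/n^3)\cdot(\sum 1/a)(\sum 1/b)(\sum 1/c)=O(\log^3 n/n^3)$. If one coordinate (say $a$) exceeds $n/6$, one trades its $1/a$ factor for $O(1/n)$ and applies Lemma~\ref{lem:highmoments} with $k=2$ to the remaining pair, producing a strictly smaller term of order $O(\log^2 n/n^3)$; the other two large-coordinate cases are symmetric. The main nuisance here—rather than a real obstacle—is that Lemma~\ref{lem:highmoments} is stated only for $a_i\le n/6$, which is exactly why the marginal bound from item~$(1)$ is needed to control the giant-cycle tails. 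Once that split is in place, the whole argument reduces to summing geometric/harmonic series.
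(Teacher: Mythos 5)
Your proposal is correct and follows essentially the same route as the paper: the uniform marginal $\Pr(c_{v,i}=a)=1/n$ for item (1), Lemma~\ref{lem:highmoments} on the range where all lengths are at most $n/6$, and the marginal bound to absorb the large-length tails (your retention of the $1/a^2$ factor in item (2) even gives a marginally sharper $O(\log n/n^2)$). One small point to tidy in item (3): your case analysis explicitly treats only ``exactly one coordinate exceeds $n/6$,'' but applying Lemma~\ref{lem:highmoments} with $k=2$ to the remaining pair requires both of those lengths to be at most $n/6$; when two or three coordinates exceed $n/6$ you should instead trade each large factor for $O(1/n)$ and use the $k=2$ bound or the single-cycle marginal on what remains --- exactly as the paper does in its sum $\Sigma_3$ --- which yields terms of order $O(\log n/n^3)$ and leaves the final bound unaffected.
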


\begin{proof}
We start by showing statement (1). Note that for a single cycle, the probability $\PP(c_i=t)=1/n$. Hence,
\begin{align*}
    \EE\left(\frac{1}{c_i^3}\right)=\sum_{a_i\in [n]}\frac{1}{a_i^3}\PP(c_i=a_i)=\frac{1}{n}\sum_{a_i\in [n]}\frac{1}{a_i^3}=O\left(\frac{1}{n}\right).
\end{align*}

To prove statement (2) and (3) we use Lemma \ref{lem:highmoments}. Indeed, for statement (2) we have that
\begin{align*}
    \EE\left(\frac{1}{c_i^2c_j}\right)&=\sum_{a_i,a_j \in [n]}\frac{1}{a_i^2a_j}\PP\left((c_i=a_i)\cap(c_j=a_j)\right)\\ &\leq \sum_{a_i,a_j \in [n]}\frac{1}{a_ia_j}\PP\left((c_i=a_i)\cap(c_j=a_j)\right)\leq \sum_{a_i\leq a_j}\frac{2}{a_ia_j}\PP\left((c_i=a_i)\cap(c_j=a_j)\right)\\
    &\leq\sum_{a_i\leq a_j\leq n/4}\frac{2}{a_ia_j}\PP\left((c_i=a_i)\cap(c_j=a_j)\right)+\sum_{a_i\in [n]}\frac{8}{na_i}\PP\left(c_i=a_i\right)\\
    &=O(n^{-2})\left(\sum_{a_i\leq a_j\leq n/4}\frac{1}{a_ia_j}+\sum_{a_i\in [n]}\frac{1}{a_i}\right)=O\left(\frac{\log^2 n}{n^2}\right),
\end{align*}
where we use Lemma \ref{lem:highmoments} for two cycles.

A similar computation holds for statement (3):
\begin{align*}
    \EE\left(\frac{1}{c_ic_jc_k}\right)&=\sum_{a_i,a_j,a_k \in [n]}\frac{1}{a_ia_ja_k}\PP\left((c_i=a_i)\cap(c_j=a_j)\cap(c_k=a_k)\right)\\
    &\leq \sum_{a_i\leq a_j\leq a_k}\frac{6}{a_ia_ja_k}\PP\left((c_i=a_i)\cap(c_j=a_j)\cap(c_k=a_k)\right)
    =\Sigma_1+\Sigma_2+\Sigma_3,
\end{align*}
where
\begin{align*}
    &\Sigma_1:=\sum_{a_i\leq a_j\leq a_k\leq n/6}\frac{6}{a_ia_ja_k}\PP\left((c_i=a_i)\cap(c_j=a_j)\cap(c_k=a_k)\right)\\
    &\Sigma_2:= \sum_{\substack{a_i\leq a_j\leq n/6\\ a_k>n/6}}\frac{6}{a_ia_ja_k}\PP\left((c_i=a_i)\cap(c_j=a_j)\cap(c_k=a_k)\right), \textrm{ and }\\
    &\Sigma_3:=\sum_{\substack{a_i\in [n]\\ n/6<a_j\leq a_k}}\frac{6}{a_ia_ja_k}\PP\left((c_i=a_i)\cap(c_j=a_j)\cap(c_k=a_k)\right)
\end{align*}

We bound each of the $\Sigma_i$s separately as follows: 

By Lemma \ref{lem:highmoments} applied for three cycles, we know that for $a_1\leq a_2\leq a_3\leq n/6$, the probability $\PP((c_i=a_i)\cap (c_j=a_j)\cap (c_k=a_k))= O(n^{-3})$. Therefore,
\begin{align*}
    \Sigma_1=O\left(n^{-3}\sum_{a_1\leq a_2\leq a_3\leq n/6}\frac{1}{a_1a_2a_3}\right)=O\left(\frac{\log^3 n}{n^3}\right)
\end{align*}
Similarly, Lemma \ref{lem:highmoments} applied for two cycles gives that if $a_1\leq a_2\leq n/6$, then $\PP((c_i=a_i)\cap(c_j=a_j))=O(n^{-2})$. Hence
\begin{align*}
    \Sigma_2\leq \sum_{a_i\leq a_j\leq n/6}\frac{36}{na_ia_j}\PP\left((c_i=a_i)\cap(c_j=a_j)\right)= O\left(n^{-3}\sum_{a_i\leq a_j\leq n/6}\frac{1}{a_ia_j}\right)=O\left(\frac{\log^2 n}{n^3}\right)
\end{align*}
Finally, to bound $\Sigma_3$, we just need to use that $\PP(c_i=a_i)=1/n$. Therefore,
\begin{align*}
\Sigma_3 \leq \sum_{a_i\in [n]}\frac{216}{n^2a_i}\PP\left(c_i=a_i\right)=\frac{216}{n^3}\sum_{a_i\in [n]}\frac{1}{a_i}=O\left(\frac{\log n}{n^3}\right)
\end{align*}
By summing everything together, we obtain 
\begin{align*}
    \EE\left(\frac{1}{c_ic_jc_k}\right)=O\left(\frac{\log^3 n}{n^3}\right).
\end{align*}
This concludes the proof of the claim.
\end{proof}

We finish the proof of the proposition by applying Claim \ref{clm:expect} after expanding the third moment
\begin{align*}
	\mathbb E \left(\left(\sum_{i \in [\delta]}\frac{1}{c_{i}} \right)^3 \right) &= \sum_{i \in [\delta]} \mathbb E\left(\frac{1}{c_{i}^3} \right) + 3\sum_{\substack{i, j \in [\delta] \\ i \neq j}} \mathbb E \left(\frac{1}{c_{i}^2 c_{j}} \right) + 6\sum_{\substack{i,j,k \in [\delta] \\ i<j<k}} \mathbb E \left(\frac{1}{c_{i} c_{j} c_{k}} \right)\\
  &=\delta \cdot O\left(\frac{1}{n} \right) + \delta^2 \cdot O\left(\frac{\log^2 n}{n^2} \right) + \delta^3 \cdot O\left(\frac{\log^3 n}{n^3} \right) = O\left(p \log^3 n \right),
 \end{align*}
 as desired.
\end{proof}
	
\end{document}